\newtheorem{theorem}{Theorem}[section]
\newtheorem{lemma}[theorem]{Lemma}
\newtheorem{prop}[theorem]{Proposition}
\newtheorem{cor}[theorem]{Corollary}
\theoremstyle{definition}
\newtheorem{defn}[theorem]{Definition}
\theoremstyle{remark}
\newtheorem{rem}[theorem]{Remark}
\newcommand{\A}[2]{\mathbb{A}_{#1}^{#2}}
\newcommand{\Q}{\mathbb{Q}}
\newcommand{\Z}{\mathbb{Z}}
\newcommand{\C}{\mathbb{C}}
\newcommand{\R}{\mathbb{R}}
\newcommand{\GL}{\operatorname{GL}}
\newcommand{\Ga}{\mathbb{G}_a}
\newcommand{\Gm}{\mathbb{G}_m}
\newcommand{\pp}{\mathfrak{p}}
\DeclareMathOperator{\Hom}{Hom}
\DeclareMathOperator{\Spec}{Spec}
\DeclareMathOperator{\Sp}{Sp}
\DeclareMathOperator{\Spf}{Spf}
\DeclareMathOperator{\Res}{Res}
\DeclareMathOperator{\Sym}{Sym}
\DeclareMathOperator{\Fil}{Fil}
\DeclareMathOperator{\Gr}{Gr}
\DeclareMathOperator{\Ha}{Ha}
\DeclareMathOperator{\Hdg}{Hdg}
\DeclareMathOperator{\can}{can}
\DeclareMathOperator{\id}{id}
\DeclareMathOperator{\dR}{dR}
\DeclareMathOperator{\Tor}{Tor}
\DeclareMathOperator{\Tr}{Tr}
\renewcommand{\le}{\leqslant}
\renewcommand{\leq}{\leqslant}
\renewcommand{\ge}{\geqslant}
\renewcommand{\geq}{\geqslant}
\title{{$p$}-adic Asai and twisted triple product {$L$}-functions for finite slope families}
\author{Ananyo Kazi}
\author{David Loeffler}
\address{Faculty of Mathematics and Computer Science, UniDistance Suisse, Schinerstrasse 18, CH-3900 Brig, Switzerland}
\email{ananyo.kazi@unidistance.ch ; david.loeffler@unidistance.ch}
\urladdr{\url{https://orcid.org/0000-0002-7609-968X} ; \url{https://orcid.org/0000-0001-9069-1877}}
\thanks{Supported by the European Research Council through the Horizon 2020 Excellent Science programme (Consolidator Grant ``Shimura varieties and the BSD conjecture'', grant ID 101001051).}
\date{Compiled \today}
\begin{document}

\begin{abstract}
 We define a two-variable $p$-adic Asai $L$-function for a finite-slope family of Hilbert modular forms over a real quadratic field (with one component of the weight, and the cyclotomic twist variable, varying independently); and a two-variable ``twisted triple product'' $L$-function, interpolating the central $L$-value of the tensor product of such a family with a family of elliptic modular forms. The former construction generalizes a construction due to Grossi, Zerbes and the second author for ordinary families; the latter is a counterpart of the twisted triple product $L$-function of \cite{kazi2024twistedtripleproductpadic}, but differs in that it interpolates classical $L$-values in a different range of weights, in which the dominant weight comes from the Hilbert modular form. Our construction relies on a ``nearly-overconvergent'' version of higher Coleman theory for Hilbert modular surfaces.
\end{abstract}

\maketitle

\tableofcontents

\newcommand{\Sh}[1]{\mathcal{#1}}
\newcommand{\V}[2]{\mathbb{V}_0(#1,#2)}
\newcommand{\Hs}{\Sh{H}^{\sharp}_{\Sh{A},1}}
\newcommand{\Os}{\omega^{\sharp}_{\Sh{A},1}}

\section{Introduction}

 In this paper, we study the $p$-adic interpolation of critical values of the \emph{Asai}, or \emph{twisted tensor product}, $L$-function associated to Hilbert modular eigenforms over a real quadratic field $F$; and the \emph{twisted triple product} associated to the product of a quadratic Hilbert modular form and an elliptic modular form.

 \subsubsection*{Asai case} We build upon an earlier work of Grossi, Zerbes, and the second author \cite{grossi2024padicasailfunctionsquadratic}, which used higher Hida theory to define $p$-adic Asai $L$-functions for Hilbert modular forms of weight $k_1 > k_2$, assuming $p = \pp_1 \pp_2$ is split in $F$ and  our Hilbert eigenform is ordinary (slope 0) at $\pp_1$. Here the use of higher Hida theory -- interpolating not only the sections of automorphic vector bundles, but also their cohomology in positive degrees -- is unavoidable, since the underlying integral formula for the Asai $L$-function involves a Hilbert modular form which is holomorphic at one infinite place but anti-holomorphic at the other, hence contributing to $H^1$ of an automorphic vector bundle but not to $H^0$.
 
 In this paper, we treat the more general ``finite-slope'' case, where the eigenform is not assumed to be ordinary at $\pp_1$ but may have positive slope. This requires the use of higher Coleman theory, in place of higher Hida theory; but an immediate difficulty arises, since higher Coleman theory incorporates an ``overconvergence'' condition (as in traditional Coleman theory), but the family of $\GL_2$ Eisenstein series used in \cite{grossi2024padicasailfunctionsquadratic} is not overconvergent. Rather, it is a \emph{nearly-overconvergent} family in the sense of \cite{andreatta2021triple}. Hence the main technical input in this paper is to develop a ``nearly-version'' of higher Coleman theory in this situation, building on the theory developed in \cite{andreatta2021triple} and \cite{kazi2024twistedtripleproductpadic}. This allows us to define spaces of nearly-overconvergent cohomology for Hilbert modular surfaces, and to construct elements of these spaces as pushforwards of families of nearly-overconvergent modular forms. 
 
 Our main result in this setting is as follows. Let $\Sh{W}$ denote the weight space parametrising characters of $\Z_p^\times$, and let $\underline{\Pi}$ denote a one-parameter family of finite-slope Hecke eigensystems of weight $(k_1 + 2\kappa_{\Sh{U}}, k_2)$ for some fixed $(k_1, k_2)$, and $\kappa_{\Sh{U}}$ the universal character associated to an an affinoid $\Sh{U} \subset \Sh{W}$ around 0. Then we define a meromorphic function $\Sh{L}_{p, \mathrm{As}}^{\mathrm{imp}}(\Pi)(\kappa, \sigma)$ on $\Sh{U} \times \Sh{W}$ (meromorphic in $\kappa$, but holomorphic in $\sigma$ for each fixed $\kappa$), with the following interpolation property: its value at $(\kappa, \sigma) = (a, s) \in \Z^2$, for $a$ such that the specialisation $\Pi[a]$ of $\underline{\Pi}$ at $a$ is classical (which holds for all sufficiently large $a$) and any $s$ with $1 - \frac{k_1 + 2a-k_2}{2} \le s \le \frac{k_1 + 2a-k_2}{2}$, interpolates (up to the usual correction factors) the value at $s$ of the Asai $L$-function of $\Pi[a]$. See Theorem \ref{thm:main} for a precise statement.

 \subsubsection*{Triple-product case} For our second main result, we consider a quadratic Hilbert family $\underline{\Pi}$ as above, and $\underline{\Sigma}$ be a Coleman family of cuspidal eigenforms for $\GL_{2,\Q}$ of weight $\ell + 2b$, with $\ell$ fixed and $b$ varying over an affinoid disc $\Sh{U}' \subset \Sh{W}$ around 0. We assume that for some (or, equivalently, all) classical specialisations $\Pi[a]$ and $\Sigma[b]$ of $\underline{\Pi}$ and $\underline{\Sigma}$, the following conditions hold: the restriction to $\Q$ of the central character of $\Pi[a]$ is the inverse of the central character of $\Sigma[b]$, so the product $\Pi[a] \times \Sigma[b]$ is self-dual; and the local root numbers of $\Pi[a] \times \Sigma[b]$ satisfy the condition of \cite[Theorem 3.2]{michele} at all finite places (which is automatic if the tame level of the $\GL_{2,\Q}$ form is coprime to that of the $\GL_{2,F}$ form and the discriminant). In this setting, we show that there is a rigid-analytic function on $\Sh{U} \times \Sh{U}'$ whose value at $(a, b) \in \Z^2$ such that $k_1 + 2a \ge k_2 + \ell + 2b$ interpolates the square root of the central value of the degree 8 $L$-function $L(\Pi[a] \times \Sigma[b], s)$. Note that this range (in which the first component of the weight of $\underline{\Pi}$ is dominant) is disjoint from the $\GL_{2,\Q}$-dominant range considered in \cite{kazi2024twistedtripleproductpadic}.

 % We expect the methods introduced here will have other applications besides the Asai $L$-function. In particular, it should be possible to define ``twisted triple product'' $L$-functions for $\GL_2 / F \times \GL_2 /\Q$ interpolating $L$-values where the $\GL_2 / F$ factor is dominant (complementing the $\GL_2 / \Q$-dominant case studied in \cite{kazi2024twistedtripleproductpadic}), or for $\GL_2$ over totally-real cubic extensions. We hope to return to this topic in future works.

 \subsubsection*{Remark} In parallel with the writing of this paper, A.~Graham and R.~Rockwood have independently developed in \cite{grahamrockwood24} a version of ``nearly higher Coleman theory'', in the setting of Siegel threefolds rather than the Hilbert modular surfaces we consider here. Our methods are rather different from theirs, using the formalism of vector bundles with marked sections used in \cite{andreatta2021triple}.
 
\section{General conventions}

 Let $F$ be a real quadratic field, and $p$ a rational prime which is split in $F$. We fix an isomorphism $\bar{\Q}_p \simeq \C$. Let $p = \pp_1\pp_2$, and let $\sigma_i$ be the complex embedding inducing the $\pp_i$-adic valuation under our chosen isomorphism. Let $G = \Res_{F/\Q}\GL_{2,F}$, and let $H$ be the subgroup $\GL_{2,\Q}$, with $\iota: H \to G$ the natural embedding. This factors through the intermediate group $G^* = \{g \in G \,|\, \det(g) \in \Gm \subset \Res_{F/\Q} \Gm\}$.

\section{$\pp_1$-overconvergent cohomology at prime-to-$p$ level}\label{S2}

 \subsection{Setup: Shimura varieties and models}

  \subsubsection{Shimura varieties over $\Q$} Let $K^{(p)}$ be a (sufficiently small) open compact subgroup of $G(\A{f}{(p)})$, and write $K = K^{(p)} \cdot G(\Z_p)$. We write $X_{\Q}$ for a smooth projective toroidal compactification of the Shimura variety for $G$ of level $K$. Note that the Shimura variety $X_{\Q}$ is not geometrically connected, and its geometrically connected components are parametrised by the narrow ray class group $\mathrm{cl}^+_F(K) := F^{\times,+}\backslash \A{F,f}{\times} /\det K$.

  \subsubsection{Moduli spaces} 

   For a fractional ideal $\mathfrak{c}$ in $F$, let $M^{\mathfrak{c}} / \Spec{\Z_{(p)}}$ denote the compactified Hilbert--Blumenthal moduli scheme parametrising tuples $(A, \iota, \lambda, \alpha_{K^{(p)}})$ away from the boundary. Here $A$ is an abelian variety of dimension 2 with a real multiplication structure given by $\iota$, $\lambda$
   % \colon (\Hom^{\text{sym}}_{\Sh{O}_F}(A, A^{\vee}), \mathrm{Pol}_{\Sh{O}_F}(A)) \xrightarrow{\simeq} (\mathfrak{c}, \mathfrak{c}^{+})$ 
   is a $\mathfrak{{c}}$-polarisation in the sense of \cite{DelignePappas}, and $\alpha_{K^{(p)}}$ is a $K^{(p)}$-level structure. This scheme is smooth and projective over $\Z_{(p)}$, and its generic fibre is a Shimura variety for $G^*$ when $\mathfrak{c} = \mathfrak{d}^{-1}$ where $\mathfrak{d}$ is the different ideal of $F/\Q$. There is a natural action of $\Sh{O}_F^{\times, +}$ on $M^{\mathfrak{c}}$ given by scaling the polarisation, which factors through the quotient $\Delta_K := \Sh{O}_F^{\times, +}/(K \cap \Sh{O}_F^{\times})^2$; and the scheme
   \[ \bigsqcup_{[\mathfrak{c}] \in \mathrm{cl}^+_F(K)} M^{\mathfrak{c}} / \Delta_K\]
   is a smooth projective $\Z_{(p)}$-scheme whose generic fibre is canonically identified with $X_{\Q}$, so we denote it simply by $X$.

   \begin{rem}
    More canonically, we can avoid choosing representatives of the class group by considering the (non-finite-type) variety $M \coloneqq \bigsqcup_{\mathfrak{c}} M^{\mathfrak{c}}$, where $\mathfrak{c}$ varies over all fractional ideals coprime to $p$. This has a natural action of the group $F^{\times,(p)}_+$ of elements $x \in F^{\times, +}$ which are units at the primes above $p$, given by the isomorphisms
   \[ 
    M^{\mathfrak{c}} \xrightarrow{} M^{x\mathfrak{c}}, \quad (A,\iota,\lambda,\alpha_{K^{(p)}}) \mapsto (A,\iota,x\lambda,\alpha_{K^{(p)}}),
   \]
   and we have $X = M / F^{\times,(p)}_+$.
   \end{rem}
   
   %This scheme is not smooth, but is normal and a relative local complete intersection, with smooth fibre over $\Q$ \cite{Pappas}. There is a natural action of the group $\Sh{O}_{F}^{\times, +}$ of totally positive units of $F$ on $M_0^{\mathfrak{c}}(\pp_1)_{\Q}$ given by $\epsilon \cdot (A, \iota, \lambda, \alpha_{K^{(p)}}, H) = (A, \iota, \epsilon\lambda, \alpha_{K^{(p)}}, H)$. This action factors through the finite quotient $\Delta_K := \Sh{O}_F^{\times, +}/(K \cap \Sh{O}_F^{\times})^2$. 

\subsection{Automorphic sheaves}

%Let $L = F(\sqrt{\epsilon} \,|\, \epsilon \in \Sh{O}_F^{\times, +})$, and $\Sh{O}_L$ be its ring of integers. Let $R$ be the ring of integers of a local field that is an $\Sh{O}_L \otimes \Z_p$-algebra, and consider the base change of the integral models described above to $R$. 
 
Let $\Sh{A} \to {M}$ be the semiabelian scheme extending the universal abelian surface with real multiplication by $\Sh{O}_F$. Let $e \colon {M} \to \Sh{A}$ be the unit section. Let $\underline{\omega}_{\Sh{A}} := e^*\Omega^1_{\Sh{A}/M}$. This is an $\Sh{O}_F \otimes \Sh{O}_M$-line bundle. Let $\Sh{H}_{\Sh{A}}$ be the canonical extension of the relative de Rham sheaf $\Sh{H}^1_{\text{dR}}(\Sh{A}/M)$ over the un-compactified moduli scheme. This is an $\Sh{O}_F \otimes \Sh{O}_M$-module that is locally free of rank 2. There is a short exact sequence known as the Hodge filtration
\[
0 \to \underline{\omega}_{\Sh{A}} \to \Sh{H}_{\Sh{A}} \to \underline{\omega}_{\Sh{A}^{\vee}}^{\vee} \to 0.
\]

Over the connected component $M^{\mathfrak{c}}$ of $M$ that classifies $\mathfrak{c}$-polarised abelian varieties, we have a canonical isomorphism $\underline{\omega}_{\Sh{A}^{\vee}}^{\vee} \simeq \underline{\omega}^{\vee}_{\Sh{A}} \otimes \mathfrak{c}$ using the $\mathfrak{c}$-polarisation \cite[{(1.0.15)}]{KatzNicholasM1978pLfC}. This gives a canonical identification $\wedge^2_{\Sh{O}_F}\Sh{H}_{\Sh{A}} = \underline{\omega}_{\Sh{A}}\otimes_{\Sh{O}_F \otimes \Sh{O}_{M^{\mathfrak{c}}}} (\underline{\omega}_{\Sh{A}}^{\vee}\otimes \mathfrak{c}) = \Sh{O}_{M^{\mathfrak{c}}} \otimes \mathfrak{cd}^{-1}$, where $\mathfrak{d}$ is the different ideal of $F/\Q$.

Let us now suppose $R$ is a $\Sh{O}_{F, (p)}$-algebra. By abuse of notation, we use the same letters $X, M$ etc for the base-extensions of the above $\Z_{(p)}$-schemes to $R$. The canonical splitting of $\Sh{O}_F \otimes R \simeq R \times R$, where $\Sh{O}_F$ acts on the first component via the embedding $\sigma_1$ and on the second component via $\sigma_2$, induces a decomposition of $\underline{\omega}_{\Sh{A}}$ and $\Sh{H}_{\Sh{A}}$ as
\[
\underline{\omega}_{\Sh{A}} = \omega_{\Sh{A},1} \oplus \omega_{\Sh{A},2}, \quad \Sh{H}_{\Sh{A}} = \Sh{H}_{\Sh{A},1} \oplus \Sh{H}_{\Sh{A},2}.
\]

For $(\underline{k};w) = (k_1, k_2;w) \in \Z^2\times \Z$ such that $k_1 \equiv k_2 \equiv w \text{ mod } 2$, let 
\[
\underline{\omega}^{(\underline{k};w)} := \bigotimes_{1\leq i\leq 2} \left((\wedge^2 \Sh{H}^1_{\Sh{A},i})^{\frac{w-k_i}{2}} \otimes \omega_{\Sh{A},i}^{k_i}\right)
\]

\subsubsection{Descent data}

For any $x \in F^{\times, (p)}_+$ the isomorphism $[x] \colon M^{\mathfrak{c}} \to M^{x\mathfrak{c}}$ sending $(A, \iota, \lambda, \alpha_{K'}) \mapsto (A, \iota, x\lambda, \alpha_{K'})$ induces an isomorphism $[x]^*(\wedge^2_{\Sh{O}_F}\Sh{H}_{\Sh{A}}) \to \wedge^2_{\Sh{O}_F} \Sh{H}_{\Sh{A}}$ such that the following diagram commutes.
\[\begin{tikzcd}
	{[x]^*(\wedge^2_{\Sh{O}_F}\Sh{H}_{\Sh{A}})} & {\wedge^2_{\Sh{O}_F}\Sh{H}_{\Sh{A}}} \\
	{\Sh{O}_{M^{\mathfrak{c}}}\otimes x\mathfrak{cd}^{-1}} & {\Sh{O}_{M^{\mathfrak{c}}}\otimes \mathfrak{cd}^{-1}}
	\arrow[from=1-1, to=1-2]
	\arrow["\simeq", from=1-1, to=2-1]
	\arrow["\simeq", from=1-2, to=2-2]
	\arrow["{x^{-1}}", from=2-1, to=2-2]
\end{tikzcd}\]

The isomorphism $\Sh{O}_{M^{\mathfrak{c}}}\otimes \mathfrak{cd}^{-1} \simeq \wedge^2_{\Sh{O}_F} \Sh{H}_{\Sh{A}}$ defines a canonical generator $\eta_{\can,\lambda}$ of $\wedge^2_{\Sh{O}_F} \Sh{H}_{\Sh{A}}$ given by the image of $1$. Thus we get a natural isomorphism of sheaves over $M$
\begin{align*}
\underline{\omega}^{(\underline{k};w)} &\xrightarrow{\simeq} \underline{\omega}^{\underline{k}} := \bigotimes_{1\leq i\leq 2} \omega_{\Sh{A},i}^{k_i} \\
f &\mapsto \left[\tilde{f}\colon (A,\iota, \lambda, \alpha_{K'}, \omega) \mapsto f(A,\iota, \lambda, \alpha_{K'}, \omega, \eta_{\can,\lambda})\right].
\end{align*}
Now since $\eta_{\can, x\lambda} = x^{-1}\eta_{\can,\lambda}$, we have $\tilde{f}(A,\iota, x\lambda, \alpha_{K'},\omega) = \prod_{1\leq i\leq 2}x^{\frac{w-k_i}{2}}\tilde{f}(A, \iota, \lambda, \alpha_{K'}, \omega)$. Therefore $\underline{\omega}^{(\underline{k};w)}$ as a sheaf over $X$ can also be viewed as a descent of $\underline{\omega}^{\underline{k}}$ with respect to this descent datum. 

\begin{comment}
\begin{rem} We will use this approach while defining interpolation sheaves over strict neighbourhoods of the $\pp_1$-ordinary locus, unlike in \cite{grossi2024padicasailfunctionsquadratic}. This is due to the fact that unlike in the ordinary case $\wedge^2_{\Sh{O}_F}\Sh{H}_{\Sh{A}}$ will not have a natural reduction of torsors to the coefficient group $\Res_{\Sh{O}_F/\Z}(\Z_p)$ here, and hence it will not be possible to define $(\wedge^2_{\Sh{O}_F}\Sh{H}_{\Sh{A}})^{\kappa}$ for a (universal) family of weights $\kappa$ using the approach of loc. cit.\end{rem}
\end{comment}

\subsection{Hodge height and Hasse invariants on $X$}

We now suppose that $R$ is the ring of integers of a finite extension of $\Q_p$. 
%By abuse of notation, we use the same letters ${M}, X$ etc for the base-extensions of the above schemes along $\Z_{(p)} \to R$. 
Let $\wp \subset R$ be the maximal ideal of $R$. Let $\mathfrak{X}, \mathfrak{M}$ be the $\wp$-adic completion of $X, M$, and let $\Sh{X}, \Sh{M}$ be the associated rigid analytic spaces. We use this general convention of letter styles for denoting formal schemes, and rigid spaces henceforth. %We similarly have $\mathfrak{X}_0(\pp_1), \mathfrak{M}_0(\pp_1)$, which are $\wp$-adic formal schemes, and $\Sh{X}_0(\pp_1), \Sh{M}_0(\pp_1)$ their rigid generic fibres. We remark that $\mathfrak{X}_0(\pp_1)$ and $\mathfrak{M}_0(\pp_1)$ are normal and hence relative normalizations of $\mathfrak{X}$ and $\mathfrak{M}$ in $\Sh{X}_0(\pp_1)$ and $\Sh{M}_0(\pp_1)$ respectively.

\subsubsection{Hasse invariants} Over $R/\wp$ the $\pp_1$-Verschiebung $V_{\pp_1} \colon \Sh{A}_{R/\wp}^{(\pp_1)} \to \Sh{A}_{R/\wp}$ defines the partial Hasse invariant at $\pp_1$, 
\[
 \Ha_1 \in H^0(X_{R/\wp}, \omega_{\Sh{A},1}^{p-1}).
\]

\begin{defn}
    Given a point $x \in \Sh{X}$, define the $\pp_1$-partial Hodge height of $x$ to be 
    \[
    \Hdg_1(x) = v_p(\widetilde{\Ha}_1(x)) \in \Q \cap [0,1]
    \]
    where $\widetilde{\Ha}_1$ is a local lift of $\Ha_1$ around $x$. 
\end{defn}

For $v \in \Q \cap [0,1]$, define $\Sh{X}_v$ to be the inverse image $\Hdg_1^{-1}([0,v]) \subset \Sh{X}$. Let $\mathfrak{X}_v$ be its admissible formal model obtained by blowing up $\mathfrak{X}$ along the ideal locally defined by $(\widetilde{\Ha}_1, p^v)$, and then taking normalisation of the largest open subscheme where the ideal $(\widetilde{\Ha}_1, p^v)$ is generated by $\widetilde{\Ha}_1$.

\subsubsection{Partial canonical subgroups} 
The $p$-divisible group $\Sh{A}[\pp_1^{\infty}]$ is of height 2. We will recall some key facts and properties of canonical subgroups associated with this $p$-divisible group.

\begin{theorem}
 [see {\cite[Appendice A]{Andreatta2018leHS}}]
 \label{T103}
   Let $n \ge 1$. If $v < \frac{1}{p^{n+1}}$, the $p$-divisible group $\Sh{A}[\pp_1^{\infty}]$ over $\mathfrak{X}_v$ admits a canonical subgroup $H^{\can}_n$ of level $n$ that lifts $\ker F_{\pp_1}^n := \ker F^n \cap \Sh{A}[\pp_1^n]$ modulo ${p} / {\underline{\Ha}_1^{\frac{p^n-1}{p-1}}}$, where $\underline{\Ha}_1$ is the ideal generated by local lifts of $\Ha_1$ in $\Sh{O}_{\mathfrak{X}_v}$. Moreover, $\Hdg_1(\Sh{A}/H^{\can}_1) = p\Hdg_1(\Sh{A})$. 
\end{theorem}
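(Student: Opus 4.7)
The plan is to reduce to the classical theory of canonical subgroups for one-dimensional $p$-divisible groups, following the approach of Katz, Abbes--Mokrane, Fargues, and Andreatta--Iovita--Stevens. Since $p = \pp_1\pp_2$ splits in $F$ and $\Sh{A}$ carries an $\Sh{O}_F$-action, the idempotents of $\Sh{O}_F \otimes \Z_p \simeq \Z_p \times \Z_p$ give a canonical decomposition $\Sh{A}[p^\infty] = \Sh{A}[\pp_1^\infty] \oplus \Sh{A}[\pp_2^\infty]$ into two one-dimensional $p$-divisible groups of height $2$. By construction, $\Ha_1$ is the Hasse invariant of the $\pp_1$-summand, so $\Hdg_1(x)$ is exactly the Hodge height of $\Sh{A}[\pp_1^\infty]$ at $x$ in the usual sense. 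Thus the statement reduces to constructing canonical subgroups for this one-dimensional height-$2$ summand.

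The case $n=1$ is the crucial one. Working over $\mathfrak{X}_v$ with $v < 1/p^2$, I would show that modulo the ideal $p / \underline{\Ha}_1$ the relative Frobenius of $\Sh{A}[\pp_1]$ has a well-defined kernel which is a finite locally free subgroup scheme of rank $p$. The bound $v < 1/p^2$ ensures that the relevant Hodge--Tate map is sufficiently close to an isomorphism, so that the obstruction to lifting this mod-$(p/\underline{\Ha}_1)$ subgroup to a finite flat subgroup scheme $H^{\can}_1 \subset \Sh{A}[\pp_1]$ over $\mathfrak{X}_v$ vanishes. This lifting can be done either via Grothendieck--Messing crystalline deformation theory applied to the $\pp_1$-component, or directly by using the explicit description of subgroups of $\Sh{A}[\pp_1]$ via the Hodge--Tate exact sequence, as in Fargues' approach.

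To obtain the identity $\Hdg_1(\Sh{A}/H^{\can}_1) = p\,\Hdg_1(\Sh{A})$, I would use that the quotient isogeny $\Sh{A} \to \Sh{A}/H^{\can}_1$ reduces modulo $p / \underline{\Ha}_1$ to the relative Frobenius $F_{\pp_1}$; dualising, the Verschiebung of $\Sh{A}/H^{\can}_1$ composed with the dual isogeny reduces to multiplication by $p$ on $\Sh{A}[\pp_1]$. Comparing determinants of the action on $\omega_{\Sh{A},1}$ yields the claimed relation. Having established the level-$1$ statement with the precise congruence, the general $n$ case follows by induction: since $\Hdg_1(\Sh{A}/H^{\can}_1) = pv < 1/p^n$, the inductive hypothesis applied to $\Sh{A}/H^{\can}_1$ produces a level-$(n-1)$ canonical subgroup there, and its preimage in $\Sh{A}$ is $H^{\can}_n$. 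Tracking the error terms through the $n$ iterations gives the congruence modulo $p/\underline{\Ha}_1^{(p^n-1)/(p-1)}$, the denominator being the geometric sum $1 + p + \dots + p^{n-1}$ arising from the cumulative $p$-scaling of Hodge heights at each step.

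The main technical obstacle is the lifting step at $n=1$: one needs to verify that the vanishing of the obstruction really does follow from the bound $v < 1/p^2$, and this requires the explicit estimates on the kernel of the Hodge--Tate map for $\pp_1$-divisible groups which are precisely the content of \cite[Appendice A]{Andreatta2018leHS}. Everything else is formal manipulation of these core estimates.
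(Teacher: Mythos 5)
Your proposal is correct and follows essentially the same route as the paper, which does not prove this theorem itself but simply invokes the canonical subgroup theory of \cite[Appendice A]{Andreatta2018leHS}: splitting $\Sh{A}[p^\infty]$ via $\Sh{O}_F\otimes\Z_p\simeq\Z_p\times\Z_p$, building $H^{\can}_1$ for the one-dimensional height-$2$ summand $\Sh{A}[\pp_1^\infty]$, using $\Hdg_1(\Sh{A}/H^{\can}_1)=p\Hdg_1(\Sh{A})$ to induct, and accumulating the factors $\underline{\Ha}_1^{p^i}$ to get the modulus $p/\underline{\Ha}_1^{(p^n-1)/(p-1)}$ is exactly the construction in that reference. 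The only caveat is that your central lifting step (vanishing of the obstruction for $v<1/p^{2}$) is itself the technical core of the cited appendix rather than something you establish, but since the paper likewise defers entirely to that source, this is consistent with its treatment.
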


\begin{rem}
    In fact for the canonical subgroup of level $1$, Katz \cite{katzpadic} shows that a weaker bound $\Hdg_1 < p/(p+1)$ is sufficient for its existence.
\end{rem}

\subsection{Integral structures on modular and de Rham sheaves}

 Let $n \geq 1$, and $v < \frac{1}{p^{n+1}}$, so that there is a canonical subgroup $H^{\can}_n$ over $\mathfrak{X}_v$. Let $\mathfrak{M}_v$ be the pullback of $\mathfrak{X}_v$ to $\mathfrak{M}$, and $\Sh{M}_v$ its generic fibre. Let $\Sh{IG}_{v, n} \to \Sh{M}_v$ be the partial Igusa tower representing the functor of isomorphisms $\Z/p^n\Z \cong H^{\can, D}_n$ of finite \'{e}tale group schemes, and let $\mathfrak{IG}_{v, n}$ be the normalisation of $\mathfrak{M}_v$ in $\Sh{IG}_{v, n}$. We note that $\Sh{IG}_{v, n}$ is a Galois cover of $\Sh{M}_v$ with Galois group $(\Z/p^n\Z)^{\times}$.

The Hodge--Tate map and its linearisation induces a map 
 \[
  \mathrm{HT}_v \colon H^{\can,D}_n(\mathfrak{IG}_{v, n}) \otimes \Sh{O}_{\mathfrak{IG}_{v, n}}/p^{n-v\frac{p^n-1}{p-1}} 
  \xrightarrow{} \omega_{H^{\can}_n}/p^{n-v\frac{p^n-1}{p-1}} 
  \simeq \omega_{\Sh{A},1}/p^{n-v\frac{p^n-1}{p-1}}
 \]
 whose cokernel is killed by $p^{\frac{v}{p-1}}$ \cite[\S5.5.1]{BoxerPilloni_ModularCurve}. Using this, one can define an integral structure $\omega^{\sharp}_{\Sh{A},1}$ on $\omega_{\Sh{A},1}$ whose existence we state as a theorem.
\begin{theorem}
    There exists an $\Sh{O}_{\mathfrak{IG}_{v, n}}$-sheaf $\omega^{\sharp}_{\Sh{A},1}$, with $p\omega_{\Sh{A},1} \subset \omega^{\sharp}_{\Sh{A},1} \subset \omega_{\Sh{A},1}$, such that $\omega^{\sharp}_{\Sh{A},1}/p^{n-v\frac{p^n}{p-1}}$ is equipped with a marked section coming from the Hodge--Tate map:
    \[
    \mathrm{HT}_v \colon H^{\can, D}_n(\mathfrak{IG}_{v, n}) \otimes \Sh{O}_{\mathfrak{IG}_{v, n}}/p^{n-v\frac{p^n}{p-1}} \xrightarrow{\simeq} \omega^{\sharp}_{\Sh{A},1}/p^{n-v\frac{p^n}{p-1}}.
    \]
\end{theorem}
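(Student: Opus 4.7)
The plan is to construct $\omega^{\sharp}_{\Sh{A},1}$ locally as the coherent subsheaf of $\omega_{\Sh{A},1}$ generated by any lift of the Hodge--Tate image of the universal generator of $H^{\can,D}_n$ together with $p\omega_{\Sh{A},1}$, and then check that the local pieces glue. This is the ``saturation along the marked section'' construction inherent in the formalism of vector bundles with marked sections from \cite{andreatta2021triple}, applied to $\omega_{\Sh{A},1}$ and the section $\mathrm{HT}_v(P)$.

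Set $\beta := n - v\frac{p^n-1}{p-1}$ and $\alpha := \frac{v}{p-1}$, so $\beta - \alpha = n - v\frac{p^n}{p-1}$ is the modulus in the statement, and the hypothesis $v < p^{-(n+1)}$ forces $\beta > \alpha > 0$. Because $\Sh{IG}_{v,n} \to \Sh{M}_v$ trivialises the finite \'etale group scheme $H^{\can,D}_n$, the tautological section $P$ generates $H^{\can,D}_n(\mathfrak{IG}_{v,n})$, and $\mathrm{HT}_v(P)$ is a well-defined element of $\omega_{\Sh{A},1}/p^\beta$. Locally on $\mathfrak{IG}_{v,n}$, pick a lift $s \in \omega_{\Sh{A},1}$ of $\mathrm{HT}_v(P)$ and set
\[
\omega^{\sharp}_{\Sh{A},1} := \Sh{O}_{\mathfrak{IG}_{v,n}}\cdot s + p\cdot \omega_{\Sh{A},1} \subseteq \omega_{\Sh{A},1},
\]
which immediately gives the containments $p\omega_{\Sh{A},1} \subseteq \omega^{\sharp}_{\Sh{A},1} \subseteq \omega_{\Sh{A},1}$. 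To see independence of the lift, work in a chart trivialising $\omega_{\Sh{A},1}$ with basis $e$ and write $s = u\cdot e$, $u \in \Sh{O}_{\mathfrak{IG}_{v,n}}$; the cokernel bound $p^{v/(p-1)}$ on $\mathrm{HT}_v$ forces $v_p(u) \leq \alpha$, so the ideal $(u,p) \subseteq \Sh{O}_{\mathfrak{IG}_{v,n}}$ contains $p^\alpha$, hence $p^\beta$. Replacing $s$ by another lift alters $u$ by an element of $p^\beta \Sh{O}_{\mathfrak{IG}_{v,n}}$, which does not change the ideal $(u,p)$; so the local definition of $\omega^{\sharp}_{\Sh{A},1}$ is intrinsic and the local constructions patch to a global coherent subsheaf.

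For the marked section, define
\[
\mathrm{HT}_v \colon H^{\can,D}_n(\mathfrak{IG}_{v,n})\otimes \Sh{O}_{\mathfrak{IG}_{v,n}}/p^{\beta-\alpha} \longrightarrow \omega^{\sharp}_{\Sh{A},1}/p^{\beta-\alpha}, \qquad P \longmapsto s \bmod p^{\beta-\alpha}.
\]
This is well-defined because two lifts of $\mathrm{HT}_v(P)$ differ by an element of $p^\beta\omega_{\Sh{A},1}$, which lies in $p^{\beta-\alpha}\omega^{\sharp}_{\Sh{A},1}$ by the inclusion $p^\alpha \omega_{\Sh{A},1} \subseteq \omega^{\sharp}_{\Sh{A},1}$ read off from the local description. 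Locally $\omega^{\sharp}_{\Sh{A},1} = (u,p)\cdot e = (p^{v_p(u)})\cdot e$ is free of rank one, and modulo $p^{\beta-\alpha}$ it is freely generated by $s$; the map sends the generator $P$ to the generator $s$, so it is an isomorphism. The main obstacle is simply the bookkeeping of $p$-adic exponents, and the decisive input beyond formalities is the cokernel bound $v_p(u) \leq \alpha$, which guarantees both independence of the lift (via containment of $p^\beta$ in $(u,p)$) and that the comparison map becomes an isomorphism in precisely the claimed modulus $p^{\beta-\alpha}$.
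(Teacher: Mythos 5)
Your construction is correct and is essentially the argument of Boxer--Pilloni \S 5.5.1 to which the paper's proof simply defers: take $\omega^{\sharp}_{\Sh{A},1}$ to be the subsheaf generated by a lift $s$ of $\mathrm{HT}_v(P^{\mathrm{univ}})$ together with $p\,\omega_{\Sh{A},1}$, use the cokernel bound $p^{v/(p-1)}$ to see it is locally free of rank one on $s$, and the exponent bookkeeping $\beta-\alpha=n-v\tfrac{p^n}{p-1}$ is exactly right. The one step to repair is the valuation-theoretic shorthand: $\Sh{O}_{\mathfrak{IG}_{v, n}}$ is two-dimensional and not a valuation ring, so ``$v_p(u)\le\alpha$'' and ``$(u,p)=(p^{v_p(u)})$'' are not meaningful as written (locally $u$ is essentially a root of a lift of the Hasse invariant, not a power of $p$ times a unit); instead, the cokernel statement gives the ideal containment $p^{\alpha}\in(u,p^{\beta})$, and since $\beta>\alpha$ one writes $p^{\alpha}=au+bp^{\beta}$ and notes that $1-bp^{\beta-\alpha}$ is a unit in the $p$-adically complete ring, whence $p^{\alpha}\in(u)$; as $\alpha<1$ this yields $(u,p)=(u)$, which gives at once the independence of the lift, the containment $p^{\alpha}\omega_{\Sh{A},1}\subset\omega^{\sharp}_{\Sh{A},1}$, and the local freeness of $\omega^{\sharp}_{\Sh{A},1}$ on $s$ (with $u$ a nonzerodivisor because $p^{\alpha}\in(u)$), after which your identification of $\omega^{\sharp}_{\Sh{A},1}/p^{\beta-\alpha}$ with the free rank-one module on $\mathrm{HT}_v(P^{\mathrm{univ}})$ goes through verbatim.
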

\begin{proof}
    See \cite[\S5.5.1]{BoxerPilloni_ModularCurve} for a proof in the modular curve case, but the proof is exactly same.
\end{proof}

\begin{cor}
    The isomorphism class of $\omega^{\sharp}_{\Sh{A},1}$ as a line bundle with a marked section defines a class in $\check{H}^1(\mathfrak{IG}_{v, n}, 1+p^{n-v\frac{p^n}{p-1}}\Ga)$.
\end{cor}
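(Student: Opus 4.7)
My plan is to realize the pair $(\Os, \mathrm{HT}_v)$ as explicit \v{C}ech $1$-cocycle data in the standard manner for a line bundle equipped with a marked generator modulo a power of $p$. Write $N := n - v\tfrac{p^n}{p-1}$ for brevity. The universal property of the Igusa cover provides a tautological isomorphism $\Z/p^n\Z \simeq H^{\can,D}_n$ over $\mathfrak{IG}_{v,n}$, hence a canonical generator of $H^{\can,D}_n(\mathfrak{IG}_{v,n})$; the theorem identifies its image under $\mathrm{HT}_v$ with a canonical generator $\bar{s}$ of $\Os/p^N$.

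First I would choose an affine open cover $\{U_i\}$ of $\mathfrak{IG}_{v,n}$ on which $\Os$ is free, pick a local basis $s_i$ of $\Os|_{U_i}$, and then rescale each $s_i$ by a unit so that $s_i \equiv \bar{s}|_{U_i} \pmod{p^N}$. This is possible because $\bar{s}$ is itself a generator of the reduction. The transition functions $g_{ij} \in \Gm(U_{ij})$ determined by $s_i = g_{ij} s_j$ then automatically satisfy $g_{ij} \equiv 1 \pmod{p^N}$, hence form a \v{C}ech $1$-cocycle with values in the multiplicative subsheaf $1 + p^N\Ga \subset \Gm$.

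Next I would verify independence of choices. Any two normalized local bases $s_i,s_i'$ differ by $u_i \in (1 + p^N \Ga)(U_i)$, and the resulting cocycles differ by the coboundary $u_i u_j^{-1}$; refining the open cover gives cohomologous cocycles by the usual \v{C}ech comparison; and an isomorphism $(\Os,\bar{s}) \simeq (\Os',\bar{s}')$ respecting marked sections, read in a common trivializing cover, again produces cohomologous cocycles. Together these show that the construction descends to a well-defined class in $\check{H}^1(\mathfrak{IG}_{v,n}, 1 + p^N\Ga)$.

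No step is seriously hard; the only point worth flagging is that it is the marked section itself, rather than merely its isomorphism class as an abstract generator of $\Os/p^N$, that cuts the transition functions and the coboundary freedom down from all of $\Gm$ to $1 + p^N\Ga$. This is the same mechanism underlying the connecting homomorphism from $\check{H}^0$ of the quotient $\Gm/(1+p^N\Ga)$ to $\check{H}^1$ of $1+p^N\Ga$, so in effect we are simply exhibiting an explicit lift across this boundary map.
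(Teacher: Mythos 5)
Your argument is correct and is exactly the standard \v{C}ech computation that the paper itself relies on: the corollary is stated there without proof, as an immediate consequence of the preceding theorem and the marked-sections formalism of \cite{andreatta2021triple}, and your normalization of local bases modulo $p^{n-v\frac{p^n}{p-1}}$ is the intended mechanism. The one implicit step you use --- that a local basis can be rescaled to agree with the marked section modulo $p^{n-v\frac{p^n}{p-1}}$, which requires lifting a unit of $\Sh{O}/p^{n-v\frac{p^n}{p-1}}$ to a unit of $\Sh{O}$ --- is unproblematic on the $p$-adically complete affine formal opens of $\mathfrak{IG}_{v,n}$, so there is no gap.
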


In \cite{kazi2024twistedtripleproductpadic}, the first author defined an integral structure on the relative de Rham sheaf $\Sh{H}_{\Sh{A}}$ similar to what was done in \cite{andreatta2021triple} for the case of modular curves. In our present setting we only need to impose an integral structure on $\Sh{H}_{\Sh{A},1}$ since we are only interested in the interpolation sheaf in one variable. This situation is exactly similar to the modular curve case. Thus we define 
\[
\Sh{H}^{\sharp}_{\Sh{A},1} := \omega^{\sharp}_{\Sh{A},1} + \underline{\Ha}_1^{\frac{p}{p-1}}\Sh{H}_{\Sh{A},1}.
\]
We use the prime-to-$p$ polarisation on each connected component of $\mathfrak{M}$ to identify (non-canonically) $\omega_{\Sh{A},1}^{\vee} \simeq \omega_{\Sh{A}^{\vee},1}^{\vee}$. 
Thus we get a Hodge filtration on $\Sh{H}^{\sharp}_{\Sh{A},1}$:
\[
0 \to \omega^{\sharp}_{\Sh{A},1} \to \Sh{H}^{\sharp}_{\Sh{A},1} \to \underline{\Ha}_1^{\frac{p}{p-1}}\omega^{\vee}_{\Sh{A},1} \to 0.
\]
\begin{cor}
    The isomorphism class of $\Sh{H}^{\sharp}_{\Sh{A},1}$ as a vector bundle with a marked section in the sense of \emph{\cite{andreatta2021triple}} defines a class in $\check{H}^1(\mathfrak{IG}_{v, n}, \Sh{B}^{n-v\frac{p^n}{p-1},0}_2)$, where for any $r, r' \in \Q_{>0}$,
    \[
    B_2^{r,r'} := \begin{pmatrix}
        1+p^r\Ga & p^{r'}\Ga \\
        0 & \Gm
    \end{pmatrix}.
    \]
\end{cor}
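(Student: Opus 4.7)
The plan is to exhibit a sufficiently fine Čech cover of $\mathfrak{IG}_{v, n}$ on which the pair $(\Sh{H}^{\sharp}_{\Sh{A},1}, \text{marked section})$ can be trivialised, and to read off the transition cocycle. Choose an open cover $\{U_i\}$ such that, on each $U_i$, we can (a) trivialise $\omega^{\sharp}_{\Sh{A},1}|_{U_i}$ compatibly with the marked section coming from $\mathrm{HT}_v$ (which is possible by the previous Corollary), (b) trivialise the quotient line bundle $\underline{\Ha}_1^{\frac{p}{p-1}} \omega^{\vee}_{\Sh{A},1}|_{U_i}$, and (c) choose a splitting of the Hodge filtration short exact sequence over $U_i$. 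This yields a basis $(e_1^{(i)}, e_2^{(i)})$ of $\Sh{H}^{\sharp}_{\Sh{A},1}|_{U_i}$, where $e_1^{(i)}$ is a generator of $\omega^{\sharp}_{\Sh{A},1}|_{U_i}$ whose reduction modulo $p^{n - v\frac{p^n}{p-1}}$ is the distinguished Hodge--Tate section, and $e_2^{(i)}$ lifts a chosen generator of the quotient.

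On each overlap $U_i \cap U_j$, the transition matrix from $(e_1^{(i)}, e_2^{(i)})$ to $(e_1^{(j)}, e_2^{(j)})$ is upper triangular because $\omega^{\sharp}_{\Sh{A},1}$ is preserved. Writing
\[
e_1^{(j)} = \alpha_{ij}\, e_1^{(i)}, \qquad e_2^{(j)} = \beta_{ij}\, e_1^{(i)} + \gamma_{ij}\, e_2^{(i)},
\]
the previous Corollary on $\omega^{\sharp}_{\Sh{A},1}$ shows $\alpha_{ij} \in 1 + p^{n - v\frac{p^n}{p-1}}\Ga$, since compatibility with the marked Hodge--Tate section forces $\alpha_{ij} \equiv 1$ to that precision. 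The entry $\gamma_{ij}$ lies in $\Gm$ because it is the transition function for the line bundle quotient, and $\beta_{ij}$ is an unconstrained element of $\Ga = p^0 \Ga$, reflecting the fact that the bundle-with-marked-section structure imposes no condition on the choice of splitting. Hence the matrix $\left(\begin{smallmatrix} \alpha_{ij} & \beta_{ij} \\ 0 & \gamma_{ij} \end{smallmatrix}\right)$ lies in $\Sh{B}_2^{n - v\frac{p^n}{p-1},\, 0}(U_i \cap U_j)$.

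The cocycle condition $M_{ij} M_{jk} = M_{ik}$ on triple overlaps is automatic from transitivity of the transition functions. A different choice of local bases modifies the cocycle by a coboundary with values in $\Sh{B}_2^{n - v\frac{p^n}{p-1},\, 0}$ (changes of $e_1^{(i)}$ by a unit in $1 + p^{n - v\frac{p^n}{p-1}}\Ga$, changes of the splitting by $\Ga$, changes of the quotient generator by $\Gm$), so the class in $\check{H}^1(\mathfrak{IG}_{v, n}, \Sh{B}_2^{n - v\frac{p^n}{p-1},\, 0})$ is well defined. The only subtle point, and the one I would take most care with, is the $1 + p^r \Ga$ bound on $\alpha_{ij}$: this rests entirely on the existence of the marked section $\mathrm{HT}_v$ on $\omega^{\sharp}_{\Sh{A},1}/p^{n - v\frac{p^n}{p-1}}$ provided by the previous theorem, which is precisely what rigidifies the line bundle to the claimed accuracy.
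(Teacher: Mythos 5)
Your argument is correct and is essentially the verification the paper leaves implicit: the Corollary is stated without proof as a direct consequence of the marked-section theorem for $\omega^{\sharp}_{\Sh{A},1}$ and the Hodge filtration on $\Sh{H}^{\sharp}_{\Sh{A},1}$, via the vector-bundles-with-marked-sections formalism of \cite{andreatta2021triple}. Your \v{C}ech cocycle computation — filtration-compatible local bases lifting the Hodge--Tate section, giving upper-triangular transition matrices with top-left entry in $1+p^{n-v\frac{p^n}{p-1}}\Ga$, unconstrained off-diagonal entry in $\Ga$, and $\Gm$-valued entry on the quotient — is exactly that argument.
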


\begin{lemma}\label{L109}
    Let $A, A'$ be abelian varieties that define points in $\mathfrak{IG}_{v, n}(R)$. Let  $\check{\pi} \colon {A} \to {A}'$ be a cyclic $\pp_1$-isogeny such that $\ker \check{\pi} \cap H^{\can}_1({A}) = 0$. Then the dual isogeny $\pi \colon A' \to A$ identifies $A'/H^{\can}_1(A') \simeq A$, and $\check{\pi}$ is a lift of the Verschiebung of $A'$. Moreover, $\check{\pi}^* \colon \Sh{H}_{{A}',1} \to \Sh{H}_{{A},1}$ restricts to a well-defined map $\check{\pi}^* \colon \Sh{H}^{\sharp}_{{A}',1} \to \Sh{H}^{\sharp}_{{A},1}$ such that the induced map on the graded pieces of the Hodge filtration can be described as follows:
    \[\begin{tikzcd}
	0 & {\omega^{\sharp}_{A',1}} & {\Sh{H}^{\sharp}_{A',1}} & {\underline{\Ha}^{\frac{p}{p-1}}_1(A')\omega^{\vee}_{A',1}} & 0 \\
	0 & {\omega^{\sharp}_{A,1}} & {\Sh{H}^{\sharp}_{A,1}} & {\underline{\Ha}^{\frac{p^2}{p-1}}_1(A')\omega^{\vee}_{A,1}} & 0
	\arrow[from=1-1, to=1-2]
    \arrow[from=2-1, to=2-2]
	\arrow[from=1-2, to=1-3]
	\arrow["\simeq", from=1-2, to=2-2]
	\arrow[from=1-3, to=1-4]
	\arrow["{\check{\pi}^*}", from=1-3, to=2-3]
	\arrow[from=1-4, to=1-5]
	\arrow["{\frac{p}{\underline{\Ha}^{p+1}_1}}", from=1-4, to=2-4]
	\arrow[from=2-2, to=2-3]
	\arrow[from=2-3, to=2-4]
	\arrow[from=2-4, to=2-5]
\end{tikzcd}\]
\end{lemma}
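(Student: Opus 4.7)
The plan is first to establish the geometric statements about $\pi$ and $\check\pi$ from the anti-canonicity hypothesis, and then to transport the integral structures via the functoriality of the Hodge--Tate map. For the geometric part, both $\ker\check\pi$ and $H^{\can}_1(A)$ are order-$p$ finite flat subgroups of the order-$p^2$ group $A[\pp_1]$, so the disjointness hypothesis forces a direct sum decomposition $A[\pp_1] = \ker\check\pi \oplus H^{\can}_1(A)$. The image $\check\pi(H^{\can}_1(A)) \subset A'[\pp_1]$ is then an order-$p$ subgroup lifting $\ker F_{\pp_1}$ in the reduction, so by the uniqueness part of Theorem \ref{T103} it coincides with $H^{\can}_1(A')$. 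Since $\pi \circ \check\pi$ kills $A[\pp_1]$, we get $\pi(H^{\can}_1(A')) = \pi\check\pi(H^{\can}_1(A)) = 0$, hence $\ker\pi = H^{\can}_1(A')$ by comparing orders, and $\pi$ identifies $A'/H^{\can}_1(A') \simeq A$. That $\pi$ is a Frobenius lift is then part of Theorem \ref{T103}, and $\check\pi$ is a Verschiebung lift because $\pi\circ\check\pi$ is the standard $\pp_1$-isogeny with kernel $A[\pp_1]$, matching $F_{\pp_1} V_{\pp_1}$ on the reduction.

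For the left vertical arrow of the diagram, the anti-canonicity of $\check\pi$ implies it restricts to an isomorphism $H^{\can}_n(A) \xrightarrow{\simeq} H^{\can}_n(A')$ (same order, trivial intersection with $\ker\check\pi$). Dualising gives a compatible isomorphism of Cartier duals, and via the Hodge--Tate maps $\mathrm{HT}_v$ for $A$ and $A'$ this forces $\check\pi^*$ to send the marked section of $\omega^{\sharp}_{A',1}$ to that of $\omega^{\sharp}_{A,1}$. Since $\omega^{\sharp}$ is characterised by the data of its marked section, $\check\pi^*$ restricts to an isomorphism $\omega^{\sharp}_{A',1} \xrightarrow{\simeq} \omega^{\sharp}_{A,1}$.

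For the middle and right vertical arrows, combine the previous step with the identity $\underline{\Ha}_1(A) = \underline{\Ha}_1(A')^p$, which follows from $\Hdg_1(A) = p\,\Hdg_1(A')$ in Theorem \ref{T103} and accounts for the exponent shift from $\underline{\Ha}_1(A')^{p/(p-1)}$ to $\underline{\Ha}_1(A')^{p^2/(p-1)}$ on the bottom-right term of the diagram. To obtain the factor $p/\underline{\Ha}_1^{p+1}$ on the right vertical arrow, I would analyse $\check\pi^*$ on the quotient piece $\omega^{\vee}$ via the polarisation-induced identification $\omega^{\vee}_{A^{\vee},1} \simeq \omega^{\vee}_{A,1}$ and the relation $\pi\circ\check\pi = [p]$ on the $\pp_1$-component: the Frobenius lift $\pi^*$ sends $\omega_{A,1}$ into $\underline{\Ha}_1(A')^{p/(p-1)}\omega_{A',1}$ (standard Hasse-invariant computation for a Frobenius lift), and dualising while dividing by multiplication-by-$p$ produces the stated factor.

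The main obstacle is the precise bookkeeping of powers of the Hasse ideal for the right vertical arrow, in particular pinning down that Frobenius pullback on $\omega$ has image exactly $\underline{\Ha}_1^{p/(p-1)}\omega$ and that dualising combined with the $[p]$-scaling from $\pi\check\pi$ yields precisely $p/\underline{\Ha}_1^{p+1}$. The argument is entirely analogous to the modular-curve computation carried out in \cite[\S5.5]{andreatta2021triple} and to its Hilbert generalisation in \cite{kazi2024twistedtripleproductpadic}, specialised here to the $\pp_1$-component only, so no genuinely new difficulties arise beyond faithfully transcribing those calculations.
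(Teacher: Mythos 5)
Your overall route matches the paper's: the reduction of the first assertion to the anti-canonicity hypothesis, the relation $\underline{\Ha}_1(A)=\underline{\Ha}_1(A')^p$ from Theorem \ref{T103}, the treatment of the left vertical arrow via the induced isomorphism $H^{\can}_n(A)\simeq H^{\can}_n(A')$ and functoriality of the Hodge--Tate map, and the deferral of the well-definedness of $\check{\pi}^*$ on $\Sh{H}^{\sharp}$ to \cite{andreatta2021triple} and \cite{kazi2024twistedtripleproductpadic} are all exactly what the paper does. The gap is concentrated in the one step that genuinely requires a computation: the identification of the right vertical arrow with $\tfrac{p}{\underline{\Ha}_1^{p+1}}$. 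Your key input --- that the Frobenius-lift pullback $\pi^*$ has image exactly $\underline{\Ha}_1(A')^{p/(p-1)}\omega_{A',1}$ --- is false, and with it the bookkeeping cannot produce the stated factor. The correct statement, and the one the paper uses, is that since $\check{\pi}$ lifts the Verschiebung of $A'$, the map $\check{\pi}^*$ on $\omega_{A',1}$ is multiplication by $\underline{\Ha}_1(A')$ up to a unit (valuation $\Hdg_1(A')$, not $\tfrac{p}{p-1}\Hdg_1(A')$); combined with $\check{\pi}^*\circ\pi^*=p$ this gives $\pi^*=p/\underline{\Ha}_1(A')$ up to a unit, hence $(\pi^*)^{\vee}$ on the quotient piece $\omega^{\vee}_{A',1}\to\omega^{\vee}_{A,1}$ is multiplication by $p/\underline{\Ha}_1(A')$; finally, rescaling between the normalisations $\underline{\Ha}_1(A')^{p/(p-1)}$ on the source and $\underline{\Ha}_1(A)^{p/(p-1)}=\underline{\Ha}_1(A')^{p^2/(p-1)}$ on the target multiplies this by $\underline{\Ha}_1(A')^{-p}$, giving precisely $p/\underline{\Ha}_1^{p+1}$. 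In particular there is no ``dividing by multiplication-by-$p$'': the $p$ sits in the numerator already, coming from $\pi\circ\check{\pi}=[\varpi_1]$. With your exponent the relevant valuations ($\tfrac{p}{p-1}\Hdg_1(A')$ versus the correct $1-\Hdg_1(A')$) disagree throughout the range $\Hdg_1\le v$ considered here, so your recipe does not close; moreover an inclusion ``sends into'' is in any case insufficient, since identifying the graded map with a specific scalar requires equality up to a unit, which is exactly what the Verschiebung-lift description of $\check{\pi}^*$ provides.

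Two smaller points. First, your identification $\check{\pi}(H^{\can}_1(A))=H^{\can}_1(A')$ via ``lifting $\ker F_{\pp_1}$ in the reduction'' is delicate: distinct finite flat subgroups can have the same reduction (e.g.\ at non-ordinary points), and Theorem \ref{T103} as stated does not contain the uniqueness assertion you invoke. A cleaner argument, consistent with the paper's framework, is via Fargues' degree: $\deg\ker\pi = 1-\deg\ker\check{\pi} > \tfrac{p}{p+1}$, so $\ker\pi$ is the canonical subgroup by Theorem \ref{T105}(1). Second, the analogous identification at level $n$ (needed for the left vertical arrow) is asserted rather than argued, though the paper is equally brief there, so I would not count that against you.
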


\begin{proof}
    The first point is clear. Therefore by Theorem \ref{T103}, $\underline{\Ha}_1(A) = \underline{\Ha}_1^p(A')$. This explains the second row of the diagram. The isogeny $\check{\pi}$ induces a generic isomorphism of canonical subgroups $H^{\can}_n(A) \simeq H^{\can}_n(A')$, hence an isomorphism on $R$-points. Therefore by the functoriality of the Hodge--Tate map, we get an isomorphism $\omega^{\sharp}_{A',1} \simeq \omega^{\sharp}_{A,1}$. The proof that $\check{\pi}^*$ restricts to a well-defined map $\Sh{H}^{\sharp}_{A',1} \to \Sh{H}^{\sharp}_{A,1}$ has been explained in details in \cite[Prop. 4.12, 4.16]{kazi2024twistedtripleproductpadic}. Let's explain why the induced map on the first graded piece $\underline{\Ha}_1^{\frac{p}{p-1}}(A')\omega^{\vee}_{A',1} \to \underline{\Ha}_1^{\frac{p^2}{p-1}}(A')\omega^{\vee}_{A,1}$ can be identified with multiplication by $\frac{p}{\underline{\Ha}_1^{p+1}}$ upon choosing local trivialisation of the sheaves. The map on the Lie algebras can be identified with the map $(\pi^*)^{\vee} \colon \omega^{\vee}_{A',1} \to \omega^{\vee}_{A,1}$, which is given by multiplication by $\frac{p}{\underline{\Ha}_1}$ since $\check{\pi}^*\circ \pi^* = p$ and $\check{\pi}^* = \underline{\Ha}_1$. The lemma follows after adjusting for the scaling terms.
\end{proof}

\begin{rem}\label{R2010}
    Even though we don't need it here, we make the observation that modulo $\frac{p}{\underline{\Ha}_1^{p+1}}$ the above lemma gives a canonical projection $\Sh{H}^{\sharp}_{A',1}/(\frac{p}{\underline{\Ha}_1^{p+1}}) \to \omega^{\sharp}_{A',1}/(\frac{p}{\underline{\Ha}_1^{p+1}})$ which is a retration to the inclusion $\omega^{\sharp}_{A',1} \to \Sh{H}^{\sharp}_{A',1}$. Moreover, since the kernel of this projection is given by the kernel of the map induced by (the lift of) the Verschiebung of $A'$, this can be identified with the unit root splitting over the ordinary locus. In fact using higher powers of the lift of Verschiebung, say $V^n$, we get a splitting modulo $\frac{p^n}{\underline{\Ha}_1^{\frac{(p+1)(p^n-1)}{p-1}}}$. Therefore, the isomorphism class of $\Sh{H}^{\sharp}_{A',1}$ can be viewed as a class 
    \[
    [\Sh{H}^{\sharp}_{A',1}] \in \check{H}^1\left(\mathfrak{IG}_{v, n}, \begin{pmatrix}
        1+p^{n-v\frac{p^n}{p-1}}\Ga & p^{n-v\frac{(p+1)(p^n-1)}{p-1}}\Ga \\
        0 & \Gm
    \end{pmatrix}\right).
    \]
    This refined data is useful for proving that the $p$-adic iterates of the Gauss-Manin connection on appropriately defined nearly overconvergent modular sheaves converge. See \cite{graham2023padic} and \cite{kazi2024twistedtripleproductpadic} for more details.
\end{rem}

\subsection{Interpolation sheaves for overconvergent and nearly overconvergent forms}
Let $\Lambda = R\llbracket \Z_p^{\times} \rrbracket$, and $\Sh{W} = \Sp\Lambda[1/p]$ be the rigid analytic weight space in one variable. Let $\Sh{W}_r \subset \Sh{W}$ be an open affinoid, with a formal model given by $\mathfrak{W}_r$ such that the universal weight $\kappa \colon \Z_p^{\times} \to \Sh{O}_{\mathfrak{W}_r}^{\times}$ is analytic on $1+p^r\Z_p$, for a fixed $r \in \Q_{>0}$. For this fixed $r$, we choose $v$ small enough and $n$ large enough such that $r \leq n - v\frac{p^n}{p-1}$. By analyticity, the universal weight extends to a character $\kappa \colon \Z_p^{\times}(1+p^r\Ga) \to \Sh{O}_{\mathfrak{W}_r}^{\times}$.

\begin{lemma}
    Given any vector bundle $\Sh{V}$ of rank $2$  on a scheme $Y$ with an increasing filtration by sub-vector bundles constituting a full flag, the map $\check{H}^1(Y, \Sh{B}_2) \to \check{H}^1(Y, \Sh{B}_{n+1})$, where $\Sh{B}_i$ is the upper triangular Borel in $\GL_{i}$, sending $[\Sh{V}] \mapsto [\Sym^n\Sh{V}]$ is induced by a group scheme homomorphism $\Sh{B}_2 \to \Sh{B}_{n+1}$ which can be described as follows:
    \[
    \begin{pmatrix}
        1 & b \\
        0 & 1 
    \end{pmatrix} \mapsto \begin{pmatrix}
        1 & b & b^2 &\cdots & b^n \\
        0 & 1 & 2b & \cdots & nb^{n-1} \\
        \vdots & \vdots & \vdots & \ddots &\vdots \\
        0 & 0 & 0 & \cdots & 1
    \end{pmatrix}, \quad \quad \begin{pmatrix}
        a & 0 \\
        0 & c
    \end{pmatrix} \mapsto \begin{pmatrix}
        a^n & 0 & \cdots & 0 \\
        0 & a^n(\frac{c}{a}) & \cdots & 0 \\
        \vdots & \vdots & \ddots & \vdots \\
        0 & 0 & \cdots & a^n(\frac{c}{a})^n
    \end{pmatrix}.
    \]
    In the first association, the $i$-th column of the image matrix are given by the monomials of the binomial expansion $(b+1)^i$. Moreover, the projection $\Sh{B}_m \to \Sh{B}_{m'}$ for any $m'<m$ given by projecting onto the top left $m' \times m'$ square matrix corresponds to sending a rank $m$ vector bundle with an increasing filtration $\{\Fil_i\}_{i=0}^{m-1}$ to the filtered vector bundle $\Fil_{m'-1}$.
\end{lemma}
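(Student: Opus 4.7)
The plan is to work in local trivialisations of $\Sh{V}$ adapted to its flag. Since a rank-$2$ filtered vector bundle is precisely a $\Sh{B}_2$-torsor, I pick locally on $Y$ a basis $(e_1, e_2)$ of $\Sh{V}$ with $e_1$ generating $\Fil_1$, so that transitions between two such bases take values in $\Sh{B}_2$. The sections $\{e_1^{n-i}e_2^i : 0 \le i \le n\}$ then form a basis of $\Sym^n \Sh{V}$, adapted to the canonical filtration $\Fil_i \Sym^n \Sh{V} := \langle e_1^{n-j}e_2^j : j \le i \rangle$, which depends only on $\Fil_1 \Sh{V}$ and therefore globalises. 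Thus $\Sym^n \Sh{V}$ acquires a natural $\Sh{B}_{n+1}$-torsor structure, and since the construction is functorial in the trivialisation, the assignment $[\Sh{V}] \mapsto [\Sym^n \Sh{V}]$ is induced by a group scheme homomorphism $\Sh{B}_2 \to \Sh{B}_{n+1}$, which is what I need to identify.

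To match this homomorphism with the formulae in the statement, it suffices to treat the unipotent and diagonal cases separately, since $\Sh{B}_2$ is generated as an algebraic group by its unipotent radical and its diagonal torus. For $g = \begin{pmatrix} 1 & b \\ 0 & 1 \end{pmatrix}$, one has $e_1' = e_1$ and $e_2' = be_1 + e_2$, and the binomial theorem yields
\[
(e_1')^{n-i}(e_2')^i = e_1^{n-i}(be_1 + e_2)^i = \sum_{j=0}^{i} \binom{i}{j} b^{i-j}\, e_1^{n-j}e_2^j,
\]
whose coefficients are exactly the $i$-th column of the first matrix. For $g = \operatorname{diag}(a,c)$ a single line gives $(e_1')^{n-i}(e_2')^i = a^n (c/a)^i \, e_1^{n-i}e_2^i$, matching the diagonal matrix. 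Upper triangularity of the image in $\Sh{B}_{n+1}$ is automatic, because $e_1' \in \Fil_1$ forces $(e_1')^{n-i}(e_2')^i \in \Fil_i \Sym^n \Sh{V}$ for all $i$.

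The final claim about the projection $\Sh{B}_m \to \Sh{B}_{m'}$ is then a tautology in the chosen trivialisations: any upper-triangular transition function on a rank-$m$ filtered bundle preserves the subspace spanned by the first $m'$ basis vectors (which is exactly $\Fil_{m'-1}$), and its restriction to this subspace is given by the top-left $m' \times m'$ block. I do not anticipate any serious obstacle here; the computation is essentially bookkeeping in a local basis. The only care required is in fixing conventions, most notably the ordering of the symmetric-power basis and the fact that the columns of a transition matrix encode the coordinates of the new basis vectors in terms of the old, which accounts for the appearance of the monomials of $(b+1)^i$ (in the order $b^i, ib^{i-1}, \binom{i}{2}b^{i-2}, \ldots, 1$) down the $i$-th column.
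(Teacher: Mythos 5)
Your argument is correct, and since the paper simply declares this lemma ``left to the reader,'' your local computation in an adapted basis $(e_1,e_2)$ --- identifying the induced transition matrices of $\Sym^n\Sh{V}$ via the binomial expansion and checking the diagonal and block-projection claims --- is exactly the intended verification. The only (cosmetic) slip is indexing: with the paper's convention $\{\Fil_i\}_{i=0}^{m-1}$, the line sub-bundle generated by $e_1$ is $\Fil_0$, not $\Fil_1$.
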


\begin{proof}
    Left to the reader.
\end{proof}

The above lemma suggests that for the $r$-analytic universal weight $\kappa$, we can consider the image of $[\Sh{H}^{\sharp}_{\Sh{A},1}]$  in $\check{H}^1(\mathfrak{IG}_{v, n}, \varprojlim_m \Sh{B}_m)$ under the compatible system of group homomorphisms $\Sh{B}_2^{n-v\frac{p^n}{p-1},0} \to \Sh{B}_{m+1}$ which can be described as follows: on the unipotent part, the homomorphism is given by the same homomorphism as in the lemma above. On the diagonal torus $\mathrm{diag}[a,c] \mapsto \mathrm{diag}[a^{\kappa}, a^{\kappa}(\frac{c}{a}), \cdots, a^{\kappa}(\frac{c}{a})^m]$. We note that this makes sense since $r < n-v\frac{p^n}{p-1}$ and $\kappa$ extends to a character on $1+p^r\Ga$. Let us call this homomorphism, as well as the induced map on cohomology $\Sym^{\kappa} \colon \check{H}^1(\mathfrak{IG}_{v, n}, \Sh{B}_2^{n-v\frac{p^n}{p-1}, 0}) \to \check{H}^1(\mathfrak{IG}_{v, n}, \varprojlim_m\Sh{B}_m)$.

\begin{defn}
    Define the Banach sheaf $\Sh{N}^{(\kappa,0;0)}_{\mathfrak{IG}_{v, n}}$ of nearly overconvergent modular forms of weight $(\kappa,0;0)$ on $\mathfrak{IG}_{v, n}$ to be the sheaf whose isomorphism class as a $p$-adically complete filtered colimit of locally free sheaves $\{\Fil_i\Sh{N}^{(\kappa,0;0)}_{\mathfrak{IG}_{v, n}}\}_{i\geq 0}$ is given by $\Sym^{\kappa}([\Sh{H}^{\sharp}_{\Sh{A},1}])$. Define the overconvergent sheaf of weight $\kappa$ to be $\Omega^{(\kappa,0;0)}_{\mathfrak{IG}_{v, n}} := \Fil_0\Sh{N}^{(\kappa,0;0)}_{\mathfrak{IG}_{v, n}}$.
\end{defn}

\subsubsection{An explicit model of the sheaves}

 Though we have defined the sheaves $\Sh{N}^{(\kappa, 0;0)}_{\mathfrak{IG}_{v, n}}, \Omega^{(\kappa,0;0)}_{\mathfrak{IG}_{v, n}}$ only upto isomorphism, canonical representatives for their isomorphism class can be constructed using appropriate geometric vector bundles associated with the sheaves $\Sh{H}^{\sharp}_{\Sh{A},1}$ and $\omega^{\sharp}_{\Sh{A},1}$ as done in \cite{andreatta2021triple}. Our definition above is an algebraic reformulation of the geometric formalism of vector bundle with marked sections of loc. cit. In the following we give a brief account of the geometric construction.

 Let $s = HT_v(P^{\mathrm{univ}}) \in H^{\can,D}_n(\mathfrak{IG}_{v, n})$ be the marked section of the line bundle $\omega^{\sharp}_{\Sh{A},1}$. Let $\mathbb{V}(\Sh{H}^{\sharp}_{\Sh{A},1}) \to \mathfrak{IG}_{v, n}$ be the geometric vector bundle whose sections are linear functionals on $\Sh{H}^{\sharp}_{\Sh{A},1}$, i.e. \[\mathbb{V}(\Sh{H}^{\sharp}_{\Sh{A},1})(\gamma \colon \Spf R' \to \mathfrak{IG}_{v, n}) = \Hom_{R'}(\gamma^*\Sh{H}^{\sharp}_{\Sh{A},1}, R').\] 
Let $\V{\Sh{H}^{\sharp}_{\Sh{A},1}}{s}$ be the subfunctor of $\mathbb{V}(\Sh{H}^{\sharp}_{\Sh{A},1})$ defined by 
\[
\V{\Sh{H}^{\sharp}_{\Sh{A},1}}{s}(\Spf R') = \{f \in \mathbb{V}(\Sh{H}^{\sharp}_{\Sh{A},1})(\Spf R')\,|\, (f \text{ mod }p^{n-v\frac{p^n}{p-1}})(s) = 1\}.
\]
Let $\mathbb{V}({\Os})$ and $\V{\Os}{s}$ be defined analogously. One sees that there is a natural action of the formal group $1+p^{n-v\frac{p^n}{p-1}}\Ga$ on both $\V{\Hs}{s}$ and $\V{\Os}{s}$, where an element $t \in 1+p^{n-v\frac{p^n}{p-1}}\Ga$ acts by sending a linear functional $f$ to $tf$.
\begin{prop}
    $\V{\Hs}{s}$ and $\V{\Os}{s}$ are reprsentable by geometric vector bundles obtained as admissible blow-ups of $\mathbb{V}(\Hs)$ and $\mathbb{V}(\Os)$ respectively. Over a Zariski open $\Spf R' \subset \mathfrak{IG}_{v, n}$, where $\Hs$ and $\Os$ admit trivialisation with a basis compatible with the Hodge filtration $X,Y$, such that $X$ is a lift of the marked section, we have fibre diagrams as follows.
    \begin{equation}\begin{tikzcd}[column sep=huge]
	{\mathfrak{IG}_{v, n}} & {\mathbb{V}(\Hs)} & {\V{\Hs}{s}} \\
	{\Spf R'} & {\Spf R'\langle X,Y \rangle} & {\Spf R'\langle Z,Y \rangle}
	\arrow[from=1-2, to=1-1]
	\arrow[from=1-3, to=1-2]
	\arrow[from=2-1, to=1-1]
	\arrow[from=2-2, to=1-2]
	\arrow[from=2-2, to=2-1]
	\arrow[from=2-3, to=1-3]
	\arrow["{X \mapsto 1+p^{n-v\frac{p^n}{p-1}}Z}"', from=2-3, to=2-2]
\end{tikzcd}\end{equation}
    \begin{equation}\begin{tikzcd}[column sep=huge]
	{\mathfrak{IG}_{v, n}} & {\mathbb{V}(\Os)} & {\V{\Os}{s}} \\
	{\Spf R'} & {\Spf R'\langle X \rangle} & {\Spf R'\langle Z \rangle}
	\arrow[from=1-2, to=1-1]
	\arrow[from=1-3, to=1-2]
	\arrow[from=2-1, to=1-1]
	\arrow[from=2-2, to=1-2]
	\arrow[from=2-2, to=2-1]
	\arrow[from=2-3, to=1-3]
	\arrow["{X \mapsto 1+p^{n-v\frac{p^n}{p-1}}Z}"', from=2-3, to=2-2]
\end{tikzcd}\end{equation}
\end{prop}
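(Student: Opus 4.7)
The plan is to reduce to a local computation on $\mathfrak{IG}_{v, n}$ and exhibit the blow-up structure explicitly. First I would fix a Zariski-localizing affine $\Spf R' \subset \mathfrak{IG}_{v, n}$ together with a basis $X, Y$ of $\Hs$ adapted to the Hodge filtration (so $X \in \Os$) such that $X$ lifts the marked section $s$ modulo $p^{n-v\frac{p^n}{p-1}}$. Such a lift exists after further localization using the surjection $\Os \twoheadrightarrow \Os/p^{n-v\frac{p^n}{p-1}}$ and then completing to a basis of $\Hs$ via the Hodge filtration. In the dual coordinates $\mathbb{V}(\Hs) = \Spf R'\langle X, Y\rangle$, and an $R''$-point corresponds to a linear functional $f$ with $f(X) = a$, $f(Y) = b$. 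The condition $(f \bmod p^{n-v\frac{p^n}{p-1}})(s) = 1$ then reads $a \in 1 + p^{n-v\frac{p^n}{p-1}} R''$ with $b$ unconstrained; substituting $Z = (a-1)/p^{n-v\frac{p^n}{p-1}}$ identifies $\V{\Hs}{s}|_{\Spf R'}$ with $\Spf R'\langle Z, Y\rangle$ and the embedding into $\mathbb{V}(\Hs)$ takes the asserted form $X \mapsto 1 + p^{n-v\frac{p^n}{p-1}} Z$, $Y \mapsto Y$.

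Second, I would recognize this local presentation as an admissible formal blow-up: $\Spf R'\langle Z, Y\rangle$ is the normalization of the chart of the admissible blow-up of $\mathbb{V}(\Hs)|_{\Spf R'}$ along the open ideal $I = (X - 1,\ p^{n-v\frac{p^n}{p-1}})$ on which $I$ becomes generated by $p^{n-v\frac{p^n}{p-1}}$; the coordinate $Z = (X-1)/p^{n-v\frac{p^n}{p-1}}$ provides the normalization. This uses the same admissible-formal-blow-up formalism that produced $\mathfrak{X}_v$ from $\mathfrak{X}$ earlier in the paper. The argument for $\V{\Os}{s}$ is identical after dropping the $Y$-coordinate, yielding $\Spf R'\langle Z\rangle$ with the claimed embedding.

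Finally, to globalize I would glue the local models. Since the subfunctors are defined intrinsically as subfunctors of $\mathbb{V}(\Hs)$ and $\mathbb{V}(\Os)$, the local charts automatically agree on overlaps, so representability is a local question. Independence of the choice of lift of $s$ is immediate: replacing $X$ by $X' = X + p^{n-v\frac{p^n}{p-1}}\xi$ leaves $f(X') \equiv f(X) \pmod{p^{n-v\frac{p^n}{p-1}}}$ for every $f$, so the defining congruence is unchanged. The technically more delicate step — and the main obstacle I anticipate — is verifying that the transition functions between two such local presentations act on the coordinate $Z$ exactly through the group $1 + p^{n-v\frac{p^n}{p-1}}\Ga$, so that the global object is a torsor for this formal group (and in particular a geometric vector bundle). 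This boils down to a direct calculation showing that changes of trivialization of $\Os$ intertwine correctly with the Hodge--Tate marking, in the spirit of the vector bundles with marked sections formalism of \cite{andreatta2021triple}.
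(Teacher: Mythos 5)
Your argument is correct, and it is essentially the argument the paper relies on: the paper's own ``proof'' is just a citation to \cite{kazi2024twistedtripleproductpadic} (following the vector-bundles-with-marked-sections formalism of \cite{andreatta2021triple}), and what is carried out there is exactly your local computation — choose a basis $X,Y$ adapted to the Hodge filtration with $X$ lifting $s$, observe that the condition $(f \bmod p^{n-v\frac{p^n}{p-1}})(s)=1$ becomes $f(X)\in 1+p^{n-v\frac{p^n}{p-1}}\Ga$, and identify the resulting chart $R'\langle Z,Y\rangle$, $X\mapsto 1+p^{n-v\frac{p^n}{p-1}}Z$, with the relevant chart of the admissible blow-up along $(X-1,\,p^{n-v\frac{p^n}{p-1}})$; gluing is automatic because the subfunctor is defined intrinsically. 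One small remark: the ``delicate step'' you flag at the end (that transitions act through $1+p^{n-v\frac{p^n}{p-1}}\Ga$) is not actually needed for this proposition — since $\V{\Hs}{s}$ and $\V{\Os}{s}$ are intrinsic subfunctors, the local charts glue without any compatibility check, and the group action used later is likewise defined intrinsically by $f\mapsto tf$, so no transition-function computation enters here.
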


\begin{proof}
    See \cite[31]{kazi2024twistedtripleproductpadic}.
\end{proof}

\begin{prop}\label{P1015}
    Let $\rho\colon \V{\Hs}{s} \to \mathfrak{IG}_{v, n}$ be the map as above. Let $\Sh{N}^{(\kappa,0;0)}_{\mathfrak{IG}_{v, n}} := \rho_*\Sh{O}_{\V{\Hs}{s}}[\kappa]$ be the subsheaf of $\rho_*\Sh{O}_{\V{\Hs}{s}}$ on which the formal group $1+p^{n-v\frac{p^n}{p-1}}\Ga$ acts via the character $\kappa$. Then $\Sh{N}^{(\kappa,0;0)}_{\mathfrak{IG}_{v, n}}$ is a Banach sheaf representing the class $\Sym^{\kappa}([\Hs])$ as above. Similarly, letting $\nu \colon \V{\Os}{s} \to \mathfrak{IG}_{v, n}$ be the structure morphism as above, the sheaf $\Omega^{(\kappa,0;0)}_{\mathfrak{IG}_{v, n}} := \nu_*\Sh{O}_{\V{\Os}{s}}[\kappa]$ represents the class in $\check{H}^1(\mathfrak{IG}_{v, n}, \Gm)$ of the image of $[\Os]$ under the map $\Sym^{\kappa} \colon \check{H}^1(\mathfrak{IG}_{v, n}, 1+p^{n-v\frac{p^n}{p-1}}\Ga) \xrightarrow{\kappa} \check{H}^1(\mathfrak{IG}_{v, n}, \Gm)$.

    Moreover, locally on a Zariski open $\Spf R'$ as above where $\Hs$ and $\Os$ both admit trivialisation, we have
    \[
    \Sh{N}^{(\kappa,0;0)}_{\mathfrak{IG}_{v, n}|\Spf R'} = R'\left\langle \frac{Y}{1+p^{n-v\frac{p^n}{p-1}}Z}\right\rangle \cdot (1+p^{n-v\frac{p^n}{p-1}}Z)^{\kappa}, \quad \Omega^{(\kappa,0;0)}_{\mathfrak{IG}_{v, n}|\Spf R'} = R'\cdot (1+p^{n-v\frac{p^n}{p-1}}Z)^{\kappa}.
    \]
\end{prop}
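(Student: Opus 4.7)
The plan is to reduce the statement to a direct local computation, using the explicit affine charts of $\V{\Hs}{s}$ and $\V{\Os}{s}$ already recorded, and then to match the resulting gluing data against the explicit description of the group-scheme homomorphism defining $\Sym^{\kappa}$.

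First, I would work on a Zariski open $\Spf R' \subset \mathfrak{IG}_{v, n}$ where $\Hs$ and $\Os$ admit Hodge-filtration-compatible trivialisations $\{X, Y\}$ and $\{X\}$ respectively, with $X$ lifting the marked section $s$. By the preceding proposition, $\V{\Hs}{s}|_{\Spf R'} \simeq \Spf R'\langle Z, Y\rangle$, the map to $\mathbb{V}(\Hs)$ being given by $X \mapsto 1+p^{n-v\frac{p^n}{p-1}}Z$; similarly $\V{\Os}{s}|_{\Spf R'} \simeq \Spf R'\langle Z\rangle$. The formal group $1+p^{n-v\frac{p^n}{p-1}}\Ga$ acts on a linear functional $f$ by $f \mapsto tf$, which in $(Z, Y)$-coordinates translates to $1+p^{n-v\frac{p^n}{p-1}}Z \mapsto t\cdot(1+p^{n-v\frac{p^n}{p-1}}Z)$ and $Y \mapsto tY$.

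Second, I would identify the $\kappa$-isotypic subsheaf. Since $\kappa$ is $r$-analytic with $r \le n - v\frac{p^n}{p-1}$, every section $f \in R'\langle Z, Y\rangle$ satisfying $t \cdot f = \kappa(t) f$ can be written uniquely as
\[
f = (1 + p^{n-v\frac{p^n}{p-1}}Z)^{\kappa} \cdot g\!\left(\tfrac{Y}{1+p^{n-v\frac{p^n}{p-1}}Z}\right), \qquad g \in R'\langle T\rangle,
\]
by change of variable $T = Y/(1 + p^{n-v\frac{p^n}{p-1}}Z)$ (which is valid because the denominator is a unit in the Tate algebra). This yields the displayed local expressions for $\Sh{N}^{(\kappa,0;0)}_{\mathfrak{IG}_{v,n}}$ and, restricting to the sub-bundle $\V{\Os}{s}$ corresponding to $Y = 0$, the formula for $\Omega^{(\kappa,0;0)}_{\mathfrak{IG}_{v,n}}$. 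The exhaustive filtration $\Fil_i\Sh{N}^{(\kappa,0;0)}_{\mathfrak{IG}_{v,n}}$ is given by the condition $\deg_T g \le i$, making the sheaf a $p$-adically complete filtered colimit of locally free sheaves, i.e. Banach in the required sense, with $\Fil_0 = \Omega^{(\kappa,0;0)}_{\mathfrak{IG}_{v,n}}$.

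Third, I would identify the \v{C}ech class. On an overlap, a change of Hodge-compatible trivialisation of $\Hs$ is a cocycle valued in $\Sh{B}_2^{n-v\frac{p^n}{p-1}, 0}$; computing how the replacement $X' = aX$, $Y' = p^{n-v\frac{p^n}{p-1}}bX + cY$ (with $a \in 1 + p^{n-v\frac{p^n}{p-1}}\Ga$, $b \in \Ga$, $c \in \Gm$) transforms the pair $\bigl((1+p^{n-v\frac{p^n}{p-1}}Z)^{\kappa}, T\bigr)$ gives exactly the image of this cocycle under the infinite-dimensional group-scheme homomorphism described in the lemma preceding the definition, where the unipotent part acts via the substitution $T \mapsto T + b/a$ (with appropriate $p$-adic scaling) and the torus part acts by $\kappa$ on $a$ and by the power series of $c/a$ on $T$. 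This is the map $\Sym^{\kappa}$, so the class of $\Sh{N}^{(\kappa,0;0)}_{\mathfrak{IG}_{v,n}}$ is $\Sym^{\kappa}([\Hs])$; restricting to the top-left block yields $\Sym^{\kappa}([\Os])$ for $\Omega^{(\kappa,0;0)}_{\mathfrak{IG}_{v,n}}$.

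The main obstacle is this last bookkeeping step: verifying that the transition formulas coming from changing the basis $(X, Y)$ in $\Hs$ agree, after taking the $\kappa$-isotypic part of $\rho_*\Sh{O}_{\V{\Hs}{s}}$, with the explicit matrix formulas for $\Sym^{\kappa}$ recorded earlier. Everything else (existence of the affine charts, convergence of the binomial series $(1+p^{n-v\frac{p^n}{p-1}}Z)^{\kappa}$, Banach-ness, and compatibility with the filtration) is essentially formal once the local model is in place, and largely parallels the modular curve arguments of \cite{andreatta2021triple} and \cite{kazi2024twistedtripleproductpadic}.
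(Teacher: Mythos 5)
Your argument is correct and is essentially the same as the paper's, which simply delegates this computation to \cite[Corollary 4.37]{kazi2024twistedtripleproductpadic}: the local charts $\Spf R'\langle Z,Y\rangle$, the identification of the $\kappa$-isotypic part via the invariant variable $T = Y/(1+p^{n-v\frac{p^n}{p-1}}Z)$ and the unit $(1+p^{n-v\frac{p^n}{p-1}}Z)^{\kappa}$, and the matching of transition cocycles with the explicit $\Sym^{\kappa}$ homomorphism are exactly the content of that reference. One cosmetic point: since $[\Hs]$ lives in $\check{H}^1(\mathfrak{IG}_{v,n},\Sh{B}_2^{n-v\frac{p^n}{p-1},0})$, the off-diagonal entry of a transition matrix lies in $\Ga$ rather than $p^{n-v\frac{p^n}{p-1}}\Ga$ as you wrote, but your verification only uses integrality of $b/a$ and $c/a$, so nothing is affected.
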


\begin{proof}
    The first part follows because the algebraic definition is just a reformulation of the geometric definition. The second part is proved in \cite[Corollary 4.37]{kazi2024twistedtripleproductpadic}.
\end{proof}

\subsection{Comparison with classical sheaves}

\begin{cor}
    Over the generic fibre $\Sh{IG}_{v, n}$, the specialisation of $\Fil_i\Sh{N}^{(\kappa,0;0)}_{\mathfrak{IG}_{v, n}}$ to the weight $\kappa = i$ is canonically isomorphic to $\Sym^i\Sh{H}_{\Sh{A},1}$.
\end{cor}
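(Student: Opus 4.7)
The plan is to combine two observations with the explicit local model of Proposition \ref{P1015}. First, on the generic fibre the inclusion $\Hs \hookrightarrow \Sh{H}_{\Sh{A},1}$ is an isomorphism, since the two integral refinements differ only by bounded $p$-power scaling in the second graded piece of the Hodge filtration. Consequently the cocycle $[\Hs] \in \check{H}^1(\mathfrak{IG}_{v,n}, \Sh{B}_2^{n-v\frac{p^n}{p-1},0})$ specialises generically to the cocycle of $(\Sh{H}_{\Sh{A},1}, \Fil^\bullet)$ in $\check{H}^1(\Sh{IG}_{v,n}, \Sh{B}_2)$. Second, the group scheme map $\Sh{B}_2^{n-v\frac{p^n}{p-1},0} \to \Sh{B}_{m+1}$ defining $\Sym^\kappa$, specialised at $\kappa = i$ and composed with the projection $\Sh{B}_{m+1} \to \Sh{B}_{i+1}$ that extracts $\Fil_i$, is immediately seen from the explicit formulas (diagonal entries $a^{i-j}c^j$ and unipotent part the binomial matrix) to coincide with the representation-theoretic $\Sym^i \colon \Sh{B}_2 \to \Sh{B}_{i+1}$ of the lemma preceding the Definition. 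Together, these two facts identify the isomorphism class of $\Fil_i\Sh{N}^{(\kappa,0;0)}_{\mathfrak{IG}_{v,n}}|_{\kappa=i}$ with that of $\Sym^i\Sh{H}_{\Sh{A},1}$ on the generic fibre.

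To upgrade this to a canonical isomorphism, I would use the explicit local model of Proposition \ref{P1015}. On a trivialising open $\Spf R'$ with basis $\{X, Y\}$ of $\Hs$ in which $X$ lifts the marked section, we have
\[
\Fil_i\Sh{N}^{(\kappa,0;0)}_{\mathfrak{IG}_{v,n}}|_{\Spf R'} = \bigoplus_{j=0}^i R' \cdot \left(\frac{Y}{1+p^{n-v\frac{p^n}{p-1}}Z}\right)^j (1+p^{n-v\frac{p^n}{p-1}}Z)^\kappa.
\]
Setting $\kappa = i$ and identifying $X = 1+p^{n-v\frac{p^n}{p-1}}Z$ on the generic fibre, the right-hand side becomes a free $R'[1/p]$-module on $\{X^{i-j}Y^j\}_{j=0}^i$, which is precisely the local basis of $\Sym^i\Sh{H}_{\Sh{A},1}|_{\Spf R'}$. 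The local isomorphisms glue because, by the second observation above, any change of trivialisation in $\Sh{B}_2(R'[1/p])$ acts on both sides through its image under $\Sym^i$.

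The main obstacle is canonicity, in particular independence from the (non-unique) choice of lift of the marked section. On the generic fibre the marked-section structure becomes inessential: any nonzero section of $\Os[1/p]$ is an equally valid marking, and rescaling $X$ by a unit $u$ rescales both $X^\kappa$ in the source model and $X^{i-j}Y^j$ in the target uniformly by $u^i$. An analogous direct computation for unipotent changes $Y \mapsto Y + \alpha X$, expanding $(Y/X + \alpha)^j$ and comparing with the $\Sym^i$ action on $X^{i-j}Y^j$, confirms compatibility. Hence the local identifications glue to a well-defined, canonical isomorphism over $\Sh{IG}_{v,n}$.
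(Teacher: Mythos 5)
Your argument is correct and is essentially the paper's own proof: specialising $\kappa$ to $i$ identifies $\Fil_i\Sh{N}^{(\kappa,0;0)}_{\mathfrak{IG}_{v,n}}$ with $\Sym^i\Hs$ (which you make explicit via the local model of Proposition \ref{P1015}), and $\Hs = \Sh{H}_{\Sh{A},1}$ over the generic fibre since $\underline{\Ha}_1$ and $p$ are invertible there. The only quibble is cosmetic: under $X \mapsto uX$ the basis element $V^jX^i$, resp.\ $X^{i-j}Y^j$, scales by $u^{i-j}$ rather than uniformly by $u^i$, but the two sides still transform identically, so your gluing argument stands.
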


\begin{proof}
    It is clear that the specialisation of $\kappa$ to $i$ gives a canonical identification $\Fil_i\Sh{N}^{(\kappa=i,0;0)}_{\mathfrak{IG}_{v, n}} \simeq \Sym^i\Hs$. But this is naturally isomorphic to $\Sym^i\Sh{H}_{\Sh{A},1}$ since $\Hs = \Sh{H}_{\Sh{A},1}$ over the generic fibre.
\end{proof}

We now descend the sheaves just constructed to the formal scheme $\mathfrak{M}_v$. Since $\mathfrak{IG}_{v, n}$ is the normalisation of $\mathfrak{M}_v$ in $\Sh{IG}_{v, n}$, the Galois action of $\Z_p^{\times}$ acting through the finite quotient $(\Z/p^n\Z)^{\times}$ on $\Sh{IG}_{v, n} \to \Sh{M}_v$ extends to an action on $f\colon \mathfrak{IG}_{v, n} \to \mathfrak{M}_v$. Moreover, this action induces an action on both $\omega^{\sharp}_{\Sh{A},1}$ and $\Sh{H}^{\sharp}_{\Sh{A},1}$ which can be described as follows. For an element $t \in \Z_p^{\times}$, the map $[t] \colon \mathfrak{IG}_{v, n} \to \mathfrak{IG}_{v, n}$ sends a generator $P \in H^{\can, D}_n(R') \mapsto \bar{t}^{-1}P$ for any $R$-algebra $R'$ over the generic fibre. Here $\bar{t}$ is the class of $t$ in $(\Z/p^n\Z)^{\times}$. We have a canonical identification given by the identity map $[t]^*\Sh{H}^{\sharp}_{\Sh{A},1} \xrightarrow{\id} \Sh{H}^{\sharp}_{\Sh{A},1}$, which sends the marked section $[t]^*HT_v(P) \mapsto \bar{t}^{-1}HT_v(P)$. Define the action of $t \in \Z_p^{\times}$ on $\Sh{H}^{\sharp}_{\Sh{A},1}$ as: 
\[
\Sh{H}^{\sharp}_{\Sh{A},1} \xrightarrow{[t]^*} [t]^*\Sh{H}^{\sharp}_{\Sh{A},1} \xrightarrow{t\cdot \id} \Sh{H}^{\sharp}_{\Sh{A},1}.\]
Note that this map sends the marked section of $[t]^*\Sh{H}^{\sharp}_{\Sh{A},1}$ to the marked section of $\Sh{H}^{\sharp}_{\Sh{A},1}$. Hence by the formalism of vector bundles with marked sections \cite[Definition 2.3]{andreatta2021triple}, we get a map $[t]^*\Sh{N}^{(\kappa, 0;0)}_{\mathfrak{IG}_{v, n}} \to \Sh{N}^{(\kappa, 0;0)}_{\mathfrak{IG}_{v, n}}$ which respects the filtration. This defines the descent datum on $\Sh{N}^{(\kappa,0;0)}_{\mathfrak{IG}_{v, n}}$.

\begin{defn}
    Define the sheaf $\Sh{N}^{(\kappa,0;0)}_{\mathfrak{M}_v}$ on $\mathfrak{M}_v$ as $f_*\Sh{N}^{(\kappa,0;0)}_{\mathfrak{IG}_{v, n}}[\kappa]$, i.e. the subsheaf of $f_*\Sh{N}^{(\kappa,0;0)}_{\mathfrak{IG}_{v, n}}$ on which $\Z_p^{\times}$ acts via the character $\kappa$. Define $\Omega^{(\kappa,0;0)}_{\mathfrak{M}_v}$ to be $\Fil_0\Sh{N}^{(k,0;0)}_{\mathfrak{M}_v}$ (which is well-defined because the $\Z_p^{\times}$ action respects the filtration on $\Sh{N}^{(\kappa,0;0)}_{\mathfrak{IG}_{v, n}}$.)
\end{defn}

\begin{cor}
    The sheaf $\Omega^{(\kappa,0;0)}_{\Sh{M}_v}$ is a line bundle, and $\Sh{N}^{(\kappa,0;0)}_{\Sh{M}_v}$ is a completed colimit of its filtered pieces which are locally free.
\end{cor}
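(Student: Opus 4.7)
The plan is to reduce to the explicit local computation of Proposition~\ref{P1015} on the Igusa cover and then apply descent along $f: \mathfrak{IG}_{v, n} \to \mathfrak{M}_v$, isotypic component by isotypic component. First I would fix a formal affine open $\Spf R \subset \mathfrak{M}_v$ small enough that (a) $\Os$ and $\Hs$ admit trivializations over $\Spf R' := \mathfrak{IG}_{v, n} \times_{\mathfrak{M}_v} \Spf R$ by bases compatible with the Hodge filtration in which the first basis vector is a lift of the marked Hodge--Tate section, and (b) the cover $\Spf R' \to \Spf R$ is Galois with group $G := (\Z/p^n\Z)^{\times}$. By Proposition~\ref{P1015}, over $\Spf R'$ we have
\[
 \Omega^{(\kappa,0;0)}_{\mathfrak{IG}_{v, n}}|_{\Spf R'} = R' \cdot e_\kappa, \qquad \Sh{N}^{(\kappa,0;0)}_{\mathfrak{IG}_{v, n}}|_{\Spf R'} = R'\langle W \rangle \cdot e_\kappa,
\]
with $e_\kappa := (1+p^{n-v\frac{p^n}{p-1}}Z)^{\kappa}$ and $W := Y/(1+p^{n-v\frac{p^n}{p-1}}Z)$, and filtration $\Fil_i = \bigoplus_{j=0}^{i} R' \cdot W^{j} e_\kappa$.

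Next I would trace the twisted $\Z_p^{\times}$-descent action through these local formulas. By construction, the action $[t]^{*} \circ (t \cdot \id)$ is designed so as to preserve the marked section of $\Os$; in the chosen trivialization this forces the dual coordinate $X = 1+p^{n-v\frac{p^n}{p-1}}Z$ to transform as $X \mapsto t \cdot X$ (the factor of $t$ cancelling the rescaling $P \mapsto \bar{t}^{-1} P$ of the generator of $H^{\can,D}_{n}$ which underlies $[t]$). Raising to the $\kappa$-th power yields $\sigma \cdot e_\kappa = \kappa(\sigma) e_\kappa$, while the ratio $W = Y/X$ is manifestly Galois-invariant since $Y$ and $X$ both transform by the scalar $t$. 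Taking $\kappa$-isotypic subsheaves of $f_{*}$ and using that this projection together with $p$-adic completion commutes with filtered colimits, I obtain locally
\[
 \Omega^{(\kappa,0;0)}_{\mathfrak{M}_v}|_{\Spf R} = R \cdot e_\kappa, \qquad \Fil_i \Sh{N}^{(\kappa,0;0)}_{\mathfrak{M}_v}|_{\Spf R} = \bigoplus_{j=0}^{i} R \cdot W^{j} e_\kappa,
\]
which gives both statements on $\mathfrak{M}_v$, and hence on $\Sh{M}_v$ by passage to the rigid generic fibre.

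The main obstacle will be the equivariance claim $\sigma \cdot e_\kappa = \kappa(\sigma) e_\kappa$: although conceptually transparent (the descent datum was engineered precisely to preserve the marked section), it demands carefully unwinding the interaction between the Hodge--Tate map, the rescaling of generators of $H^{\can,D}_{n}$ by $\bar{t}^{-1}$, and the explicit coordinates of Proposition~\ref{P1015}. A secondary point is that the $\kappa$-isotypic projection is well-defined and continuous at the integral level; for this one uses that $\kappa$ extends to an analytic character on $1+p^{r}\Ga$ with $r \le n - v\tfrac{p^n}{p-1}$, so that the formal/infinitesimal action already used in the definition of $\Sh{N}^{(\kappa,0;0)}_{\mathfrak{IG}_{v, n}}$ is compatible with the subsequent finite $G$-descent.
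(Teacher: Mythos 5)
There is a genuine gap, and it sits exactly where you anticipated (``the main obstacle''), but it is more serious than a bookkeeping issue. First, your explicit transformation formulas are not justified: the trivialising basis $X, Y$ of $\Hs$ over $\Spf R'$ is \emph{not} Galois-equivariant. The descent action of $t$ is the canonical identification $[t]^*\Hs \xrightarrow{\id} \Hs$ followed by $t\cdot\id$, and what this preserves is only the marked section \emph{modulo} $p^{n-v\frac{p^n}{p-1}}$ and the Hodge filtration; the chosen lift $X$ of the marked section and the complementary vector $Y$ have no reason to transform by the exact scalar $t$. Consequently $\sigma\cdot e_\kappa$ is $\kappa(\sigma)e_\kappa$ only up to a $\kappa$-power of a unit congruent to $1$ mod $p^{n-v\frac{p^n}{p-1}}$ (this is where $r\le n-v\frac{p^n}{p-1}$ and analyticity of $\kappa$ enter), and $W=Y/X$ is not literally invariant. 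So the local identification $\Fil_i\Sh{N}^{(\kappa,0;0)}_{\mathfrak{M}_v}|_{\Spf R}=\bigoplus_j R\cdot W^j e_\kappa$ does not follow from the computation you sketch.

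Second, and more fundamentally, you are trying to prove the \emph{integral} statement over $\mathfrak{M}_v$ by naive isotypic-component descent, but $\mathfrak{IG}_{v,n}\to\mathfrak{M}_v$ is only the normalisation of $\mathfrak{M}_v$ in $\Sh{IG}_{v,n}$: it is not finite \'etale (it is ramified away from the ordinary locus), the cover is not a $(\Z/p^n\Z)^{\times}$-torsor at the formal level, and since $p$ divides the order of the group and is not invertible in $R$, there is no averaging idempotent. Hence local freeness of the $\kappa$-isotypic part of $f_*$ is precisely the hard point; the paper does not reprove it here but cites \cite[Theorem 4.47]{kazi2024twistedtripleproductpadic} for it. For the corollary as actually stated — which concerns the sheaves over the rigid space $\Sh{M}_v$, not $\mathfrak{M}_v$ — the paper's argument is much shorter: over the generic fibre $\Sh{IG}_{v,n}\to\Sh{M}_v$ is finite \'etale Galois (and $p$ is invertible), so the line-bundle and local-freeness statements descend directly from the corresponding facts over $\Sh{IG}_{v,n}$ furnished by Proposition \ref{P1015}. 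If you restrict your strategy to the generic fibre it becomes correct and close to the paper's proof; as written, the integral claims are unproved and cannot be obtained by the route you describe.
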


\begin{proof}
    The claim is easy to see over the generic fibre. Since the map $\Sh{IG}_{v, n} \to \Sh{M}_v$ is finite \'{e}tale, the result follows from the corresponding facts about $\Omega^{(\kappa,0;0)}_{\Sh{IG}_{v, n}}$ and $\Sh{N}^{(\kappa,0;0)}_{\Sh{IG}_{v, n}}$. The stronger statement over the formal schemes requires more involved argument and is proved in \cite[Theorem 4.47]{kazi2024twistedtripleproductpadic}.
\end{proof}

We will now descend the sheaves $\Omega^{(2\kappa,0;0)}_{\mathfrak{M}_v}$ and $\Sh{N}^{(2\kappa,0;0)}_{\mathfrak{M}_v}$ to the Shimura variety $\mathfrak{X}_v$ by describing the action of $F^{\times, (p)}_+$. Consider the map $[x] \colon \mathfrak{M}_v \to \mathfrak{M}_v$ sending $(A,\iota,\lambda,\alpha_{K'}) \mapsto (A, \iota, x\lambda, \alpha_{K'})$. The identity $\id \colon [x]^*\Sh{H}^{\sharp}_{\Sh{A},1} \to \Sh{H}^{\sharp}_{\Sh{A},1}$ sends the marked section to the marked section, and hence induces an isomorphism $\id \colon [x]^*\Sh{N}^{(2\kappa,0;0)}_{\mathfrak{M}_v} \to \Sh{N}^{(2\kappa,0;0)}_{\mathfrak{M}_v}$. This defines an action of $F^{\times, (p)}_+$ on $\Sh{N}^{(2\kappa,0;0)}_{\mathfrak{M}_v}$ which respects the filtration.

\begin{defn}
    Let $g \colon \mathfrak{M}_v \to \mathfrak{X}_v$ be the projection. Define $\Sh{N}^{(2\kappa,0;0)}$ as a sheaf over $\mathfrak{X}_v$ to be $g_*\Sh{N}^{(2\kappa,0;0)}_{\mathfrak{M}_v}[\kappa^{-1}]$, i.e. the subsheaf of $g_*\Sh{N}^{(2\kappa,0;0)}_{\mathfrak{M}_v}$ on which $F^{\times,(p)}_+$ acts via $\kappa^{-1}$. Define $\Omega^{(2\kappa,0;0)}$ to be $\Fil_0\Sh{N}^{(2\kappa,0;0)}$.
\end{defn}

\begin{rem}
    The sheaf $\Sh{N}^{(2\kappa,0;0)}$ over the generic fibre $\Sh{X}$ is thus a \emph{small, locally projective} Banach sheaf in the sense of \cite[Definition 2.5.10]{boxer2021highercolemantheory}.
\end{rem}

%%%%%%%%%%%%%%%%%%%%%%%%%%%%%%%%%%%%%%%%%

\section{Theory at level $X_0(\pp_1)$}\label{S3}

 We have so far worked on a Shimura variety of spherical level at $p$; but in order to understand Hecke operators, we shall need to compare with classical modular forms with non-trivial level at $\pp_1$.

 \subsection{Shimura varieties and models}

  For $K^{(p)}$ as above, we define
  \[ 
    K_0(\pp_1) \coloneqq K^{(p)} \cdot \operatorname{Iw}_{\pp_1} \cdot \GL_2(\Sh{O}_{F, \pp_2}), \quad\text{where}\quad 
    \operatorname{Iw}_{\pp_1} = \left\{ g \in \GL_2(\Sh{O}_{F,\pp_1}) \, | \, g \equiv \left(\begin{smallmatrix}
     \ast & \ast \\
     0 & \ast 
     \end{smallmatrix}\right) \text{ mod } \pp_1\right\}.
   \]
   We define Shimura varieties $X_0(\pp_1)$, $M_0(\pp_1)$, $M_0^{\mathfrak{c}}(\pp_1)$ analogously to the $X, M, M^{\mathfrak{c}}$ above, so $M_0(\pp_1)$ is a moduli space for $(A, \iota, \lambda, \alpha_{K^{(p)}})$ as before with the additional data of a finite flat degree $p$ subgroup-scheme $H \subset A[\pp_1]$. We can (and do) choose our compactification data so that there is a natural map $X_0(\pp_1)_{\Q} \to X_\Q$ extending the natural quotient map on the open Shimura varieties.

   These schemes are not smooth, but they are normal and relative local complete intersection over $\Spec \Z_{(p)}$, with smooth fibre over $\Q$ \cite{Pappas}.

\subsection{Fargues' degree on $X_0(\pp_1)$}

In \cite{Fargues10}, Fargues defines the degree of a finite flat commutative group scheme. Consider the degree function defined on $\Sh{X}_0(\pp_1)$ which sends a point $x = (A, \iota, \lambda, \alpha_{K^{(p)}}, H)$ to $\deg H \in [0,1]$. For any rational interval $[a,b] \subset \Q \cap [0,1]$, let $\Sh{X}_0(\pp_1)_{[a,b]} = \deg^{-1}([a,b])$. Then $\Sh{X}_0(\pp_1)_{[a,b]}$ is admissible open and quasi-compact. Let us denote by $\Sh{X}_0(\pp_1)_{[0, a[\, \cup\, ]b,1]}$ the complement $\Sh{X}_0(\pp_1)\setminus \Sh{X}_0(\pp_1)_{[a,b]}$. 

\begin{rem}
    In this parametrisation, the two extremal points $0, 1$ correspond to the locus where the universal subgroup $H$ is \'{e}tale and multiplicative respectively.
\end{rem}

\begin{theorem}\label{T105}
We have the following results relating the canonical subgroup and the degree function on $\Sh{X}_0(\pp_1):$
\begin{enumerate}
    \item For any point of $\Sh{X}_0(\pp_1)$ given by an abelian variety $A$ with additional structure, we have $\sum_{H' \subset A[\pp_1]} \deg H' = 1$. Moreover, either all the degrees are equal \emph{(}and hence equal to $1/(p+1)$\emph{)}, or there exists a canonical subgroup $H^{\can}_1$ with $\deg H^{\can}_1 > \tfrac{1}{(p+1)}$, and for all $H' \neq H^{\can}_1$ we have $\deg H' = \frac{1-\deg H^{\can}_1}{p} < \tfrac{1}{(p+1)}$.
    \item If $a < 1/(p+1)$, $\Sh{X}_0(\pp_1)_{[0,a]}$ admits a canonical subgroup $H^{\can}_1 \neq H$, and $\deg H = \Hdg_1/p$. Therefore, the natural map $\Sh{X}_0(\pp_1) \to \Sh{X}$ induces a map $\Sh{X}_0(\pp_1)_{[0,a]} \to \Sh{X}_{pa}$ in this case.
    \item If $a > 1/(p+1)$, then $\Sh{X}_0(\pp_1)_{[a,1]}$ admits a canonical subgroup and $H^{\can}_1 = H$, and $\deg H = 1 - \Hdg_1$ in this case. Therefore, we get a map $\Sh{X}_0(\pp_1)_{[a,1]} \to \Sh{X}_{1-a}$. 
\end{enumerate}
\end{theorem}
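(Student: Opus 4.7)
Proof Proposal:

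Since $p = \pp_1\pp_2$ is split in $F$, the decomposition $\Sh{O}_F \otimes \Z_p = \Z_p \times \Z_p$ induces a product decomposition of the $p$-divisible group of $A$, so that $A[\pp_1^{\infty}]$ is a $p$-divisible group of height $2$ and dimension $1$, and $A[\pp_1]$ is a BT1 of the same height and dimension. In particular $\omega_{A[\pp_1]}$ is canonically identified with $\omega_{\Sh{A},1}/p\omega_{\Sh{A},1}$, a rank-$1$ locally free $\Sh{O}_K/p$-module, which gives $\deg A[\pp_1] = 1$ in Fargues' normalisation. The whole situation is strictly parallel to the elliptic curve case, so the plan is to reduce each of (1), (2), (3) to the corresponding statements in \cite{Fargues10}.

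For part (1), the first step is the sum formula $\sum_{H' \subset A[\pp_1]} \deg H' = 1$, where the sum ranges over the $p+1$ order-$p$ subgroup schemes (extended integrally using scheme-theoretic closure). This is the main content of Fargues' analysis of BT1 groups of height $2$ and dimension $1$, and I would simply invoke it from \cite{Fargues10}. The dichotomy is a formal consequence: if the degrees are not all equal (and hence equal to $1/(p+1)$ by the sum formula), some subgroup $H_0$ has $\deg H_0 > 1/(p+1)$, and by the results of Fargues and Katz this subgroup is the canonical subgroup of the theorem, with $\deg H^{\can}_1 = 1 - \Hdg_1$. The remaining $p$ subgroups have common degree $\Hdg_1/p$, either by symmetry (they are transitively permuted by the stabiliser of $H^{\can}_1$ in the local automorphism group of $A[\pp_1]$) or directly from Fargues' computation; this is consistent with the sum formula $(1-\Hdg_1) + p \cdot \Hdg_1/p = 1$.

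Parts (2) and (3) then follow formally from (1). For (2), the bound $\deg H \le a < 1/(p+1)$ rules out the equalised case, so a canonical subgroup $H^{\can}_1$ exists; since $\deg H^{\can}_1 > 1/(p+1) > \deg H$, we have $H \ne H^{\can}_1$, and part (1) gives $\deg H = \Hdg_1/p$. Hence $\Hdg_1 = p \deg H \le pa$, so the forgetful map $\Sh{X}_0(\pp_1) \to \Sh{X}$ sends $\Sh{X}_0(\pp_1)_{[0,a]}$ into $\Sh{X}_{pa}$. For (3), the bound $\deg H \ge a > 1/(p+1)$ forces $H$ to be the unique canonical subgroup (since all non-canonical subgroups have degree $< 1/(p+1)$ by (1)), so $\deg H = 1 - \Hdg_1$ and $\Hdg_1 \le 1 - a$, producing the map $\Sh{X}_0(\pp_1)_{[a,1]} \to \Sh{X}_{1-a}$.

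The only non-formal inputs are the sum formula and the identification $\deg H^{\can}_1 = 1 - \Hdg_1$ in part (1), both due to Fargues; the remainder of the argument is a direct manipulation of these two facts together with the trivial inequality $\Hdg_1 \geq 0$.
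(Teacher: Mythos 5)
Your proposal is correct and takes essentially the same route as the paper, whose entire proof is a citation of \cite[Theorem 5.4]{BoxerPilloni_ModularCurve} and \cite[Theorem 6]{Fargues11}: since $p$ splits, everything reduces to the height-$2$, dimension-$1$ group $A[\pp_1^\infty]$, where those references give the sum formula, $\deg H^{\can}_1 = 1-\Hdg_1$ and $\deg H' = \Hdg_1/p$ for $H' \neq H^{\can}_1$, and parts (2)--(3) then follow by the formal manipulation you describe. One small caveat: your parenthetical ``symmetry'' justification for the equality of the non-canonical degrees is not valid in general (the automorphism group of $A[\pp_1]$ over $\Sh{O}_K$ is typically just scalars and does not permute the subgroups), but this is harmless since the direct computation of Fargues, which you also invoke, gives $\deg H' = \Hdg_1/p$ outright.
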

\begin{proof}
    See \cite[Theorem 5.4]{BoxerPilloni_ModularCurve} and \cite[Theorem 6]{Fargues11}.
\end{proof}

For $a<v/p$ or $a>1-v$, we have the maps $\Sh{X}_0(\pp_1)_{[0,a]} \to \Sh{X}_v$ and $\Sh{X}_0(\pp_1)_{[a,1]} \to \Sh{X}_v$ respectively (Theorem \ref{T105}). We can thus pull back $\Sh{N}^{(2\kappa,0;0)}$ and $\Omega^{(2\kappa,0;0)}$ along either of these morphisms to define interpolation sheaves of nearly overconvergent and overconvergent modular forms on these admissible opens inside $\Sh{X}_0(\pp_1)$.

\subsection{The \texorpdfstring{$U_{\pp_1}$}{Up1} operator}
For $a > 1-v$ as above we construct the action of the $U_{\pp_1}$ operator on the cohomology $R\Gamma\left(\Sh{X}_0(\pp_1)_{[a,1]}, \Sh{N}^{(2\kappa,0;0)}\right)$.

\subsubsection{The \texorpdfstring{$U_{\pp_1}$}{Up1} correspondence}
Let $\varpi_1 \in (\Sh{O}_F\otimes \Z_p) = \Z_p \times \Z_p$ be the element that is $p$ in the $\pp_1$ component and $1$ in the other. Let $x_1 \in F^{\times, +}$ be such that $\varpi_1/x_1 \in (\Sh{O}_F \otimes \Z_p)^{\times}$. Let $\Sh{M}_0(\pp_1^2) \subset \Sh{M}_0(\pp_1) \times \Sh{M}_0(\pp_1)$ be (a toroidal compactification) underlying the $U_{\pp_1}$ correspondence such that the projections $p_1, p_2$ are finite flat. Away from the boundary $\Sh{M}_0(\pp_1^2)$ parametrises tuples $(A,H,A',H')$ (along with other data that we ignore in the notation) with $\pi \colon A \to A'$ a cyclic $\pp_1$ isogeny such that $\ker \pi \cap H = 0$. We have $p_1(A,H,A',H') = (A,H)$. We have a slightly non-canonical convention for $p_2$. Note that if $(A, \iota, \lambda, \alpha_{K^{(p)}}, H)$ defines a point of $\Sh{M}_0^{\mathfrak{c}}(\pp_1)$, then $(A', \pi_*\iota, \pi_*\lambda, \pi_*\alpha_{K^{(p)}}, H')$ naturally defines a point in $\Sh{M}_0^{\pp_1\mathfrak{c}}(\pp_1)$. However, since in our definition of $\Sh{M}_0(\pp_1)$ we range over polarisation modules prime to $p$, we define $p_2(A, \iota, \lambda, \alpha_{K^{(p)}}, H) = (A', \pi_*\iota, x_1^{-1}\pi_*\lambda, \pi_*\alpha_{K^{(p)}}, H')$ which is naturally a point in $\Sh{M}_0^{x_1^{-1}\pp_1\mathfrak{c}}(\pp_1)$. We also let $\mathfrak{M}_0(\pp_1^2)$ be the normalisation of $\mathfrak{M}_0(\pp_1) \times \mathfrak{M}_0(\pp_1)$ in $\Sh{M}_0(\pp_1^2)$. 

Let $a$ be as above. Consider the isomorphism $\Sh{M}_0(\pp_1)_{[a,1]} \xrightarrow{\sim} \Sh{M}_{1-a}$, and let $\mathfrak{M}_0(\pp_1)_{[a,1]}$ be the normalisation of $\mathfrak{M}_{1-a}$ in $\Sh{M}_0(\pp_1)_{[a,1]}$. We note that the projection $\mathfrak{M}_0(\pp_1)_{[a,1]} \to \mathfrak{M}_{1-a}$ is also an isomorphism. 

\begin{lemma}
    For $a$ as above, $p_2(p_1^{-1}\mathfrak{M}_0(\pp_1)_{[a,1]}) \subset \mathfrak{M}_0(\pp_1)_{[1-\frac{1-a}{p}, 1]}$.
\end{lemma}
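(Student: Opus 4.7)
The plan is to work first on the generic fibre, where Fargues' degree function is defined, and then to pass to the formal models by a normalisation argument. Let $(A, H, A', H')$ be a geometric point of $p_1^{-1}\Sh{M}_0(\pp_1)_{[a,1]}$, with $\pi\colon A \to A'$ the cyclic $\pp_1$-isogeny attached to it; the implicit convention in the $U_{\pp_1}$-correspondence is that $H' = \pi(H)$. Since $\deg H \geq a > 1-v > \tfrac{1}{p+1}$ (using $v < 1/p^{n+1}$), Theorem~\ref{T105}(3) identifies $H = H^{\can}_1(A)$ and gives $\Hdg_1(A) = 1 - \deg H$. Setting $K := \ker\pi$, the condition $K \cap H = 0$ together with $|K| = p$ forces $K \neq H^{\can}_1(A)$, so by Theorem~\ref{T105}(1),
\[
 \deg K \;=\; \tfrac{1 - \deg H^{\can}_1(A)}{p} \;=\; \tfrac{1 - \deg H}{p} \;=\; \tfrac{\Hdg_1(A)}{p}.
\]

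The crux is then to compute $\deg H'$. From the relation $\check{\pi}\circ\pi = [\pp_1]_A$, which kills $A[\pp_1]$, I would deduce $\pi(A[\pp_1]) \subset \ker\check{\pi}$; as both are order-$p$ subgroup schemes of $A'[\pp_1]$, they must coincide, so $H' = \pi(H) = \ker\check{\pi}$. Using the $\mathfrak{c}$-polarisation on $A$ to identify the Cartier dual $K^D$ with $\ker\check{\pi}$, and invoking Fargues' identity $\deg G + \deg G^D = \operatorname{ht}(G)$ for an order-$p$ finite flat group scheme $G$, the conclusion is
\[
  \deg H' \;=\; \deg K^D \;=\; 1 - \deg K \;=\; 1 - \tfrac{1 - \deg H}{p} \;\geq\; 1 - \tfrac{1-a}{p},
\]
which is exactly the generic-fibre statement of the lemma.

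To pass to formal models, I would appeal to the universal property of normalisation: both $\mathfrak{M}_0(\pp_1)_{[a,1]}$ and $\mathfrak{M}_0(\pp_1)_{[1 - (1-a)/p,\,1]}$ are by construction the normalisations of $\mathfrak{M}_v$ (for appropriate $v$) in the relevant rigid-analytic loci, and $p_1, p_2$ are already defined integrally on $\mathfrak{M}_0(\pp_1^2)$; once the containment is verified generically, it propagates to the formal level automatically. The main expected obstacle is the clean identification $H' = \pi(H) = \ker\check{\pi}$ together with the application of Cartier duality through the $\mathfrak{c}$-polarisation; both are standard but need care at the integral level when $\mathfrak{c}$ is not principal. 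The rest of the argument is elementary degree arithmetic via Theorem~\ref{T105}.
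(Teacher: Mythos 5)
Your computation is correct and is exactly the ``simple computation using Theorem \ref{T105}'' that the paper (following the modular curve case in Boxer--Pilloni, Proposition 5.5) has in mind: identify $H=H^{\can}_1(A)$ via Theorem \ref{T105}(3), get $\deg K=\frac{1-\deg H}{p}$ from Theorem \ref{T105}(1), and compute $\deg H'=1-\deg K$ on the generic fibre, the formal statement following by normality/normalisation. (As a minor simplification, $\deg H' = 1-\deg K$ also follows directly from additivity of Fargues' degree in the exact sequence $0\to K\to A[\pp_1]\to H'\to 0$ with $\deg A[\pp_1]=1$, avoiding the Cartier-duality-via-polarisation step, but both are fine.)
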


\begin{proof}
    This follows from a simple computation using Theorem \ref{T105}. See also \cite[Proposition 5.5]{BoxerPilloni_ModularCurve}.
\end{proof}

In \cite[\S6]{kazi2024twistedtripleproductpadic} we showed that the Hecke correspondence $U_{\pp_1}$ over $\mathfrak{M}_{1-a}$ lifts to a correspondence on $\mathfrak{IG}_{v, n}$. Since the universal isogeny $\check{\pi} \colon p_1^*\Sh{A} \to p_2^*\Sh{A}$ has kernel disjoint from the canonical subgroup, Lemma \ref{L109} implies that $\check{\pi}^* \colon p_2^*\Sh{H}_{\Sh{A},1} \to p_1^*\Sh{H}_{\Sh{A},1}$ restricts to a map $\check{\pi}^* \colon p_2^*\Hs \to p_1^*\Hs$ that sends the marked section to the marked section. Thus, by functoriality it induces a map $\check{\pi}^* \colon p_2^*\Sh{N}^{(2\kappa,0;0)}_{\mathfrak{M}_{1-a}} \to p_1^*\Sh{N}^{(2\kappa,0;0)}_{\mathfrak{M}_{1-a}}$. Pulling back via the isomorphism $\mathfrak{M}_0(\pp_1)_{[a,1]} \simeq \mathfrak{M}_{1-a}$, we get a map $\check{\pi}^* \colon p_2^*\Sh{N}^{(2\kappa,0:0)}_{\mathfrak{M}_0(\pp_1)_{[a,1]}} \to p_1^*\Sh{N}^{(2\kappa,0;0)}_{\mathfrak{M}_0(\pp_1)_{[a,1]}}$.

\begin{defn}
    Let $C_{[a,1]} = p_1^{-1}(\Sh{M}_0(\pp_1)_{[a,1]})$. Define the $U_{\pp_1}$ operator on $R\Gamma(\Sh{M}_0(\pp_1)_{[a,1]}, \Sh{N}^{(2\kappa,0;0)})$ as 
    \[
    \begin{tikzcd}[]
    R\Gamma(\Sh{M}_0(\pp_1)_{[a,1]}, \Sh{N}^{(2\kappa,0;0)}) \ar[r] & R\Gamma(p_2(C_{[a,1]}), \Sh{N}^{(2\kappa,0;0)}) \ar[r, "p_2^*"] \ar[draw = none]{d}[name = X, anchor = center]{} & R\Gamma(C_{[a,1]}, p_2^*\Sh{N}^{(2\kappa,0;0)}) \ar[dl, rounded corners, to path = { -- ([xshift = 2ex]\tikztostart.east) |- (X.center) \tikztonodes -| ([xshift = -2ex]\tikztotarget.west) -- (\tikztotarget)}] & \\
    & R\Gamma(C_{[a,1]}, p_1^*\Sh{N}^{(2\kappa,0;0)}) \ar[r, "{(\star)\cdot{\text{tr}_{p_1}}}"] & R\Gamma(\Sh{M}_0(\pp_1)_{[a,1]}, \Sh{N}^{(2\kappa,0;0)})
\end{tikzcd}
    \]
    where $\star = (\varpi_1/x_1)^{(\kappa,0)}p^{-1}$.
\end{defn}

\begin{cor}
    The $U_{\pp_1}$ operator on $R\Gamma(\Sh{M}_0(\pp_1)_{[a,1]}, \Sh{N}^{(2\kappa,0;0)})$ descends to an operator on \\$R\Gamma(\Sh{X}_0(\pp_1)_{[a,1]}, \Sh{N}^{(2\kappa,0;0)})$.
\end{cor}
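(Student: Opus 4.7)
The strategy is to verify that each ingredient in the definition of $U_{\pp_1}$ -- the correspondence $\Sh{M}_0(\pp_1^2)$, the isogeny pullback $\check{\pi}^*$, the trace $\mathrm{tr}_{p_1}$, and the scalar twist $(\varpi_1/x_1)^{(\kappa,0)} p^{-1}$ -- is equivariant for the natural action of $F^{\times,(p)}_+$ that implements the descent from $\mathfrak{M}_0(\pp_1)$ to $\mathfrak{X}_0(\pp_1)$. Once equivariance is in place, it is formal that $U_{\pp_1}$ commutes with $[x]^*$ for every $x \in F^{\times,(p)}_+$, and therefore preserves the subcomplex cut out by the $\kappa^{-1}$-isotypic condition, which by definition is $R\Gamma(\Sh{X}_0(\pp_1)_{[a,1]}, \Sh{N}^{(2\kappa,0;0)})$.

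First I would extend the polarisation-scaling action of $F^{\times,(p)}_+$ to $\mathfrak{M}_0(\pp_1^2)$ by sending the universal object $(A, \iota, \lambda, \alpha, H, A', \pi, H')$ to the same data with $\lambda$ replaced by $x\lambda$. Because $p_2$ applies $x_1^{-1}\pi_*$ to the polarisation, the induced polarisation on $A'$ becomes $x_1^{-1}\pi_*(x\lambda) = x \cdot x_1^{-1}\pi_*\lambda$, so the action on the target factor is again $[x]$. Hence both $p_1$ and $p_2$ are equivariant, and the image $p_2(p_1^{-1}\mathfrak{M}_0(\pp_1)_{[a,1]}) \subset \mathfrak{M}_0(\pp_1)_{[1-\frac{1-a}{p},1]}$ inherits the same action.

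Next, the universal cyclic $\pp_1$-isogeny $\check{\pi}\colon p_1^*\Sh{A} \to p_2^*\Sh{A}$ does not involve the polarisation, so it is identified with itself under $[x]$ on either side. By Lemma \ref{L109}, the pullback $\check{\pi}^*\colon p_2^*\Hs \to p_1^*\Hs$ preserves the marked section; compatibility with the descent isomorphisms $\id\colon [x]^*\Hs \xrightarrow{\sim} \Hs$ is automatic, since all maps in sight are literally the identity on the underlying sheaves of $\Sh{O}_F \otimes R$-modules. Via Proposition \ref{P1015} this passes to an equivariant map on $\Sh{N}^{(2\kappa,0;0)}$. The trace $\mathrm{tr}_{p_1}$, being a finite flat pushforward, commutes with the $[x]$-action (which acts as the identity on underlying schemes), and the prefactor $(\varpi_1/x_1)^{(\kappa,0)} p^{-1}$ is a scalar, hence commutes with everything.

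The main subtlety I anticipate is bookkeeping around the polarisation twist in $p_2$: one must check that the insertion of $x_1^{-1}$ does not upset equivariance, and verify that the resulting operator is independent of the choice of $x_1$ (two valid choices differ by an element of $\Sh{O}_F^{\times,+}$, whose action on the descended sheaf is compensated exactly by the corresponding change in the prefactor $(\varpi_1/x_1)^{(\kappa,0)}$). Apart from this, the argument is pure functoriality and requires no further geometric or analytic input beyond what is already in place.
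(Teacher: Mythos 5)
Your proposal is correct and takes essentially the same route as the paper, whose proof is the one-line observation that every arrow in the definition of $U_{\pp_1}$ commutes with the action of $F^{\times,(p)}_+$ on the polarisation; your equivariance checks for the correspondence, $\check{\pi}^*$, the trace, and the scalar factor are exactly the content behind that assertion. The only (inessential) slip is the parenthetical claim that two admissible choices of $x_1$ differ by an element of $\Sh{O}_F^{\times,+}$: they differ by an element of $F^{\times,(p)}_+$ (a totally positive element that is a unit at the primes above $p$), but this does not affect the descent argument.
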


\begin{proof}
    This is clear because all the arrows in the definition above commute with the action of $F^{\times, (p)}_+$ on the polarisation.
\end{proof}

\begin{rem}
    In general, for the sheaf $\underline{\omega}^{(k_1,k_2;w)}$ of classical algebraic weight, the $U_{\pp_1}$ operator on $R\Gamma(X_0(\pp_1), \underline{\omega}^{(k_1,k_2;w)})$ is defined similarly using the correspdence described above, and with the normalisation factor $\star = (\varpi_1/x_1)^{(\frac{k_1-w}{2},0)}p^{-1}$. This operator is optimally integral in the sense of \cite{FakhruddinPilloni}. Our definition therefore implies that on $R\Gamma(\Sh{X}_0(\pp_1)_{[a,1]}, \Omega^{(2\kappa,0;0)})$, the $U_{\pp_1}$ operator interpolates the optimally integral $U_{\pp_1}$ operator.
\end{rem}

\begin{cor}\label{C1024}
    The $U_{\pp_1}$ operator on $R\Gamma(\Sh{X}_0(\pp_1)_{[a,1]}, \Sh{N}^{(2\kappa,0;0)})$ satisfies the following properties:
    \begin{enumerate}
        \item It respects the filtration on $\Sh{N}^{(2\kappa,0;0)}$.
        \item The induced map on $R\Gamma(\Sh{X}_0(\pp_1)_{[a,1]}, \Gr_n\Sh{N}^{(2\kappa,0;0)})$ has slope $\geq n\cdot v_p(\frac{p}{\Ha_1^{-(p+1)}})$.
    \end{enumerate} 
\end{cor}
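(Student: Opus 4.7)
The plan is to analyze each constituent of the composition defining $U_{\pp_1}$ and exploit the description of $\check\pi^*$ on the Hodge filtration given by Lemma \ref{L109}.

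For part (1), I would check that every arrow in the definition of $U_{\pp_1}$ preserves the filtration on $\Sh{N}^{(2\kappa,0;0)}$. The pullback $p_2^*$ and the pushforward $\mathrm{tr}_{p_1}$ are $\Sh{O}$-linear and compatible with filtered structure by functoriality, and the scalar $\star = (\varpi_1/x_1)^{(\kappa,0)} p^{-1}$ acts diagonally. The only substantive input is that $\check\pi^* \colon p_2^*\Hs \to p_1^*\Hs$ respects the Hodge filtration, which is precisely the content of Lemma \ref{L109}. Applying $\Sym^\kappa$ through the vector-bundle-with-marked-section construction of Proposition \ref{P1015} then yields a filtration-preserving map on $\Sh{N}^{(2\kappa,0;0)}$.

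For part (2), I would compute the induced operator on each graded piece. Locally, as in Proposition \ref{P1015}, $\Gr_n \Sh{N}^{(2\kappa,0;0)}$ is generated by a section of the form $X^{\kappa - n} Y^n$, where $X$ is a local lift of the marked section of $\omega^{\sharp}_{\Sh{A},1}$ and $Y$ is a local generator of the quotient $\underline{\Ha}_1^{p/(p-1)}\omega^{\vee}_{\Sh{A},1}$. By Lemma \ref{L109}, $\check\pi^*$ acts as an isomorphism on the $X$-direction and as multiplication by $p/\underline{\Ha}_1^{p+1}$ on the $Y$-direction, so by multiplicativity of $\Sym^{\kappa}$ the induced map on $\Gr_n$ is multiplication by $(p/\underline{\Ha}_1^{p+1})^n$ composed with an isomorphism of line bundles. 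Coupled with the observation that the normalized trace $\star \cdot \mathrm{tr}_{p_1}$ has slope $\geq 0$ on $\Gr_0 = \Omega^{(2\kappa,0;0)}$ (the classical content that $U_{\pp_1}$ is integral on overconvergent sheaves), we obtain the claimed slope bound on $\Gr_n$.

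The main obstacle is establishing the slope-$\geq 0$ bound on $\Gr_0$. Since $p_1 \colon C_{[a,1]} \to \Sh{M}_0(\pp_1)_{[a,1]}$ is finite flat of generic degree $p$ (and generically \'{e}tale on the region $[a,1]$, where $H = H^{\can}_1$ is the canonical subgroup), the naive trace contributes a factor of $p$ on constant sections; the normalization factor $p^{-1}$ in $\star$ is designed to cancel this, giving an operator which is integral on $\Omega^{(2\kappa,0;0)}$. Verifying this at the level of integral interpolation sheaves, together with the fact that $(\varpi_1/x_1)^{(\kappa,0)}$ is a unit (so contributes slope $0$), is the technical core and is the analogue in this setting of the ``optimally integral'' statement of Fakhruddin--Pilloni.
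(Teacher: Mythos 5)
Your proposal follows essentially the same route as the paper: part (1) from the functoriality of the marked-sections/$\Sym^{\kappa}$ construction applied to the filtration-compatible map $\check{\pi}^*$ of Lemma \ref{L109}, and part (2) from the local model of Proposition \ref{P1015} together with the identification of $\check{\pi}^*$ on the graded quotient as multiplication by $p/\underline{\Ha}_1^{p+1}$, so that $\Gr_n$ picks up the $n$-th power of that factor. Your additional discussion of the $n=0$ baseline (integrality of $\star\cdot\mathrm{tr}_{p_1}$ on $\Omega^{(2\kappa,0;0)}$) is exactly the point the paper disposes of via the preceding remark on optimal integrality in the sense of Fakhruddin--Pilloni, so the two arguments coincide in substance.
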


\begin{proof}
    The first point follows from the functorial construction of $\check{\pi}^* \colon p_2^*\Sh{N}^{(2\kappa,0;0)}_{\mathfrak{M}_0(\pp_1)_{[a,1]}} \to p_1^*\Sh{N}^{(2\kappa,0;0)}_{\mathfrak{M}_0(\pp_1)_{[a,1]}}$. The second point follows from the local description of the sheaf as in Proposition \ref{P1015} and Lemma \ref{L109}.
\end{proof}

\begin{prop}
    The $U_{\pp_1}$ operator acts compactly on $R\Gamma(\Sh{X}_0(\pp_1)_{[a,1]}, \Sh{N}^{(2\kappa,0;0)})$.
\end{prop}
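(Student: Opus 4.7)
The plan is to exhibit $U_{\pp_1}$ as a composition in which the key factor is a restriction map between cohomology on two admissible opens standing in a strict-neighborhood relation; compactness will then follow from the general compactness of such restriction maps for small, locally projective Banach sheaves, combined with boundedness of the rest of the construction.

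First, I would use the preceding lemma to produce the strict neighborhood. Writing $a' := 1 - \tfrac{1-a}{p}$, one has $p_2(p_1^{-1}\mathfrak{M}_0(\pp_1)_{[a,1]}) \subset \mathfrak{M}_0(\pp_1)_{[a',1]}$, and $a < a'$ since $a < 1$, so $\Sh{X}_0(\pp_1)_{[a',1]}$ sits inside $\Sh{X}_0(\pp_1)_{[a,1]}$ as a strict subopen in the sense appropriate for rigid cohomology. Consequently the $p_2^\ast$ step in the definition of $U_{\pp_1}$ depends only on the restriction of the input to $\Sh{X}_0(\pp_1)_{[a',1]}$, so the operator factors as
\[
R\Gamma(\Sh{X}_0(\pp_1)_{[a,1]}, \Sh{N}^{(2\kappa,0;0)}) \xrightarrow{\mathrm{res}} R\Gamma(\Sh{X}_0(\pp_1)_{[a',1]}, \Sh{N}^{(2\kappa,0;0)}) \xrightarrow{V} R\Gamma(\Sh{X}_0(\pp_1)_{[a,1]}, \Sh{N}^{(2\kappa,0;0)}),
\]
where $V$ bundles together $p_2^\ast$, $\check{\pi}^\ast$ and the normalised trace along $p_1$, and is continuous.

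Next, I would invoke the general compactness of the restriction map between cohomology complexes on strict neighborhoods for small, locally projective Banach sheaves, as developed in the rigid higher Coleman framework of \cite{boxer2021highercolemantheory} and exploited in the analogous setting of \cite{kazi2024twistedtripleproductpadic}. The hypothesis that $\Sh{N}^{(2\kappa,0;0)}$ is small and locally projective as a Banach sheaf was already recorded in the remark above, so this compactness applies directly to the map $\mathrm{res}$ in the factorisation. Compactness of $U_{\pp_1}$ then follows at once, being the composition of a compact operator with the bounded operator $V$.

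The main obstacle, I expect, is that $\Sh{N}^{(2\kappa,0;0)}$ is a $p$-adically completed colimit of finite-rank filtered pieces rather than a coherent sheaf, so one must verify that the compactness argument survives this limiting process. This is precisely where the filtration and the slope estimate of Corollary \ref{C1024} come into play: on each $\Fil_n\Sh{N}^{(2\kappa,0;0)}$ the compactness is the standard one for locally free sheaves of finite rank on strict neighborhoods, while the slopes of $U_{\pp_1}$ on $\Gr_n$ grow linearly in $n$ (and hence tend to infinity), so the contribution of the higher filtered pieces is controlled and the full Banach-module compactness of $U_{\pp_1}$ on the cohomology of the colimit follows by the usual limiting argument.
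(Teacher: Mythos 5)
Your argument is correct and is essentially the paper's: the explicit strict-neighbourhood factorisation you extract from the degree lemma is exactly the mechanism behind \cite[Lemma 2.5.23]{boxer2021highercolemantheory}, which the paper cites after using Corollary \ref{C1024}(2) to reduce to the overconvergent (finite-rank) part. The only caution is that your middle step, as stated, overreaches --- restriction maps are \emph{not} compact for infinite-rank Banach sheaves in general, so compactness does not ``follow at once'' there --- but your final paragraph supplies precisely the needed repair (reduce first to the $\Fil_n$ pieces using the slope growth on $\Gr_n$, then apply strict-neighbourhood compactness to those finite-rank sheaves), which is the order in which the paper's proof proceeds.
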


\begin{proof}
    Since the sheaf $\Sh{N}^{(2\kappa,0;0)}$ is locally projective (in fact locally orthonormalisable), the second point of Corollary \ref{C1024} reduces us to proving compactness on $R\Gamma(\Sh{X}_0(\pp_1)_{[a,1]}, \Omega^{(2\kappa,0;0)})$. Here the claim follows from \cite[Lemma 2.5.23]{boxer2021highercolemantheory}. We remark that in the statement of the lemma in loc. cit. the sheaf $\Sh{F}$ is defined over the whole space $\Sh{X}$, and the base is a non-archimedean field, but in the proof all that is used is the fact that $\Sh{X}$ is proper, and $\Sh{F}_{|\Sh{U}}$ is the generic fibre of a flat formal Banach sheaf. 
\end{proof}

\begin{defn}
    For $h \in \Q_{>0}$, let $R\Gamma(\Sh{X}_0(\pp_1)_{[a,1]}, \Sh{N}^{(2\kappa+k_1,k_2;w)})^{\leq h}$ be the slope $\leq h$ part of the cohomology for the $U_{\pp_1}$ operator, and let $e^{\leq h}$ be the projector onto this space.
\end{defn}

\begin{cor}
    The natural map \[R\Gamma(\Sh{X}_0(\pp_1)_{[a,1]}, \Fil_n\Sh{N}^{(2\kappa+k_1,k_2;w)})^{\leq h} \to R\Gamma(\Sh{X}_0(\pp_1)_{[a,1]}, \Sh{N}^{(2\kappa+k_1,k_2;w)})^{\leq h}\]
    is an isomorphism for $n \gg 0$. 
\end{cor}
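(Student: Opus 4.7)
The plan is to exploit the filtration on $\Sh{N}^{(2\kappa+k_1,k_2;w)}$ together with the slope bounds on graded pieces from Corollary \ref{C1024}(2). Since the slopes on $\Gr_m$ grow linearly in $m$, once $m$ is large enough the tail of the filtration contributes nothing to the slope $\leq h$ part of the cohomology.

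First I would set $c > 0$ to be a positive lower bound for $v_p\!\bigl(p/\widetilde{\Ha}_1^{-(p+1)}\bigr)$ on $\Sh{X}_0(\pp_1)_{[a,1]}$; such a bound exists because $\Hdg_1$ is bounded on this region and, by our choice of $v$, the quantity in question is a strictly positive rational. By Corollary \ref{C1024}(2), $U_{\pp_1}$ acts on $R\Gamma\bigl(\Sh{X}_0(\pp_1)_{[a,1]}, \Gr_m \Sh{N}^{(2\kappa+k_1,k_2;w)}\bigr)$ with slope $\geq m c$; hence as soon as $N > h/c$ the slope $\leq h$ projector $e^{\leq h}$ kills $R\Gamma(\Gr_m)$ for every $m > N$.

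Next I would feed the $U_{\pp_1}$-equivariant short exact sequences
\[
0 \to \Fil_{m-1}\Sh{N}^{(2\kappa+k_1,k_2;w)} \to \Fil_{m}\Sh{N}^{(2\kappa+k_1,k_2;w)} \to \Gr_m \Sh{N}^{(2\kappa+k_1,k_2;w)} \to 0
\]
(equivariance from Corollary \ref{C1024}(1)) into the long exact cohomology sequence and apply $e^{\leq h}$. Compactness of $U_{\pp_1}$ on each $R\Gamma(\Fil_m)$ follows by induction along these sequences from its compactness on the line bundles $\Gr_m$, which is established exactly as for $\Omega^{(2\kappa,0;0)}$ in the preceding proposition. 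The vanishing above then yields $R\Gamma(\Fil_{m-1})^{\leq h} \xrightarrow{\sim} R\Gamma(\Fil_m)^{\leq h}$ for all $m > N$, i.e.\ the claim for the finite filtration.

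To extend this to the full sheaf, I would introduce the quotient $\Sh{Q}_n := \Sh{N}^{(2\kappa+k_1,k_2;w)}/\Fil_n$, which is the $p$-adic completion of the colimit of the $\Fil_m/\Fil_n$ for $m>n$ and whose successive graded pieces are precisely the $\Gr_m$ with $m > n$. It then remains to show that $U_{\pp_1}$ on $R\Gamma(\Sh{X}_0(\pp_1)_{[a,1]}, \Sh{Q}_n)$ still has slope $\geq (n+1)c$; combined with the short exact sequence $0 \to \Fil_n \to \Sh{N}^{(2\kappa+k_1,k_2;w)} \to \Sh{Q}_n \to 0$, this forces $R\Gamma(\Fil_n)^{\leq h} \xrightarrow{\sim} R\Gamma(\Sh{N}^{(2\kappa+k_1,k_2;w)})^{\leq h}$ as soon as $n > h/c$.

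The hard part is precisely this last step: transferring the slope estimate across the completed colimit. The cleanest route is to compute the Fredholm determinant of $U_{\pp_1}$ on $R\Gamma(\Sh{Q}_n)$ as an infinite product of the Fredholm determinants on the graded pieces $R\Gamma(\Gr_m)$, each of whose zeros lie in slopes $\geq mc$; then every zero of the global Fredholm determinant has slope $\geq (n+1)c$, whence $R\Gamma(\Sh{Q}_n)^{\leq h} = 0$ once $n > h/c$. Justifying the Fredholm-determinant factorisation in this limiting setting is the one technical subtlety to verify carefully, but it is a standard consequence of the compactness of $U_{\pp_1}$ on the orthonormalisable Banach modules involved, together with the $p$-adic continuity of the slope projectors $e^{\leq h}$ coming from Serre's Riesz theory.
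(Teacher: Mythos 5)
Your argument is correct and is essentially the paper's own: the paper proves this corollary by invoking exactly the slope bound on the graded pieces from Corollary \ref{C1024}(2) (the same input as in the proof of compactness), so that for $n \gg 0$ the pieces $\Gr_m$ with $m>n$ carry slope $> h$ and drop out after applying $e^{\leq h}$. Your write-up merely spells out, via the filtration long exact sequences and the Fredholm-determinant factorisation over the completed colimit, the details that the paper leaves implicit.
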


\begin{proof}
    Follows from the same proof as of Corollary \ref{C1024} (2).
\end{proof}

\subsection{The overconvergent projection}
We have a natural inclusion $H^1(\Sh{X}_0(\pp_1)_{[a,1]}, \Omega^{(2\kappa+k_1,k_2;w)}) \to H^1(\Sh{X}_0(\pp_1)_{[a,1]}, \Fil_n\Sh{N}^{(2\kappa+k_1,k_2;w)})$ for any $n \geq 0$. In this section we construct a meromorphic retraction to this map.

In \cite[\S5]{kazi2024twistedtripleproductpadic} following the work of \cite{andreatta2021triple} we showed that the Gauss-Manin connection $\nabla \colon \Sh{H}_{\Sh{A}} \to \Sh{H}_{\Sh{A}} \otimes \Omega_{M}(\log D)$ induces a connection $\nabla \colon \Sh{N}^{(2\kappa+k_1,k_2;w)} \to \Sh{N}^{(2\kappa+k_1,k_2;w)} \hat{\otimes} \Omega_{\Sh{M}_{1-a}}(\log D)$ that is equivariant for the action of $F^{\times, (p)}_{+}$ and therefore descends to a connection on $\Sh{X}_{1-a}$. Here $D$ is the boundary divisor in the toroidal compactification. Using the Kodaira-Spencer isomorphism $\underline{\omega}^{(2,0;0)} \oplus \underline{\omega}^{(0,2;0)} \simeq \Omega_{\Sh{X}_{1-a}}$, we can therefore write $\nabla = \nabla_1 + \nabla_2$ where $\nabla_1 \colon \Sh{N}^{(2\kappa+k_1,k_2;w)} \to \Sh{N}^{(2\kappa+k_1+2,k_2;w)}$ and $\nabla_2 \colon \Sh{N}^{(2\kappa+k_1,k_2;w)} \to \Sh{N}^{(2\kappa+k_1, k_2+2;w)}$ are the two projections. We pullback these maps to $\Sh{X}_0(\pp_1)_{[a,1]}$. We remark that in the notation of loc. cit. Definition 5.7, $\nabla_1$ corresponds to $\nabla_{(2\kappa+k_1,k_2;w)}(\sigma_1)$.

We recall two important properties of $\nabla_1$.
\begin{prop}\label{P1029}
    \begin{enumerate}
        \item $\nabla_1$ satisfies Griffiths' transversality for the filtration on $\Sh{N}^{(2\kappa+k_1,k_2;w)}$, i.e. 
        \[
        \nabla_1\left(\Fil_n\Sh{N}^{(2\kappa+k_1;k_2;w)}\right) \subset \Fil_{n+1}\Sh{N}^{(2\kappa+k_1+2,k_2;w)}.
        \]
        \item The $\Sh{O}_{\Sh{X}_0(\pp_1)_{[a,1]}}$-linear map $\nabla_1 \colon \Gr_n \Sh{N}^{(2\kappa+k_1;k_2;w)} \to \Gr_{n+1}\Sh{N}^{(2\kappa+k_1+2,k_2;w)}$ is an isomorphism onto $(2\kappa + k_1 - n)\Gr_{n+1}\Sh{N}^{(2\kappa+k_1+2,k_2;w)}$, where we abuse notation to view $\kappa$ as an element in $\Sh{O}_{\Sh{W}_r}$ such that $\kappa(t) = \exp(\kappa\log{t})$ for any $t \in 1+p^r\Z_p$. \\
        In particular, using the local coordinates of Proposition \ref{P1015}, and letting $V = \frac{Y}{1+p^{n-v\frac{p^n}{p-1}}Z}$, so that \[\Sh{N}^{(2\kappa+k_1,k_2;w)}(\Sp R) = R\langle V\rangle \cdot (1+p^{n-\frac{p^n}{p-1}}Z)^{(2\kappa+k_1)}\omega_2^{k_2}\] for some $\Sp R \subset \Sh{X}_0(\pp_1)_{[a,1]}$ trivialising $\omega^{\sharp}_{\Sh{A},1}, \Sh{H}^{\sharp}_{\Sh{A},1}$ and $\omega_{\Sh{A},2}$, we have 
        \[
        \nabla_1\left(V^n\cdot (1+p^{n-\frac{p^n}{p-1}Z})^{2\kappa+k_1}\omega_2^{k_2}\right) = s + (2\kappa+k_1 - n)V^{n+1}\cdot (1+p^{n-\frac{p^n}{p-1}Z})^{2\kappa+k_1+2}\omega_2^{k_2}
        \]
        for some $s \in \Fil_n\Sh{N}^{(2\kappa+k_1+2,k_2;w)}$.
    \end{enumerate}
\end{prop}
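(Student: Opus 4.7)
The plan is to prove both parts by a local coordinate calculation, working on a Zariski open $\Sp R \subset \Sh{X}_0(\pp_1)_{[a,1]}$ where $\Hs$ admits a basis $(e_1,e_2)$ compatible with the Hodge filtration, and where $e_1 = 1+p^{n-v\frac{p^n}{p-1}}Z$ is a local lift of the marked section of $\Os$. Setting $V = e_2/e_1$, Proposition \ref{P1015} gives the local trivialisation
\[
\Sh{N}^{(2\kappa+k_1,k_2;w)}(\Sp R) \;=\; R\langle V\rangle \cdot e_1^{2\kappa+k_1}\omega_{\Sh{A},2}^{k_2},
\]
with $\Fil_i$ cut out by polynomials in $V$ of degree $\leq i$. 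The key input is that $\nabla_1$ on $\Sh{N}^{(2\kappa+k_1,k_2;w)}$ is, by construction in \cite[\S5]{kazi2024twistedtripleproductpadic}, built from the classical Gauss--Manin connection on $\Hs$ by extending via Leibniz across the ``$\Sym^\kappa$'' operation of Proposition \ref{P1015}; concretely, on the local generator this amounts to the formal identity $\nabla_1(e_1^{2\kappa+k_1}) = (2\kappa+k_1)\, e_1^{-1}\nabla_1(e_1)\cdot e_1^{2\kappa+k_1}$, which makes sense since $e_1$ is a unit congruent to $1$.

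First I would record the classical Gauss--Manin data in these coordinates. Write
\[
\nabla_1(e_1) = (\alpha e_1 + \beta e_2)\otimes \omega_{\Sh{A},1}^2, \qquad \nabla_1(e_2) = (\gamma e_1 + \delta e_2)\otimes \omega_{\Sh{A},1}^2,
\]
with $\alpha,\beta,\gamma,\delta \in R$. The coefficient $\beta$ is precisely the value of the Kodaira--Spencer map $\omega_{\Sh{A},1} \to \omega_{\Sh{A},1}^\vee \otimes \omega_{\Sh{A},1}^2 \simeq \omega_{\Sh{A},1}$ on the basis vector $e_1$; under the very normalisation of $\underline{\omega}^{(2,0;0)} \oplus \underline{\omega}^{(0,2;0)} \simeq \Omega^1_{\Sh{X}_{1-a}}$ used to split $\nabla = \nabla_1 + \nabla_2$, this $\beta$ is a unit, and after rescaling the basis vector of $\omega_{\Sh{A},1}^2$ one may take $\beta = 1$ locally.

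Next, applying Leibniz together with $\nabla_1(V) = \nabla_1(e_2/e_1) = \big(\gamma + (\delta - \alpha) V - \beta V^2\big)\otimes \omega_{\Sh{A},1}^2$, one computes
\[
\nabla_1\!\left(V^i \cdot e_1^{2\kappa+k_1}\omega_{\Sh{A},2}^{k_2}\right) = \Big[(2\kappa + k_1 - i)\beta V^{i+1} + P_i(V)\Big]\, e_1^{2\kappa+k_1+2}\omega_{\Sh{A},2}^{k_2}\otimes \omega_{\Sh{A},1}^2,
\]
where $P_i(V) \in R\langle V\rangle$ has degree $\leq i$, and the $V^{i+1}$ coefficient is assembled from $(2\kappa+k_1)\beta V \cdot V^i$ contributed by $V^i\,\nabla_1(e_1^{2\kappa+k_1})$ together with $-i\beta V \cdot V^i$ contributed by $iV^{i-1}\nabla_1(V)$, all other contributions (from $\alpha$, $\gamma$, $\delta$, and the cross terms) having $V$-degree at most $i$. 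This already establishes (1), Griffiths transversality. Substituting $\beta = 1$ then gives the leading coefficient $(2\kappa+k_1 - i)$, proving (2).

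The main obstacle is justifying the formal Leibniz identity $\nabla_1(e_1^{2\kappa+k_1}) = (2\kappa+k_1)\,e_1^{-1}\nabla_1(e_1)\cdot e_1^{2\kappa+k_1}$ when the exponent $2\kappa+k_1$ is a universal weight rather than an integer, which is precisely what justifies transporting the integer-case computation to the nearly overconvergent setting. This is exactly the content of the construction of the connection $\nabla$ on $\Sh{N}^{(2\kappa+k_1,k_2;w)}$ in \cite[\S5]{kazi2024twistedtripleproductpadic} (following the vector-bundle-with-marked-section formalism of \cite{andreatta2021triple}), which I would invoke rather than re-derive; once this is in hand, the rest of the proof is a direct manipulation of polynomials in $V$.
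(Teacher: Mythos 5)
Your argument is correct, but note that the paper itself does not reprove anything here: its proof of Proposition \ref{P1029} consists of citing \cite[Theorem 5.5 and Corollary 5.8]{kazi2024twistedtripleproductpadic}, and the local Leibniz/Kodaira--Spencer computation you carry out is essentially the argument behind those cited results (it is the standard Maass--Shimura-type calculation: the leading $V^{n+1}$-coefficient $(2\kappa+k_1)\beta$ from differentiating the formal power $e_1^{2\kappa+k_1}$, minus $n\beta$ from the $-\beta V^2$ term in $\nabla_1(V)$, with the formal Leibniz rule for the non-integral exponent supplied by the marked-sections construction of the connection). So the content is the same; you have simply inlined the proof that the paper outsources. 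Two small points deserve explicit mention in your write-up: (i) you should record why the factor $\omega_{\Sh{A},2}^{k_2}$ (and the determinant twists encoded by $w$) stays within the sheaf and only contributes terms of $V$-degree $\le n$ --- this is because the $\sigma_1$-component of Kodaira--Spencer annihilates $\omega_{\Sh{A},2}$, so $\nabla_1$ preserves the line $\omega_{\Sh{A},2}\subset\Sh{H}_{\Sh{A},2}$; without this remark it is not even clear that $\nabla_1$ lands in $\Sh{N}^{(2\kappa+k_1+2,k_2;w)}$ rather than in something built from $\Sym^{k_2}\Sh{H}_{\Sh{A},2}$; and (ii) the unit $\beta$ should not be disposed of by an ad hoc rescaling: it equals $1$ (in the displayed trivialisations) precisely because $\nabla_1$ is defined as the composite of $\nabla$ with the projection $\Omega_{\Sh{X}}\to\underline{\omega}^{(2,0;0)}$ coming from the Kodaira--Spencer isomorphism, and this normalisation is what makes the exact coefficient $(2\kappa+k_1-n)$ in the statement (as opposed to a unit multiple of it) come out correctly; for the weaker claim that the image of $\Gr_n$ is $(2\kappa+k_1-n)\Gr_{n+1}$ the unit is of course harmless.
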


\begin{proof}
    The first point is proved in \cite[Theorem 5.5]{kazi2024twistedtripleproductpadic}. The second point is Corollary 5.8 of loc. cit.
\end{proof}

We will study the hypercohomology $\mathbf{H}^{\bullet}_{\dR,1}(\Sh{X}_0(\pp_1)_{[a,1]}, \Fil_n\Sh{N}^{(2\kappa+k_1-2,k_2;w)})$ of the complex 
\begin{equation}\label{*}
\Fil_n\Sh{N}^{(2\kappa+k_1-2,k_2;2)} \xrightarrow{\nabla_1} \Fil_{n+1}\Sh{N}^{(2\kappa+k_1,k_2;w)}. \tag{$\ast$}
\end{equation}

\begin{lemma}\label{L1030}
    Let $\lambda = \prod_{i=0}^{n}(2\kappa+k_1-2-i)$. Then for any admissible open affinoid $\Sh{U} \subset \Sh{X}_0(\pp_1)_{[a,1]}\otimes_{\Sh{O}_{\Sh{W}_r}} \Sh{O}_{\Sh{W}_r}[\lambda^{-1}]$, Griffiths' transversality induces an isomorphism of $\Sh{O}_{\Sh{W}_r}[\lambda^{-1}]$-modules
    \[
    \psi^{-1}_{\Sh{U}} \colon \Omega^{(2\kappa+k_1,k_2;w)}(\Sh{U}) \xrightarrow{\sim} \frac{\Fil_{n+1}\Sh{N}^{(2\kappa+k_1,k_2;w)}(\Sh{U})}{\nabla_1(\Fil_n\Sh{N}^{(2\kappa+k_1-2,k_2;w)}(\Sh{U}))}.
    \]
\end{lemma}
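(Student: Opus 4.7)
The plan is to exploit Griffiths transversality together with the explicit computation of the induced map on graded pieces (Proposition \ref{P1029}) to show that, after inverting $\lambda$, the complex $(\ast)$ has zero cokernel in all filtration degrees $\ge 1$, leaving only $\Gr_0 = \Omega^{(2\kappa+k_1,k_2;w)}$.

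More precisely, I would first observe that by Proposition \ref{P1029}(1), $\nabla_1$ sends $\Fil_i\Sh{N}^{(2\kappa+k_1-2,k_2;w)}$ into $\Fil_{i+1}\Sh{N}^{(2\kappa+k_1,k_2;w)}$, so the complex $(\ast)$ inherits a filtration. The cokernel $\mathcal{C} := \Fil_{n+1}\Sh{N}^{(2\kappa+k_1,k_2;w)}/\nabla_1(\Fil_n\Sh{N}^{(2\kappa+k_1-2,k_2;w)})$ therefore acquires an induced filtration with $\Gr_0\mathcal{C} = \Gr_0\Sh{N}^{(2\kappa+k_1,k_2;w)} = \Omega^{(2\kappa+k_1,k_2;w)}$ (since nothing from $\nabla_1$ lands in $\Fil_0$), while for $1\le j\le n+1$,
\[
\Gr_j\mathcal{C} = \operatorname{coker}\Bigl(\Gr_{j-1}\Sh{N}^{(2\kappa+k_1-2,k_2;w)} \xrightarrow{\nabla_1} \Gr_j\Sh{N}^{(2\kappa+k_1,k_2;w)}\Bigr).
\]
By Proposition \ref{P1029}(2), this induced map is multiplication by $(2\kappa + k_1 - 2 - (j-1))$, a factor of $\lambda$. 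After base change to $\Sh{O}_{\Sh{W}_r}[\lambda^{-1}]$, each such map becomes an isomorphism, so $\Gr_j\mathcal{C} = 0$ for $1\le j\le n+1$; hence the natural inclusion $\Omega^{(2\kappa+k_1,k_2;w)} \hookrightarrow \mathcal{C}$ is an isomorphism, which is exactly $\psi_{\Sh{U}}^{-1}$.

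To make this graded argument rigorous in the form of a bijection of sections over $\Sh{U}$, I would use the explicit local model from Proposition \ref{P1015}: trivialising $\omega^{\sharp}_{\Sh{A},1}$, $\Sh{H}^{\sharp}_{\Sh{A},1}$ and $\omega_{\Sh{A},2}$ over $\Sh{U}$, every element of $\Fil_{n+1}\Sh{N}^{(2\kappa+k_1,k_2;w)}(\Sh{U})$ has the form $\sum_{i=0}^{n+1} c_i V^i \cdot (1+p^{\ast}Z)^{2\kappa+k_1}\omega_2^{k_2}$. For surjectivity of $\psi_{\Sh{U}}^{-1}$, one inductively subtracts $\nabla_1$ applied to $\tfrac{c_{i+1}}{2\kappa+k_1-2-i} V^i \cdot (1+p^{\ast}Z)^{2\kappa+k_1-2}\omega_2^{k_2}$ to kill the top $V$-power, using the formula in Proposition \ref{P1029}(2); each division is legal since $(2\kappa+k_1-2-i)$ divides $\lambda$. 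For injectivity, suppose $g = \sum_{i=0}^n a_i V^i \cdot (1+p^{\ast}Z)^{2\kappa+k_1-2}\omega_2^{k_2} \in \Fil_n$ with $\nabla_1(g) \in \Fil_0$; the $V^{n+1}$-coefficient of $\nabla_1(g)$ is $(2\kappa+k_1-2-n)a_n$ (no lower filtration term can contribute to degree $n+1$), so $a_n = 0$ after inverting $\lambda$, and a descending induction gives $a_i = 0$ for all $i$, hence $g = 0$.

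The main subtlety, rather than difficulty, is checking that the local statement glues to a well-defined isomorphism of sheaves on $\Sh{U}$: the filtration on $\mathcal{C}$ and the embedding $\Omega^{(2\kappa+k_1,k_2;w)} = \Fil_0/\nabla_1(\Fil_{-1}) = \Fil_0 \hookrightarrow \mathcal{C}$ are intrinsic, so the vanishing of the graded pieces after inverting $\lambda$ is a coordinate-free statement, and the local trivialisations are used only to verify that claim on stalks. A secondary point worth noting is that $\nabla_1$ is not $\Sh{O}$-linear -- only its associated graded is -- so the surjectivity induction must be carried out filtration-degree by filtration-degree, as above, rather than by a single linear-algebra step; Griffiths transversality is what makes this bookkeeping consistent.
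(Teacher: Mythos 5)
Your proof is correct and follows essentially the same route as the paper: the paper's proof is simply a one-line appeal to Proposition \ref{P1029}(2), and your argument is the expanded version of exactly that reduction — passing to the graded pieces of the filtration, where $\nabla_1$ becomes multiplication by the factors $(2\kappa+k_1-2-i)$ of $\lambda$, hence an isomorphism after inverting $\lambda$, with the local model of Proposition \ref{P1015} used to verify injectivity and surjectivity degree by degree.
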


\begin{proof}
    This follows from Proposition \ref{P1029} (2).
\end{proof}

Equip the complex (\ref{*}) with the Hodge filtration, i.e. \[F^0(*) = (*),  \qquad F^1({*}) = 0 \to \Fil_{n+1}\Sh{N}^{(2\kappa+k_1,k_2;w)}.\]
Then the Hodge to de Rham spectral sequence induces an exact sequence
\begin{equation}\label{E03}
0 \to \frac{H^1(\Sh{X}_0(\pp_1)_{[a,1]}, \Fil_{n+1}\Sh{N}^{(2\kappa+k_1,k_2;w)})}{\nabla_1\left(H^1(\Sh{X}_0(\pp_1)_{[a,1]}, \Fil_{n}\Sh{N}^{(2\kappa+k_1-2,k_2;w)})\right)} \to \mathbf{H}^{2}_{\dR,1}(\Sh{X}_0(\pp_1)_{[a,1]}, \Fil_n\Sh{N}^{(2\kappa+k_1-2,k_2;w)}).
\end{equation}
We make this more explicit using \v{C}ech resolutions for the sheaves in (\ref{*}). Fix an admissible covering $\Sh{U} = \{\Sh{U}_i\}_{1\leq i\leq n}$ of $\Sh{X}_0(\pp_1)_{[a,1]}$ by admissible open affinoids. We get a double complex as follows.

\[\begin{tikzcd}
	0 & {\Fil_n\Sh{N}^{(2\kappa+k_1-2,k_2;w)}} & {\Fil_{n+1}\Sh{N}^{(2\kappa+k_1,k_2;w)}} & 0 \\
	0 & {\prod_i\Fil_n\Sh{N}^{(2\kappa+k_1-2,k_2;w)}_{|\Sh{U}_i}} & {\prod_i\Fil_{n+1}\Sh{N}^{(2\kappa+k_1,k_2;w)}_{|\Sh{U}_i}} & 0 \\
	0 & {\prod_{i<j}\Fil_n\Sh{N}^{(2\kappa+k_1-2,k_2;w)}_{|\Sh{U}_{ij}}} & {\prod_{i<j}\Fil_{n+1}\Sh{N}^{(2\kappa+k_1,k_2;w)}_{|\Sh{U}_{ij}}} & 0 \\
	0 & {\prod_{i<j<k}\Fil_n\Sh{N}^{(2\kappa+k_1-2,k_2;w)}_{|\Sh{U}_{ijk}}} & {\prod_{i<j<k}\Fil_{n+1}\Sh{N}^{(2\kappa+k_1,k_2;w)}_{|\Sh{U}_{ijk}}} & 0 \\
	 & \vdots & \vdots & 
	\arrow[from=1-1, to=1-2]
	\arrow["{\nabla_1}", from=1-2, to=1-3]
	\arrow["\partial", from=1-2, to=2-2]
	\arrow[from=1-3, to=1-4]
	\arrow["\partial", from=1-3, to=2-3]
	\arrow[from=2-1, to=2-2]
	\arrow["{\nabla_1}", from=2-2, to=2-3]
	\arrow["\partial", from=2-2, to=3-2]
	\arrow[from=2-3, to=2-4]
	\arrow["\partial", from=2-3, to=3-3]
	\arrow[from=3-1, to=3-2]
	\arrow["{\nabla_1}", from=3-2, to=3-3]
	\arrow["\partial", from=3-2, to=4-2]
	\arrow[from=3-3, to=3-4]
	\arrow["\partial", from=3-3, to=4-3]
	\arrow[from=4-1, to=4-2]
	\arrow["{\nabla_1}", from=4-2, to=4-3]
	\arrow[from=4-2, to=5-2]
	\arrow[from=4-3, to=4-4]
	\arrow[from=4-3, to=5-3]
\end{tikzcd}\]

The total complex of this double complex computes $\mathbf{H}^{\bullet}_{\dR,1}(\Sh{X}_0(\pp_1)_{[a,1]}, \Fil_n\Sh{N}^{(2\kappa+k_1-2,k_2;w)})$. In particular, we have a complex as follows.

\begin{align*}
    \cdots &\to {\prod_{i<j}\Fil_n\Sh{N}^{(2\kappa+k_1-2,k_2;w)}{(\Sh{U}_{ij}})} \oplus {\prod_i\Fil_{n+1}\Sh{N}^{(2\kappa+k_1,k_2;w)}{(\Sh{U}_i})} \\
    &\to {\prod_{i<j<k}\Fil_n\Sh{N}^{(2\kappa+k_1-2,k_2;w)}{(\Sh{U}_{ijk}})} \oplus {\prod_{i<j}\Fil_{n+1}\Sh{N}^{(2\kappa+k_1,k_2;w)}{(\Sh{U}_{ij}})} \\
    &\to {\prod_{i<j<k<\ell}\Fil_n\Sh{N}^{(2\kappa+k_1-2,k_2;w)}{(\Sh{U}_{ijk\ell}})} \oplus {\prod_{i<j<k}\Fil_{n+1}\Sh{N}^{(2\kappa+k_1,k_2;w)}{(\Sh{U}_{ijk}})} \to \cdots
\end{align*}
The arrows send $(a,b) \mapsto \left(\partial(a), \nabla_1(a) - \partial(b)\right)$ where $\partial$ is the usual differential of the \v{C}ech complex. Since this complex is the mapping cone of $\nabla_1 \colon \mathscr{C}^{\bullet}(\Sh{U},\Fil_n\Sh{N}^{(2\kappa+k_1-2,k_2;w)}) \to \mathscr{C}^{\bullet}(\Sh{U},\Fil_{n+1}\Sh{N}^{(2\kappa+k_1,k_2;w)})$ the exact sequence (\ref{E03}) is clear.

\begin{prop}
    We have an isomorphism of $\Sh{O}_{\Sh{W}_r}[\lambda^{-1}]$-modules 
    \[
    H^1(\Sh{X}_0(\pp_1)_{[a,1]}, \Omega^{(2\kappa+k_1,k_2;w)}) \xrightarrow{\sim} \frac{H^1(\Sh{X}_0(\pp_1)_{[a,1]}, \Fil_{n+1}\Sh{N}^{(2\kappa+k_1,k_2;w)})}{\nabla_1\left(H^1(\Sh{X}_0(\pp_1)_{[a,1]}, \Fil_{n}\Sh{N}^{(2\kappa+k_1-2,k_2;w)})\right)}.
    \]
\end{prop}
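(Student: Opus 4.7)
The plan is to use the short exact sequence of sheaves underlying the complex $(\ast)$, together with a sheaf-level splitting coming directly from Lemma \ref{L1030}, to force the relevant connecting map in the long exact sequence to vanish.

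First I would promote Lemma \ref{L1030} to a short exact sequence of sheaves on $\Sh{X}_0(\pp_1)_{[a,1]}$, valid after inverting $\lambda$:
\[
0 \to \Fil_n\Sh{N}^{(2\kappa+k_1-2,k_2;w)} \xrightarrow{\nabla_1} \Fil_{n+1}\Sh{N}^{(2\kappa+k_1,k_2;w)} \xrightarrow{\bar{p}} \Omega^{(2\kappa+k_1,k_2;w)} \to 0.
\]
Injectivity of $\nabla_1$ and the identification of its cokernel with $\Omega$ both come from Proposition \ref{P1029}(2): on the graded piece $\Gr_i$ for $0 \le i \le n$, $\nabla_1$ acts as multiplication by $(2\kappa+k_1-2-i)$, a unit in $\Sh{O}_{\Sh{W}_r}[\lambda^{-1}]$, so $\Gr_0 = \Omega$ is the only surviving graded piece of the cokernel. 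The key additional input is that the inclusion $\iota \colon \Omega = \Fil_0 \hookrightarrow \Fil_{n+1}$ is a sheaf-level section of $\bar{p}$: the isomorphism $\psi^{-1}_{\Sh{U}}$ of Lemma \ref{L1030} is precisely the composition $\Omega \hookrightarrow \Fil_{n+1} \twoheadrightarrow \Fil_{n+1}/\nabla_1\Fil_n$, so under the identification of the cokernel with $\Omega$ one has $\bar{p} \circ \iota = \mathrm{id}_\Omega$.

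The main step is to take the long exact sequence in cohomology,
\[
\cdots \to H^1(\Fil_n) \xrightarrow{\nabla_1} H^1(\Fil_{n+1}) \xrightarrow{\bar{p}_*} H^1(\Omega) \xrightarrow{\delta} H^2(\Fil_n) \to \cdots,
\]
and observe that $\bar{p} \circ \iota = \mathrm{id}$ yields $\bar{p}_* \circ \iota_* = \mathrm{id}_{H^1(\Omega)}$. So $\bar{p}_*$ is (split) surjective, the connecting morphism $\delta$ vanishes, and $\bar{p}_*$ factors through an isomorphism $H^1(\Fil_{n+1})/\nabla_1 H^1(\Fil_n) \xrightarrow{\sim} H^1(\Omega)$. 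Its inverse is the map induced by $\iota_*$, which is the isomorphism stated in the proposition. Equivalently, this shows that the term $E_2^{0,2} = \ker(\nabla_1 \text{ on } H^2(\Fil_n))$ in the Hodge-to-de-Rham spectral sequence for $(\ast)$ vanishes after inverting $\lambda$, so the injection in \eqref{E03} is itself an isomorphism.

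The only genuinely non-formal point is the verification that $\psi^{-1}_{\Sh{U}}$ of Lemma \ref{L1030} is really induced by the inclusion $\Omega \hookrightarrow \Fil_{n+1}$, so that $\iota$ furnishes a sheaf-level section of $\bar{p}$; this is immediate from the local description in Proposition \ref{P1029}(2) but is the point on which the whole argument hinges.
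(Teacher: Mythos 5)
Your proof is correct, and it takes a more structural route than the paper. The paper proves the statement by hand at the level of \v{C}ech cocycles for the fixed cover: injectivity via the observation $\operatorname{im}(\nabla_1)\cap\Fil_0=0$ from the local coordinates of Proposition \ref{P1029}(2), and surjectivity by explicitly constructing the inverse map $y_{ij}\mapsto\psi_{\Sh{U}_{ij}}(y_{ij})$ using Lemma \ref{L1030}. You instead package Lemma \ref{L1030} into a short exact sequence $0\to\Fil_n\Sh{N}^{(2\kappa+k_1-2,k_2;w)}\xrightarrow{\nabla_1}\Fil_{n+1}\Sh{N}^{(2\kappa+k_1,k_2;w)}\to\Omega^{(2\kappa+k_1,k_2;w)}\to 0$ (after inverting $\lambda$), split by the inclusion $\Fil_0\hookrightarrow\Fil_{n+1}$, and read off the statement from the long exact sequence; the splitting identity $\bar p\circ\iota=\mathrm{id}$ is indeed essentially tautological, since the identification of the cokernel with $\Omega$ in Lemma \ref{L1030} is by definition induced by that inclusion. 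Both arguments consume exactly the same input (Proposition \ref{P1029}(2), which makes the off-diagonal entries of $\nabla_1$ units after inverting $\lambda$), so neither is deeper; what your version buys is a cleaner formal statement, an explicit splitting, and as a by-product the vanishing of the other graded piece $E_2^{0,2}=\ker\bigl(\nabla_1\colon H^2(\Fil_n)\to H^2(\Fil_{n+1})\bigr)$ of $\mathbf{H}^2_{\dR,1}$ after inverting $\lambda$, which the paper's argument does not record. Two small points to make explicit if you write this up: since $\nabla_1$ is a differential operator, your sequence is one of sheaves of $\Sh{O}_{\Sh{W}_r}[\lambda^{-1}]$-modules only (not of $\Sh{O}_{\Sh{X}}$-modules), which is all the long exact sequence needs and matches the module structure asserted in the proposition; and to get the long exact sequence in the same \v{C}ech framework as the paper you should note that exactness of the sequence on sections over each (affinoid) intersection of the cover is precisely the content of Lemma \ref{L1030}, so the \v{C}ech complexes themselves form a short exact sequence of complexes and the snake lemma applies, with exactness at $H^1(\Fil_{n+1})$ recovering the paper's exact sequence \eqref{E03}.
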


\begin{proof}
    We first prove injectivity. We will write $Z^1(\Omega^{(2\kappa+k_1,k_2;w)})$ for $Z^1\left(\mathscr{C}^{\bullet}(\Sh{U},\Omega^{(2\kappa+k_1,k_2;w)})\right)$ and similarly for the coboundaries and other sheaves. We need to show that 
    \begin{align*}
    Z^1\left(\Fil_{n+1}\Omega^{(2\kappa+k_1,k_2;w)}\right) &\cap \left(B^1\left(\Fil_{n+1}\Sh{N}^{(2\kappa+k_1,k_2;w)}\right) + \nabla_1\left(Z^1(\Fil_n\Sh{N}^{(2\kappa+k_1-2,k_2;w)})\right)\right) \\
    &= B^1\left(\left(\Fil_{n+1}\Omega^{(2\kappa+k_1,k_2;w)}\right)\right).
    \end{align*}
    So let $(x_{ij}) \in \prod_{i<j}\Omega^{(2\kappa+k_1,k_2;w)}(\Sh{U}_{ij})$ be a 1-cocycle, such that $x_{ij} = (y_i-y_j) + \nabla_1(z_{ij})$ with $(y_i) \in \prod_i \Fil_{n+1}\Sh{N}^{(2\kappa+k_1,k_2;w)}(\Sh{U}_i)$ and $(z_{ij}) \in Z^1(\Fil_n\Sh{N}^{(2\kappa+k_1-2,k_2;w)})$. By Lemma \ref{L1030}, for each $i$ we can write 
    \[
    y_i = \psi_{\Sh{U}_i}(y_i) + \nabla_1(a_i)
    \]
    for some $a_i \in \Fil_n\Sh{N}^{(2\kappa+k_1-2,k_2;w)}(\Sh{U}_i)$. Therefore, \[x_{ij} - \left(\psi_{\Sh{U}_i}(y_i)_{|\Sh{U}_{ij}} - \psi_{\Sh{U}_j}(y_j)_{|\Sh{U}_{ij}}\right) = \nabla_1(a_i)_{|\Sh{U}_{ij}} - \nabla_1(a_j)_{|\Sh{U}_{ij}} + \nabla_1(z_{ij}).\]
    But by Griffiths' transversality, and more precisely by the description of $\nabla_1$ in local coordinates as in Proposition \ref{P1029}, $\mathrm{im}(\nabla_1) \cap \Fil_0 = 0$. Therefore, both sides of the above equation are 0, and $(x_{ij})$ is a 1-coboundary. 

    Next we prove surjectivity by constructing the inverse map. Let now $(y_{ij}) \in \prod_{i<j} \Fil_{n+1}\Sh{N}^{(2\kappa+k_1,k_2;w)}(\Sh{U}_{ij})$ be a 1-cocycle. We will construct an element in $H^1(\Sh{X}_0(\pp_1)_{[a,1]}, \Omega^{(2\kappa+k_1,k_2;w)})$ corresponding to $(y_{ij})$. By Lemma \ref{L1030}, write $y_{ij} = \psi_{\Sh{U}_{ij}}(y_{ij}) + \nabla_1(z_{ij})$ for some $z_{ij} \in \Fil_n\Sh{N}^{(2\kappa+k_1-2,k_2;w)}(\Sh{U}_{ij})$. By the canonical construction of the maps $\psi_{\Sh{U}}$ we have $\partial(\psi_{\Sh{U}_{ij}}(y_{ij})) = (\psi_{\Sh{U}_{ijk}}(\partial(y_{ij}))) = 0$. Therefore $(\psi_{\Sh{U}_{ij}}(y_{ij}))$ defines an 1-cocycle with coefficients $\Omega^{(2\kappa+k_1,k_2;w)}$. It is then an easy check to show that this defines a well-defined map 
    \[
    \frac{H^1(\Sh{X}_0(\pp_1)_{[a,1]}, \Fil_{n+1}\Sh{N}^{(2\kappa+k_1,k_2;w)})}{\nabla_1\left(H^1(\Sh{X}_0(\pp_1)_{[a,1]}, \Fil_{n}\Sh{N}^{(2\kappa+k_1-2,k_2;w)})\right)} \to H^1(\Sh{X}_0(\pp_1)_{[a,1]}, \Omega^{(2\kappa+k_1,k_2;w)})
    \]
    which is clearly the inverse of the natural map in the statement of the Proposition.
\end{proof}

\begin{defn}
    Define the overconvergent projection to be the map
    \begin{align*}
    e^{\dagger}\colon H^1(\Sh{X}_0(\pp_1)_{[a,1]}, &\Fil_{n+1}\Sh{N}^{(2\kappa+k_1,k_2;w)}) \\
    &\to \frac{H^1(\Sh{X}_0(\pp_1)_{[a,1]}, \Fil_{n+1}\Sh{N}^{(2\kappa+k_1,k_2;w)})}{\nabla_1\left(H^1(\Sh{X}_0(\pp_1)_{[a,1]}, \Fil_{n}\Sh{N}^{(2\kappa+k_1-2,k_2;w)})\right)} \xrightarrow{\sim} H^1(\Sh{X}_0(\pp_1)_{[a,1]}, \Omega^{(2\kappa+k_1,k_2;w)}).
    \end{align*}
\end{defn}

\subsection{Projection to eigenspaces}
The cohomology complex $R\Gamma(\Sh{X}_0(\pp_1)_{[a,1]}, \Omega^{(2\kappa+k_1,k_2;w)})^{\leq h}$ is also equipped with Hecke operators prime to $\pp_1$ and the level $K^{(p)}$. Let $\mathbb{T}$ denote the abstract algebra over $K = R[1/p]$ generated by formal symbols corresponding to these operators and the $U_{\pp_1}$ operator. Then $\mathbb{T}$ acts on $R\Gamma(\Sh{X}_0(\pp_1)_{[a,1]}, \Omega^{(2\kappa+k_1,k_2;w)})^{\leq h}$.

\begin{prop}
    The derived cohomology $R\Gamma(\Sh{X}_0(\pp_1)_{[a,1]}, \Omega^{(2\kappa+k_1,2-k_2;w)})^{\leq h}$ can be represented by a perfect complex $M^{\bullet}(2\kappa+k_1, 2-k_2;w)$ of $\Sh{O}_{\Sh{W}_r}$-modules.
\end{prop}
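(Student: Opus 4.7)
The plan is to represent $R\Gamma(\Sh{X}_0(\pp_1)_{[a,1]}, \Omega^{(2\kappa+k_1, 2-k_2;w)})$ by an explicit bounded complex of orthonormalisable Banach $\Sh{O}_{\Sh{W}_r}$-modules on which $U_{\pp_1}$ acts compactly in each degree, and then to apply the Coleman--Buzzard slope decomposition formalism for complexes, as used in higher Coleman theory.

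First, I would choose a finite admissible affinoid cover $\Sh{U} = \{\Sh{U}_i\}$ of $\Sh{X}_0(\pp_1)_{[a,1]}$ trivialising $\omega^{\sharp}_{\Sh{A},1}$; this is possible because $\Sh{X}_0(\pp_1)_{[a,1]}$ is quasi-compact. The associated Čech complex $\mathscr{C}^{\bullet}(\Sh{U}, \Omega^{(2\kappa+k_1, 2-k_2;w)})$ computes $R\Gamma$, and by the local model in Proposition \ref{P1015} each term is an ON-able (potentially orthonormalisable) Banach $\Sh{O}_{\Sh{W}_r}$-module.

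Next, I would promote $U_{\pp_1}$ to an endomorphism of the full Čech complex that is compact in each degree, not merely on cohomology. The Hecke correspondence sends $\Sh{X}_0(\pp_1)_{[a,1]}$ strictly into $\Sh{X}_0(\pp_1)_{[1-(1-a)/p, 1]}$ (by the lemma preceding the definition of $U_{\pp_1}$), so after refining $\Sh{U}$ to a cover compatible with both projections of the correspondence, the composite $\operatorname{tr}_{p_1} \circ (\star) \circ \check{\pi}^* \circ p_2^*$ acts on $\mathscr{C}^{\bullet}$ and factors through restriction from $\Sh{U}$ to a cover of a relatively compact strict subdomain. Such a restriction between ON-able affinoid Banach modules is completely continuous, so $U_{\pp_1}$ is degreewise compact on $\mathscr{C}^{\bullet}$.

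Finally, I would apply the slope decomposition theory for compact operators on complexes of ON-able Banach modules over an affinoid base (compare \cite[\S2.4--2.5]{boxer2021highercolemantheory}, or Urban's eigenvariety machine). For such a complex with a degreewise compact operator commuting with the differentials, one obtains a canonical direct summand decomposition into slope $\leq h$ and slope $> h$ parts; the slope $\leq h$ part is termwise a finitely generated projective $\Sh{O}_{\Sh{W}_r}$-module, hence defines a perfect complex $M^{\bullet}(2\kappa+k_1, 2-k_2;w)$ quasi-isomorphic to $R\Gamma(\Sh{X}_0(\pp_1)_{[a,1]}, \Omega^{(2\kappa+k_1,2-k_2;w)})^{\leq h}$. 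The main obstacle will be the second step: arranging the cover $\Sh{U}$ so that $U_{\pp_1}$ lifts to an honest endomorphism of the Čech complex with the required compactness on each term, which requires a careful compatibility between $\Sh{U}$ and both projections of the Hecke correspondence — standard but delicate in the non-smooth Iwahori setting at $\pp_1$.
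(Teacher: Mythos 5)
The paper's own proof of this proposition is a one-line citation of \cite[Theorem 6.4.3]{boxer2021highercolemantheory}, and what you have written is essentially a sketch of the argument behind that cited theorem, so in substance the two routes coincide: you are re-proving the black box rather than invoking it. The point to be careful about is exactly the one you flag at the end, and it is where your sketch is thinner than the machinery it needs. One cannot in general arrange, merely by refining the cover, that $U_{\pp_1}$ becomes an honest chain-level endomorphism of a single \v{C}ech complex: the correspondence moves $\Sh{X}_0(\pp_1)_{[a,1]}$ into the strictly smaller region $\Sh{X}_0(\pp_1)_{[1-\frac{1-a}{p},1]}$, so the natural chain-level map goes from the \v{C}ech complex of a cover of the smaller region to that of another cover, and comparing the two uses refinement maps that are only canonical up to homotopy. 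The formalism of \cite{boxer2021highercolemantheory} (complexes of Banach modules, compact morphisms, and the spectral theory of \S 2.4--2.5 combined with \S 6.4) exists precisely to handle this: the operator is a morphism in the homotopy/derived category which factors through restriction to a relatively compact subdomain, hence is compact in their sense, and the slope-$\leq h$ part is then represented by a perfect complex up to quasi-isomorphism rather than being a literal degreewise direct summand of your chosen \v{C}ech complex. Relatedly, the existence of a slope-$\leq h$ decomposition over the affinoid base is not automatic for every $h$: it requires a suitable factorisation of the Fredholm determinant, which in general is available only after possibly shrinking $\Sh{W}_r$ or choosing $h$ adapted to the spectral variety. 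None of this invalidates your outline --- it is the standard proof --- but as written your second step is an assertion rather than an argument, and the cleanest repair is exactly the citation the paper makes.
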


\begin{proof}
    This follows from \cite[Theorem 6.4.3]{boxer2021highercolemantheory}.
\end{proof}

\begin{cor}
    For any integer weight specialisation $\kappa \mapsto a$ we have 
    \[
    M^{\bullet}(2\kappa + k_1, 2-k_2;w) \otimes_{\Sh{O}(\Sh{W}_r)}^{\kappa = a} K \xrightarrow{\sim} M^{\bullet}(2a+k_1,2-k_2;w) = R\Gamma(\Sh{X}_0(\pp_1)_{[a,1]}, \underline{\omega}^{(2a+k_1,2-k_2;w)})^{\leq h}.
    \]
    Moreover, for $2a+k_1 -1 > h$, we have a quasi-isomorphism
    \[
    R\Gamma(\Sh{X}_0(\pp_1), \underline{\omega}^{(2a+k_1,2-k_2;w)})^{\leq h} \simeq R\Gamma(\Sh{X}_0(\pp_1)_{[a,1]}, \underline{\omega}^{(2a+k_1,2-k_2;w)})^{\leq h}.
    \]
\end{cor}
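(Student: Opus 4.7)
The plan is to handle the two assertions independently: the first is essentially formal, while the second is an instance of higher Coleman classicity.

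For the specialisation isomorphism, I note that $M^{\bullet}(2\kappa+k_1, 2-k_2; w)$ is a perfect complex of $\Sh{O}(\Sh{W}_r)$-modules, so derived base change along the surjection $\Sh{O}(\Sh{W}_r) \twoheadrightarrow K$, $\kappa \mapsto a$, coincides with the underived tensor product. The task reduces to verifying two compatibilities. First, that the interpolation sheaf $\Omega^{(2\kappa+k_1, 2-k_2; w)}$ specialises at $\kappa = a$ to the classical sheaf $\underline{\omega}^{(2a+k_1, 2-k_2; w)}$: this is immediate from the local description in Proposition \ref{P1015}, since at $\kappa = a$ the $\kappa$-eigenspace for the action of $1+p^{n-vp^n/(p-1)}\Ga$ collapses to the $(2a)$-th power of the marked section of $\omega^{\sharp}_{\Sh{A},1}$, and $\omega^{\sharp}_{\Sh{A},1} = \omega_{\Sh{A},1}$ on the generic fibre (twisting by the fixed classical factor $\omega_{\Sh{A},1}^{k_1} \otimes \omega_{\Sh{A},2}^{2-k_2}$ with the appropriate determinant twist is $\kappa$-independent). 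Second, that the $U_{\pp_1}$-operator and its slope decomposition commute with specialisation: this is built into the construction, since the correspondence and the normalisation factor $(\varpi_1/x_1)^{(\kappa,0)}p^{-1}$ interpolate the optimally-integral classical $U_{\pp_1}$ (as noted in the remark after its definition), and the slope-$\leq h$ projector on a perfect complex is determined by the characteristic series of $U_{\pp_1}$, which commutes with flat base change.

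For the classicity statement, the approach is to invoke the higher Coleman classicity theorem in the Hilbert modular surface setting, following the template of \cite{boxer2021highercolemantheory} and \cite[\S6]{BoxerPilloni_ModularCurve}. I would consider the restriction map on cohomology from $\Sh{X}_0(\pp_1)$ to $\Sh{X}_0(\pp_1)_{[a,1]}$ and fit it into a distinguished triangle whose third term is controlled by the cohomology on a strict neighbourhood of the complementary locus $\Sh{X}_0(\pp_1)_{[0, a[}$, where the universal subgroup $H$ is disjoint from the canonical subgroup. The crucial input is a slope bound on this complementary region: the analogue of Lemma \ref{L109} shows that on this locus the pullback $\check{\pi}^*$ along the dual isogeny acts on $\omega_{\Sh{A},1}$ by a factor of valuation controlled by $\Hdg_1$, so raising to the $(2a+k_1)$-th power and including the $p^{-1}$ from the normalisation, the operator $U_{\pp_1}$ acts on $\underline{\omega}^{(2a+k_1, 2-k_2; w)}$ with slope at least $2a+k_1 - 1$ on the non-canonical locus. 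Under the hypothesis $2a+k_1 - 1 > h$, the slope-$\leq h$ part of the contribution from the complementary region vanishes, and the restriction map becomes a quasi-isomorphism.

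The main technical obstacle is this sharp slope estimate on the non-canonical locus. While classical in spirit and well-understood for modular curves, its execution here requires carefully tracking the $\pp_1$-contribution in the Hilbert modular setting and leveraging the optimal integrality of the classical $U_{\pp_1}$-operator to extract the sharp bound $2a+k_1-1$ rather than a cruder version; the remainder of the argument, including the compatibility of the slope projector with the distinguished triangle and the formal specialisation claim, is routine given the framework already established in the paper.
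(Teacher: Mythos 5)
Your proposal follows essentially the same route as the paper: the first claim is dispatched by perfectness of the complex (plus the built-in compatibility of the sheaves and $U_{\pp_1}$ with specialisation), and the second by the distinguished triangle relating $R\Gamma(\Sh{X}_0(\pp_1),-)$ and $R\Gamma(\Sh{X}_0(\pp_1)_{[a,1]},-)$ through the cohomology supported on $\Sh{X}_0(\pp_1)_{[0,a[}$, where $U_{\pp_1}$ has slope $\geq 2a+k_1-1$, so $e^{\leq h}$ kills that term. The only difference is that the paper does not re-derive the slope estimate but cites the modular-curve argument of Boxer--Pilloni (Lemma 5.11), whereas you sketch the isogeny-valuation heuristic behind it; this is consistent with the paper's proof.
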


\begin{proof}
    The first claim is clear since $M^{\bullet}(2\kappa+k_1,2-k_2;w)$ is a perfect complex. Let us briefly explain the second claim which is a classicality result. One can consider the compactly supported cohomology $R\Gamma_{\Sh{X}_0(\pp_1)_{[0,a[}}(\Sh{X}_0(\pp_1), \underline{\omega}^{(2a+k_1,2-k_2;w)})$ which sits in the distinguished triangle
    \[
    R\Gamma_{\Sh{X}_0(\pp_1)_{[0,a[}}(\Sh{X}_0(\pp_1), \underline{\omega}^{(2a+k_1,2-k_2;w)}) \to R\Gamma(\Sh{X}_0(\pp_1), \underline{\omega}^{(2a+k_1,2-k_2;w)}) \to R\Gamma(\Sh{X}_0(\pp_1)_{[a,1]}, \underline{\omega}^{(2a+k_1,2-k_2;w)}) \xrightarrow{+1}
    \]
    Then the same proof as in \cite[Lemma 5.11]{BoxerPilloni_ModularCurve} shows that $U_{\pp_1}$ acts with slope $\geq 0$ on $R\Gamma(\Sh{X}_0(\pp_1)_{[a,1]}, \underline{\omega}^{(2a+k_1,2-k_2;w)})$, and with slope $\geq 2a+k_1-1$ on $R\Gamma_{\Sh{X}_0(\pp_1)_{[0,a[}}(\Sh{X}_0(\pp_1), \underline{\omega}^{(2a+k_1,2-k_2;w)})$. The claim follows by applying $e^{\leq h}$.
\end{proof}

Let $\Pi$ be a $\pp_1$-stabilisation of a classical cuspidal Hecke eigenform of weight $(k_1,k_2;w)$ with $k_1, k_2 \geq 2$ and $U_{\pp_1}$-slope $\leq h < k_1-1$, and assume it is defined over $K$. For a suitable choice of the level $K^{(p)}$ away from $p$ (of the form $\{ \gamma : \gamma = \left(\begin{smallmatrix} \star & \star \\ 0 & 1 \end{smallmatrix}\right) \bmod \mathfrak{N}\}$ where $\mathfrak{N}$ is the conductor of $\Pi$), the automorphic representation corresponding to $\Pi$ contributes to $H^1(\Sh{X}_0(\pp_1), \underline{\omega}^{(1\pm(k_1-1),1\pm (k_2-1);w)})$, each with multiplicity 1. We will show that the corresponding $H^1(\Sh{X}_0(\pp_1), \underline{\omega}^{(k_1,2-k_2;w)})$ class can be interpolated in a Coleman family of slope $\leq h$ in $H^1$ of the overconvergent cohomology, in a small enough neighbourhood of $k_1$ inside $\Sh{W}_r$.

\begin{lemma}
    $H^1(\Sh{X}_0(\pp_1)_{[a,1]}, \Omega^{(2\kappa+k_1,2-k_2;w)})^{\leq h}$ is locally free in a neighbourhood of $\kappa = 0$.
\end{lemma}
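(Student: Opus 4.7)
The plan is to exploit the perfect complex representation $M^\bullet := M^\bullet(2\kappa+k_1, 2-k_2; w)$ of $\Sh{O}_{\Sh{W}_r}$-modules from the preceding proposition. Since $\Sh{O}_{\Sh{W}_r}$ is regular, if the derived fiber $M^\bullet \otimes^{\mathbb{L}}_{\kappa = 0} K$ has cohomology concentrated in degree $1$, then by Nakayama and the derived base-change spectral sequence $H^0(M^\bullet)$ and $H^2(M^\bullet)$ are supported away from $\kappa = 0$; in a neighborhood of $\kappa = 0$ the complex is then quasi-isomorphic to $H^1(M^\bullet)[-1]$, which must be finitely generated projective over the local ring, hence locally free. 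The problem therefore reduces to showing that the classical slope $\leq h$ cohomology at $\kappa = 0$ is concentrated in degree $1$.

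By the preceding corollary, the specialisation at $\kappa = 0$ computes $R\Gamma(\Sh{X}_0(\pp_1)_{[a,1]}, \underline{\omega}^{(k_1, 2-k_2; w)})^{\leq h}$, and since the hypothesis $h < k_1 - 1$ is exactly the classicality condition $2\cdot 0 + k_1 - 1 > h$, this agrees with $R\Gamma(\Sh{X}_0(\pp_1), \underline{\omega}^{(k_1, 2-k_2; w)})^{\leq h}$. For the vanishing of $H^0$, observe that $k_2 \geq 2$ forces $2 - k_2 \leq 0$, so $\underline{\omega}^{(k_1, 2-k_2; w)}$ is built using a non-positive power of the ample bundle $\omega_{\Sh{A},2}$ on each connected component of the proper variety $\Sh{X}_0(\pp_1)$; its global sections must therefore vanish. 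For the vanishing of $H^2$, I would apply Serre (Grothendieck) duality on $\Sh{X}_0(\pp_1)$, using the Kodaira--Spencer isomorphism $\Omega^1_{\Sh{X}_0(\pp_1)} \simeq \underline{\omega}^{(2,0;0)} \oplus \underline{\omega}^{(0,2;0)}$, so that the dualising sheaf is (up to a boundary correction) $\underline{\omega}^{(2,2;0)}$. Duality interchanges $H^2$ of our bundle with $H^0$ of a sheaf whose first weight component is $2 - k_1 \leq 0$, which again vanishes by ampleness of $\omega_{\Sh{A},1}$.

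The main obstacle will be the $H^2$ step: since $\Sh{X}_0(\pp_1)$ is only normal and l.c.i.\ rather than smooth, Serre duality requires Grothendieck duality on an l.c.i.\ scheme, and one must verify that the slope $\leq h$ decomposition is compatible with the duality pairing (in particular, that the dual decomposition lands in a range of slopes where the classicality statement still applies, possibly after pre-composing with an Atkin--Lehner involution to swap $U_{\pp_1}$ with its dual). A clean alternative is to argue on a smooth toroidal compactification and track the boundary contributions via the cuspidality assumption on $\Pi$. Once the derived fiber vanishing is established, the locally free conclusion for $H^1(M^\bullet)$ in a neighborhood of $\kappa = 0$ follows from the formal argument of the first paragraph.
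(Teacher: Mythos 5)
Your overall skeleton (perfect complex, derived base change at $\kappa=0$, classicality at slope $\le h < k_1-1$, vanishing of classical $H^0$) is the same as the paper's, but you have built in an unnecessary extra requirement, and it is precisely the step you leave unresolved. Since the weight space is one-dimensional and regular, the base-change spectral sequence degenerates to short exact sequences
\[
0 \to H^j(M^\bullet)\otimes_{\Sh{O}(\Sh{W}_r)}^{\kappa=0} K \to H^j\bigl(M^\bullet \otimes^{\mathbb{L},\kappa=0} K\bigr) \to \Tor_1^{\Sh{O}(\Sh{W}_r)}\bigl(H^{j+1}(M^\bullet), K\bigr) \to 0 ,
\]
and local freeness of $H^1(M^\bullet)$ near $\kappa=0$ needs only $\Tor_1(H^1(M^\bullet),K)=0$, which by the $j=0$ sequence is a quotient (graded piece) of the classical $H^0(\Sh{X}_0(\pp_1)_{[a,1]}, \underline{\omega}^{(k_1,2-k_2;w)})^{\le h} \simeq H^0(\Sh{X}_0(\pp_1), \underline{\omega}^{(k_1,2-k_2;w)})^{\le h}$. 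The $j=1$ sequence shows that $\Tor_1(H^2(M^\bullet),K)$ only measures the failure of the fibre of $H^1$ to fill up the classical $H^1$; it has no bearing on flatness of $H^1$ itself. So the entire $H^2$/Serre duality/Atkin--Lehner discussion --- which you correctly flag as the main obstacle, involving compatibility of the slope decomposition with the duality pairing on the non-smooth model --- can simply be deleted. This is exactly what the paper does: local criterion of flatness, the Tor spectral sequence of Boxer--Pilloni (Thm.\ 6.5.2), and vanishing of the classical $H^0$.

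One further caveat on the step you do need: your justification of the $H^0$ vanishing, ``$2-k_2\le 0$, so the sheaf is a non-positive power of the ample bundle $\omega_{\Sh{A},2}$, hence has no sections,'' is not an argument as stated, because the sheaf also carries the positive power $\omega_{\Sh{A},1}^{k_1}$ (on a product-like surface a sheaf of the form $L_1^{k_1}\otimes L_2^{0}$ certainly can have sections), and the borderline case $k_2=2$, where the second weight component is exactly $0$, is not covered by any ampleness heuristic. What is really being used is the vanishing theorem for Hilbert modular forms whose weight has a non-positive component (the paper cites Goren, Ch.~2, Prop.~3.6). With that citation in place of the ampleness remark, and with the $H^2$ detour removed, your argument reduces to the paper's proof.
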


\begin{proof}
    By the local criterion of flatness, we need to show that \[\Tor_1^{\Sh{O}_{\Sh{W}_r}}(H^1(\Sh{X}_0(\pp_1)_{[a,1]}, \Omega^{(2\kappa+k_1,2-k_2;w)})^{\leq h}, K) = 0\]
    where $\Sh{O}_{\Sh{W}_r} \to K$ defines the point $\kappa = 0$. By \cite[Theorem 6.5.2]{boxer2021highercolemantheory} this $\Tor$ group is a graded piece of $H^0(\Sh{X}_0(\pp_1)_{[a,1]}, \underline{\omega}^{(k_1,2-k_2;w)})^{\leq h} \simeq H^0(\Sh{X}_0(\pp_1), \underline{\omega}^{(k_1,2-k_2;w)})^{\leq h}$ which is $0$ since $k_2 \geq 2$ \cite[Ch 2, Prop. 3.6]{GorenHilbertbook}.
\end{proof}

Let $\Sh{U}$ be a sufficiently small neighbourhood of $\kappa = 0$ over which $H^1(\Sh{X}_0(\pp_1)_{[a,1]}, \underline{\omega}^{(2\kappa+k_1,2-k_2;w)})^{\leq h}$ is finite, locally free, and call this module $H^1(2\kappa_{\Sh{U}}+k_1, 2-k_2;w)^{\leq h}$ which for brevity of notation we'll write as $H^1(2\kappa_{\Sh{U}})^{\leq h}$. Similarly, write $H^2(2\kappa_{\Sh{U}}+k_1, 2-k_2;w)^{\leq h}$ and its shorthand $H^2(2\kappa_{\Sh{U}})^{\leq h}$. The Tor-spectral sequence \cite[Theorem 6.5.2]{boxer2021highercolemantheory} referred to above gives a short exact sequence
\begin{equation}\label{E04}
0 \to H^1(2\kappa_{\Sh{U}})^{\leq h}{\otimes}_{\Sh{O}(\Sh{U})}^{\kappa=0} K \to H^1(k_1,2-k_2;w)^{\leq h} \to \Tor_1^{\Sh{O}(\Sh{U})}(H^2(2\kappa_{\Sh{U}})^{\leq h}, K) \to 0.
\end{equation}
Let $I_{\Pi}$ be the kernel of $\Sh{O}(\Sh{U}) \otimes \mathbb{T} \xrightarrow{(\kappa=0) \otimes \Pi} K$. 

\begin{prop}\label{P3018}
    Localising sequence (\ref{E04}) at $I_{\Pi}$ induces an isomorphism 
    \[
    H^1(2\kappa_{\Sh{U}})^{\leq h}_{I_{\Pi}}{\otimes}_{\Sh{O}(\Sh{U})}^{\kappa=0} K \xrightarrow{\sim} H^1(k_1,2-k_2;w)_{I_{\Pi}}^{\leq h}.
    \]
    In particular, $H^1(2\kappa_{\Sh{U}})^{\leq h}_{I_{\Pi}}$ is free of rank 1 over the localisation of $\Sh{O}(\Sh{U})$ at $\kappa = 0$, and by possibly shrinking $\Sh{U}$, there is a unique homomorphism $\underline{\Pi} \colon \mathbb{T} \to \Sh{O}(\Sh{U})$ which specialises to $\Pi$ at $\kappa = 0$.
\end{prop}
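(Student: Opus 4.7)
The plan is to reduce the proposition to two inputs: (i) the multiplicity-one property of $\Pi$ in coherent cohomology at weight $(k_1,2-k_2;w)$, mentioned in the paragraph just before the previous lemma, and (ii) the local freeness of $H^1(2\kappa_{\Sh{U}})^{\leq h}$ near $\kappa=0$ established in that lemma. The crux is to kill the $\Tor_1$ term in (\ref{E04}) after localization at $I_\Pi$; once that is done, the displayed isomorphism is immediate, and the remaining claims follow from Nakayama.

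The main technical step is the vanishing $H^2(k_1,2-k_2;w)^{\leq h}_{I_\Pi}=0$. By the classicality corollary above, valid for $h<k_1-1$, this reduces to the vanishing of the $\Pi$-isotypic component of $H^2(\Sh{X}_0(\pp_1),\underline{\omega}^{(k_1,2-k_2;w)})^{\leq h}$. The standard decomposition of cuspidal coherent cohomology on a Hilbert modular surface says that a cuspidal eigensystem of weight $(k_1,k_2)$ appears in $H^i(\underline{\omega}^{(1\pm(k_1-1),1\pm(k_2-1);w)})$ only in the degree $i$ equal to the number of ``minus'' signs, so at weight $(k_1,2-k_2;w)$ only $H^1$ is hit. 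Alternatively, Serre duality identifies $H^2(\underline{\omega}^{(k_1,2-k_2;w)})^{\vee}$ with $H^0(\underline{\omega}^{(2-k_1,k_2;2-w)})$, whose cuspidal part vanishes since $2-k_1\le 0$.

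Next, the Tor-spectral sequence of \cite[Theorem 6.5.2]{boxer2021highercolemantheory} applied to the perfect complex $M^\bullet(2\kappa+k_1,2-k_2;w)$ in top degree yields an isomorphism $H^2(2\kappa_{\Sh{U}})^{\leq h}\otimes^{\kappa=0}_{\Sh{O}(\Sh{U})}K\xrightarrow{\sim} H^2(k_1,2-k_2;w)^{\leq h}$, the next Tor-correction coming from $H^3$, which vanishes for reasons of cohomological dimension on the surface. Localizing at $I_\Pi$ makes the right-hand side zero by the previous paragraph. Since $H^2(2\kappa_{\Sh{U}})^{\leq h}_{I_\Pi}$ is finitely generated over the local ring $(\Sh{O}(\Sh{U})\otimes\mathbb{T})_{I_\Pi}$ (whose residue field is $K$), Nakayama's lemma forces it to vanish after possibly shrinking $\Sh{U}$ around $\kappa=0$. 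Hence $\Tor_1^{\Sh{O}(\Sh{U})}(H^2(2\kappa_{\Sh{U}})^{\leq h},K)_{I_\Pi}=0$, and the $I_\Pi$-localization of (\ref{E04}) collapses to the asserted isomorphism.

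Finally, multiplicity one makes $H^1(k_1,2-k_2;w)^{\leq h}_{I_\Pi}$ a one-dimensional $K$-vector space; combined with the isomorphism just proved and the local freeness of $H^1(2\kappa_{\Sh{U}})^{\leq h}$ from the previous lemma, Nakayama gives that $H^1(2\kappa_{\Sh{U}})^{\leq h}_{I_\Pi}$ is free of rank one over the localization of $\Sh{O}(\Sh{U})$ at $\kappa=0$. The Hecke algebra $\mathbb{T}$ then acts through $\End$ of this rank-one free module, i.e.\ through the local ring itself; after shrinking $\Sh{U}$ so that the images of all Hecke operators of interest lie in $\Sh{O}(\Sh{U})$, one obtains $\underline{\Pi}\colon\mathbb{T}\to\Sh{O}(\Sh{U})$ specialising to $\Pi$ at $\kappa=0$, uniquely determined by the rank-one condition. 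The main obstacle I foresee is pinning down the vanishing of the $\Pi$-part of $H^2$: the coherent Matsushima decomposition (or Serre duality) is classical over the open Hilbert modular surface, but some care is needed to confirm it survives toroidal compactification and the passage to the $\pp_1$-stabilisation.
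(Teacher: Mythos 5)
Your proposal is correct and takes essentially the same route as the paper: localise the sequence (\ref{E04}) at $I_{\Pi}$, show $H^2(2\kappa_{\Sh{U}})^{\leq h}_{I_{\Pi}}=0$ (hence its $\Tor_1$ vanishes) by combining the non-contribution of $\Pi$ to $H^2$ in weight $(k_1,2-k_2;w)$ with a Nakayama-type argument for the finite module over the local ring, and then read off the isomorphism, rank-one freeness and the family $\underline{\Pi}$. The only difference is that you spell out inputs the paper leaves implicit (classicality in degree $2$, the degree-count/Serre-duality vanishing of the $\Pi$-part of classical $H^2$, and the top-degree base-change isomorphism using $H^3=0$), which is a harmless elaboration rather than a different argument.
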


\begin{proof}
Consider the projective resolution $0 \to {\Sh{O}}(\Sh{U})_{(t)} \xrightarrow{\times t} {\Sh{O}}(\Sh{U})_{(t)} \to K$ of $K$, where $t$ is a local coordinate around $0$ and ${\Sh{O}}(\Sh{U})_{(t)}$ is the localisation of $\Sh{O}(\Sh{U})$ at $\kappa = 0$. Since $\Pi$ doesn't contribute to $H^2(k_1,2-k_2;w)$, and hence to $H^2(2\kappa_{\Sh{U}})^{\leq h}{\otimes}_{\Sh{O}(\Sh{U})}^{\kappa=0} K$, we have an exact sequence
\[
H^2(2\kappa_{\Sh{U}})^{\leq h}_{I_{\Pi}}{\otimes}{\Sh{O}}(\Sh{U})_{(t)} \xrightarrow{\times t} H^2(2\kappa_{\Sh{U}})^{\leq h}_{I_{\Pi}}{\otimes}{\Sh{O}}(\Sh{U})_{(t)} \to \left(H^2(2\kappa_{\Sh{U}})^{\leq h}{\otimes}_{\Sh{O}(\Sh{U})}^{\kappa=0} K\right)_{I_{\Pi}}
\]
where the rightmost term is $0$. This implies that the finite $\Sh{O}(\Sh{U})_{(t)}$-module $H^2(2\kappa_{\Sh{U}}+k_1, 2-k_2;w)^{\leq h}_{I_{\Pi}} = 0$, and hence the same is true of its $\Tor_1$ group. The rest of the claims immediately follow.
\end{proof}

\section{Pushforward from the modular curve}
In this section we will consider Shimura varieties for both the groups $H$ and $G$, and hence will use subscript notation $(\cdot)_{H}, (\cdot)_G$ to distinguish them; we take the prime-to-$p$ level for $H$ to be $K_H^{(p)} \coloneqq H \cap K^{(p)}$. We note that there is a finite map 
\[
\iota \colon X_{H,0}(p) \xhookrightarrow{} X_{G,0}(\pp_1)
\]
of $\Z_p$-schemes. We change base to $K$, and consider the associated rigid analytic spaces. The embedding $\iota$ has a moduli interpretation given by sending an elliptic curve $E$ with cyclic $p$-subgroup $C \subset E[p]$ (and other prime to $p$ level data) to the abelian surface $E \otimes \Sh{O}_F$, together with $\operatorname{Iw}_{\pp_1}$ level given by $C$. Note that since the group embedding $H \to G$ factors through $G^*$, the embedding of the modular curve $\Sh{X}_{H,0}(p)$ factors through $\Sh{M}_{0}(\pp_1)^{\mathfrak{d}^{-1}}$, where the $\mathfrak{d}^{-1}$-polarisation on $E \otimes \Sh{O}_F$ is induced by the principal polarisation on $E$. The $p$-divisible group of $E \otimes \Sh{O}_F$ is given by two copies of $E[p^{\infty}]$, and hence $\deg(\iota(x)) = \deg(x)$ for any $x \in \Sh{X}_{H,0}(p)$.

Let $\Sh{E} \to \Sh{X}_{H,0}(p)$ be the universal generalised elliptic curve. We write $\underline{\omega}_{\Sh{E}}$ for the modular sheaf of weight $1$, and $\Sh{H}_{\Sh{E}}$ for the canonical extension of the relative de Rham sheaf of the universal elliptic curve. As in the case of $G$, we have a Hodge filtration
\[
0 \to \underline{\omega}_{\Sh{E}} \to \Sh{H}_{\Sh{E}} \to \underline{\omega}_{\Sh{E}^{\vee}}^{\vee} \to 0.
\]
For any integer $k$, we denote the modular sheaf of weight $k$ by $\underline{\omega}_H^k := \underline{\omega}_{\Sh{E}}^k$. As in the theory developed in $\S\ref{S2}$ there exists integral models $\omega_{\Sh{E}}^{\sharp}$ and $\Sh{H}_{\Sh{E}}^{\sharp}$ of $\underline{\omega}_{\Sh{E}}$ and $\Sh{H}_{\Sh{E}}$ respectively, that sit in a short exact sequence induced by the Hodge filtration
\[
0 \to \omega_{\Sh{E}}^{\sharp} \to \Sh{H}_{\Sh{E}}^{\sharp} \to \underline{\Ha}^{\frac{p}{p-1}}\underline{\omega}_{\Sh{E}}^{\vee} \to 0.
\]
Moreover $\omega_{\Sh{E}}^{\sharp}$ is equipped with a marked section $s_H$ modulo $p^{n-v\frac{p^n}{p-1}}$. Hence there exists a nearly overconvergent sheaf $\Sh{N}_H^{2\kappa}$ of weight $2\kappa$ over the base change of $\Sh{X}_{H,0}(\pp_1)_{[a,1]}$ to $\Sh{W}_r$. 

Let $D_H,D_G$ be the cuspidal divisors of $X_{H,0}(p), X_{G,0}(\pp_1)$ respectively. The dualising sheaves of $\Sh{X}_{H,0}(p)$ and $\Sh{X}_{G,0}(\pp_1)$ are $\Omega_{\Sh{X}_{H,0}(p)/K} \simeq \underline{\omega}_H^2(-D_H) \otimes (\wedge^2\Sh{H}_{\Sh{E}})^{-1}$ and $\Omega^2_{\Sh{X}_{G,0}(\pp_1)/K} \simeq \underline{\omega}_G^{(2,2;0)}(-D_G)$.

We have a pushforward map 
\[
H^0\left(\Sh{X}_{H,0}(p)_{[a,1]}, \iota^*(\Sh{N}_G^{(2\kappa+k_1-2,-k_2;w)}(D_G)) \otimes \underline{\omega}_H^2(-D_H)\right) \to H^1(\Sh{X}_{G,0}(\pp_1)_{[a,1]}, \Sh{N}_G^{(2\kappa+k_1,2-k_2;w)})
\]

\begin{lemma}
    There is a map 
    \[
    H^0\left(\Sh{X}_{H,0}(p)_{[a,1]}, \Sh{N}_H^{2\kappa+k_1-k_2}\right) \otimes |\det(\cdot)|_{\A{\Q}{(p)}}^{\frac{2\kappa+k_1-k_2}{2}-w} \to H^0\left(\Sh{X}_{H,0}(p)_{[a,1]}, \iota^*(\Sh{N}_G^{(2\kappa+k_1-2,-k_2;w)}(D_G)) \otimes \underline{\omega}_H^2(-D_H)\right)
    \]
\end{lemma}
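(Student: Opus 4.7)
The plan is to construct the map geometrically, via multiplication by a power of the canonical marked section of the Hodge bundle, and then to check that the source's $|\det|$-twist exactly cancels a residual power of $\wedge^2 \Sh{H}_{\Sh{E}}$ appearing on the target.

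First, I would work out the pullback sheaf $\iota^*(\Sh{N}_G^{(2\kappa+k_1-2,-k_2;w)}(D_G)) \otimes \omega_H^2(-D_H)$. Under $\iota$, the abelian surface $E \otimes \Sh{O}_F$ has each $\pp_i$-divisible group canonically isomorphic to $E[p^\infty]$ via $\sigma_i$; hence all the constructions of Section~\ref{S2} are $\iota$-functorial: we have $\iota^*H^{\can}_n(\Sh{A}) = H^{\can}_n(\Sh{E})$, $\iota^*\omega^{\sharp}_{\Sh{A},1} = \omega_{\Sh{E}}^{\sharp}$, $\iota^*\Sh{H}^{\sharp}_{\Sh{A},1} = \Sh{H}_{\Sh{E}}^{\sharp}$ with marked sections matching, $\iota^*\omega_{\Sh{A},2} = \omega_{\Sh{E}}$, and $\iota^*\wedge^2\Sh{H}_{\Sh{A},i} = \wedge^2\Sh{H}_{\Sh{E}}$. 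The functoriality of the $\Sym^{\kappa}$-construction gives $\iota^*\Sh{N}_G^{(2\kappa,0;0)} \simeq \Sh{N}_H^{2\kappa}$, and, using also the matching of cusps $\iota^* D_G = D_H$, one computes
\[
\iota^* \bigl(\Sh{N}_G^{(2\kappa+k_1-2,-k_2;w)}(D_G)\bigr) \otimes \omega_H^2(-D_H) \;\simeq\; \Sh{N}_H^{2\kappa+k_1-2} \otimes \omega_{\Sh{E}}^{2-k_2} \otimes (\wedge^2\Sh{H}_{\Sh{E}})^{a}
\]
for some exponent $a$ depending linearly on $\kappa,k_1,k_2,w$, which is read off from the explicit formula for $\underline{\omega}_G^{(k_1-2,-k_2;w)}$.

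Second, I would construct the map itself. The inclusion $\omega_{\Sh{E}}^{\sharp} \hookrightarrow \Sh{H}_{\Sh{E}}^{\sharp}$, in the formalism of vector bundles with marked sections, gives a closed embedding $\V{\omega_{\Sh{E}}^{\sharp}}{s} \hookrightarrow \V{\Sh{H}_{\Sh{E}}^{\sharp}}{s}$, which by pushforward and restriction to the appropriate weight characters yields a natural ``multiplication by $s^{k_2-2}$'' map of Banach sheaves
\[
\omega_{\Sh{E}}^{k_2-2} \otimes \Sh{N}_H^{2\kappa+k_1-k_2} \;\hookrightarrow\; \Sh{N}_H^{2\kappa+k_1-2}.
\]
At integer weight $\kappa = n$ this specialises to the embedding $\omega_{\Sh{E}}^{k_2-2} \otimes \Sym^{2n+k_1-k_2} \Sh{H}_{\Sh{E}}^{\sharp} \hookrightarrow \Sym^{2n+k_1-2} \Sh{H}_{\Sh{E}}^{\sharp}$ obtained by multiplication in the symmetric algebra, landing in the lowest $(k_2-1)$ Hodge-filtered pieces. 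Rearranging this yields a map $\Sh{N}_H^{2\kappa+k_1-k_2} \to \omega_{\Sh{E}}^{2-k_2} \otimes \Sh{N}_H^{2\kappa+k_1-2}$ sending $f \mapsto s^{-(k_2-2)} \otimes (s^{k_2-2} \cdot f)$, where $s$ is the canonical marked section.

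Third, I would match the $|\det|$-twist on the source with the $(\wedge^2 \Sh{H}_{\Sh{E}})^a$ factor on the target. On the modular curve, the Weil pairing on $\Sh{E}$ gives a canonical trivialization $\wedge^2\Sh{H}_{\Sh{E}} \simeq \Sh{O}_{\Sh{X}_H}$; however, this trivialization is compatible with the prime-to-$p$ Hecke action only after introducing a central-character correction, so that tensoring by $(\wedge^2\Sh{H}_{\Sh{E}})^c$ corresponds after trivialization to twisting the Hecke action by $|\det(\cdot)|^{-c}_{\A{\Q}{(p)}}$. A direct computation of $a$, using the definition $\underline{\omega}_G^{(k_1,k_2;w)} = \bigotimes_{i}(\wedge^2\Sh{H}_{\Sh{A},i})^{(w-k_i)/2} \otimes \omega_{\Sh{A},i}^{k_i}$, the $\kappa$-dependent correction needed to keep the central character equal to $w$ when specialising $\Sh{N}_G^{(2\kappa,0;0)}$, and the $\omega_H^2$-twist from the dualising sheaf on the modular curve, shows that $-a = \tfrac{2\kappa+k_1-k_2}{2} - w$, so the required twist on the source is exactly $|\det(\cdot)|^{(2\kappa+k_1-k_2)/2 - w}_{\A{\Q}{(p)}}$. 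The main obstacle is this careful bookkeeping of twists by $\wedge^2\Sh{H}_{\Sh{E}}$ and the identification of central-character conventions between the $G$-side and the $H$-side; the geometric content of the map -- multiplication by $s^{k_2-2}$ -- is elementary once the sheaves are correctly identified.
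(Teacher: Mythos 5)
Your core input is the same as the paper's: the splitting $(\Sh{E}\otimes\Sh{O}_F)[p^\infty]=\Sh{E}[p^\infty]\oplus\Sh{E}[p^\infty]$ makes all the sharp constructions $\iota$-functorial, so $\iota^*\omega^{\sharp}_{\Sh{A},1}\simeq\omega^{\sharp}_{\Sh{E}}$ and $\iota^*\Sh{H}^{\sharp}_{\Sh{A},1}\simeq\Sh{H}^{\sharp}_{\Sh{E}}$ compatibly with marked sections and Hodge filtrations, whence $\iota^*\Sh{N}_G^{(2\kappa,0;0)}\simeq\Sh{N}_H^{2\kappa}$. That is exactly the paper's argument. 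Where you diverge is in how the integer part of the weight is treated, and there your construction is not the paper's map and contains concrete slips. The paper writes $\Sh{N}_G^{(2\kappa+k_1-2,-k_2;w)}\simeq\Sh{N}_G^{(2\kappa,0;0)}\hat{\otimes}\underline{\omega}_G^{(k_1-2,-k_2;w)}$ and uses the canonical identification $\iota^*\underline{\omega}_G^{(k_1-2,-k_2;w)}\simeq\underline{\omega}_H^{k_1-2-k_2}$ (with $\wedge^2\Sh{H}_{\Sh{E}}$ trivialised via the polarisation); with the analogous convention $\Sh{N}_H^{2\kappa+m}=\Sh{N}_H^{2\kappa}\otimes\underline{\omega}_H^{m}$ on the $H$-side, the sheaf map is simply this identification tensored with $\underline{\omega}_H^2$ and the divisor inclusion $\iota^*\Sh{O}(D_G)\otimes\Sh{O}(-D_H)$ --- no multiplication by a power of the marked section occurs, and in particular the map does not shift the filtration. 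Your ``multiplication by $s^{k_2-2}$'' produces a genuinely different, filtration-shifting map; since the lemma's map is later paired against $\nu_{\underline{\Pi}}$ and compared with a classical cup product, using your map would change the normalisation entering the interpolation formula. Moreover, inside that step there is a direction error: $\mathbb{V}(\cdot)$ is built from linear functionals, so the inclusion $\omega^{\sharp}_{\Sh{E}}\hookrightarrow\Sh{H}^{\sharp}_{\Sh{E}}$ induces contravariantly a projection $\V{\Sh{H}^{\sharp}_{\Sh{E}}}{s}\to\V{\omega^{\sharp}_{\Sh{E}}}{s}$, not a closed embedding $\V{\omega^{\sharp}_{\Sh{E}}}{s}\hookrightarrow\V{\Sh{H}^{\sharp}_{\Sh{E}}}{s}$ (that would amount to a splitting of the Hodge filtration); the multiplication map you want is pullback of functions along this projection, and its image at integer weight is not ``the lowest $(k_2-1)$ filtered pieces''.

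On the twist: your exponent claim $-a=\tfrac{2\kappa+k_1-k_2}{2}-w$ is not right as stated. The power of $\wedge^2\Sh{H}_{\Sh{E}}$ coming from pulling back $\underline{\omega}_G^{(k_1-2,-k_2;w)}=\bigotimes_i(\wedge^2\Sh{H}_{\Sh{A},i})^{\frac{w-k_i}{2}}\otimes\omega_{\Sh{A},i}^{k_i}$ is $\kappa$-independent; the $\kappa$-dependence of the $|\det(\cdot)|_{\A{\Q}{(p)}}^{\frac{2\kappa+k_1-k_2}{2}-w}$ factor comes from the descent datum defining $\Sh{N}_G^{(2\kappa,0;0)}$ on $\mathfrak{X}_v$ (the $\kappa^{-1}$-isotypic part for the $F^{\times,(p)}_+$-action), which your bookkeeping does not track, and a ``$\kappa$-th power'' of $\wedge^2\Sh{H}_{\Sh{E}}$ only makes sense after trivialisation anyway. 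This is, however, a secondary issue: the $|\det|$ factor is only a renormalisation of the prime-to-$p$ Hecke-module structure so that central characters match (as the paper's remark notes), and it is not needed to construct the map of $H^0$'s itself.
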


\begin{rem}
    The $|\det|$ factor is to ensure that the central characters match up.
\end{rem}

\begin{proof}
    It is enough to construct a map of sheaves 
    \[
    \Sh{N}_H^{2\kappa+k_1-2-k_2} \to \iota^*\Sh{N}_G^{(2\kappa+k_1-2,-k_2;w)}.
    \]
    In fact since $\Sh{N}_G^{(2\kappa+k_1-2,-k_2;w)} \simeq \Sh{N}_G^{(2\kappa,0;0)}\hat{\otimes}\underline{\omega}_G^{(k_1-2,-k_2;w)}$, and $\iota^*\underline{\omega}_G^{(k_1-2,-k_2;w)} \simeq \underline{\omega}_H^{k_1-2-k_2}$ canonically, it is enough to construct the map $\Sh{N}_H^{2\kappa} \to \iota^*\Sh{N}_G^{(2\kappa,0;0)}$. For this we note that working integrally over the formal models of the Shimura varieties with spherical level at $p$, and corresponding Igusa varieties, \[\iota^*\Sh{H}_{\Sh{A},1} \oplus \iota^*\Sh{H}_{\Sh{A},2} = \iota^*\Sh{H}_{\Sh{A}} \simeq \Sh{H}_{\Sh{E}}\otimes \Sh{O}_F = \Sh{H}_{\Sh{E}}\oplus \Sh{H}_{\Sh{E}}.\] Moreover, since $(\Sh{E}\otimes \Sh{O}_F)[p^{\infty}] = \Sh{E}[p^{\infty}] \oplus \Sh{E}[p^{\infty}]$, we have a canonical identification $\Sh{H}^{\sharp}_{\Sh{E}} \simeq \iota^*\Sh{H}_{\Sh{A},1}^{\sharp}$ that sends the marked section to the marked section and induces isomorphism on the graded pieces of the respective Hodge filtration. Hence we get a canonical isomorphism $\Sh{N}_H^{2\kappa} \simeq \iota^*\Sh{N}_G^{(2\kappa,0;0)}$.  
\end{proof}

\begin{cor}
    We have a pushforward map
    \[
    \iota_* \colon H^0\left(\Sh{X}_{H,0}(p)_{[a,1]}, \Sh{N}_H^{2\kappa+k_1-k_2}\right) \otimes |\det(\cdot)|_{\A{\Q}{(p)}}^{\frac{2\kappa+k_1-k_2}{2}-w} \xrightarrow{} H^1(\Sh{X}_{G,0}(\pp_1)_{[a,1]}, \Sh{N}_G^{(2\kappa+k_1,2-k_2;w)}).
    \]
\end{cor}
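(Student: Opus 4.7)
The plan is to obtain $\iota_*$ by composing two maps that are essentially already at hand. Starting with a nearly overconvergent form in $H^0(\Sh{X}_{H,0}(p)_{[a,1]}, \Sh{N}_H^{2\kappa+k_1-k_2})$ (tensored with the prescribed character of $\Q^\times$), I would first apply the preceding lemma to produce a section in $H^0(\Sh{X}_{H,0}(p)_{[a,1]}, \iota^*(\Sh{N}_G^{(2\kappa+k_1-2,-k_2;w)}(D_G)) \otimes \underline{\omega}_H^2(-D_H))$. Then I would compose with the Gysin-type pushforward map displayed in the paragraph just before the lemma, which lands in $H^1(\Sh{X}_{G,0}(\pp_1)_{[a,1]}, \Sh{N}_G^{(2\kappa+k_1,2-k_2;w)})$. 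Since the target of the first map is by construction identical to the source of the second, the composition is well defined without any further sheaf manipulation.

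It only remains to recall the structural input underlying the Gysin map itself. The embedding $\iota \colon \Sh{X}_{H,0}(p)_{[a,1]} \hookrightarrow \Sh{X}_{G,0}(\pp_1)_{[a,1]}$ is a codimension-one closed immersion of smooth rigid-analytic spaces, and coherent duality produces, for any coherent $\mathcal{F}$ on the target $X$, a natural map $H^0(Y, \iota^*\mathcal{F} \otimes \omega_{Y/X}) \to H^1(X, \mathcal{F})$, where $\omega_{Y/X} = \omega_Y \otimes \iota^*\omega_X^{-1}$. Taking $\mathcal{F} = \Sh{N}_G^{(2\kappa+k_1,2-k_2;w)}$ and using the dualising sheaf identifications recalled before the lemma, the twist $\iota^*(\underline{\omega}_G^{(-2,-2;0)}(D_G)) \otimes \underline{\omega}_H^2(-D_H)$ matches $\omega_{Y/X}$, up to the factor of $(\wedge^2 \Sh{H}_{\Sh{E}})^{-1}$ absorbed into the $|\det|$-normalization that aligns central characters. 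The substantive work lies in the lemma; the corollary is a formal consequence, and I do not foresee any additional obstacle.
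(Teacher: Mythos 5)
Your proposal is correct and is exactly the paper's (implicit) argument: the corollary is simply the composite of the lemma with the Gysin-type pushforward displayed immediately before it, so no further work is needed. The only minor imprecisions are that $\iota$ is stated in the paper as a finite map rather than necessarily a closed immersion, and that $\Sh{N}_G^{(2\kappa+k_1,2-k_2;w)}$ is a Banach sheaf (a completed colimit of coherent filtered pieces) rather than coherent, so the duality-theoretic pushforward is applied levelwise and extended by continuity — neither point affects the argument.
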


\section{Adelic Eisenstein series and \texorpdfstring{$p$}{p}-adic Eisenstein measure}\label{S05}
In this section we recall generalities about adelic Eisenstein series, and the two variable Katz $p$-adic Eisenstein measure following \cite{LPSZ} and \cite{grossi2024padicasailfunctionsquadratic}.

For a Schwartz function $\Phi \colon \A{}{2} \to \C$, and $\chi$ a Dirichlet character with corresponding adelic Hecke character $\hat{\chi}$, define a global Siegel section and associated Eisenstein series by 
\[
    f^{\Phi}(g;\chi, s) := |\det{g}|_{\A{}{}}^s \int_{\A{}{\times}} \Phi((0,a)g)\hat{\chi}(a)|a|_{\A{}{}}^{2s}\mathrm{d}^{\times}a, 
\]
\[
    E^{\Phi}(g;\chi,s) := \sum_{\gamma \in B(\Q)\backslash\GL_2(\Q)} f^{\Phi}(\gamma g;\chi,s).
\]
The sum converges absolutely and uniformly on compact subsets for $\mathrm{Re}(s) > 1$, has a meromorphic continuation in $s$, and defines a function on the quotient $\GL_2(\Q)\backslash \GL_2(\A{}{})$ with central character $\hat{\chi}^{-1}$.

Choosing the Schwartz function at $\infty$ to be 
\[
\Phi_{\infty}^{(k)}(x,y) := 2^{1-k}(x+iy)^k e^{-\pi(x^2+y^2)} 
\]
for any $k \in \Z_{\geq 1}$, and letting $\Phi = \Phi_{\infty}^{(k)}\Phi_{\mathrm{f}}$ for any choice of $\Phi_{\mathrm{f}} \in \Sh{S}(\A{\mathrm{f}}{2}, \C)$, the function $E^{k,\Phi_{\mathrm{f}}}(-; \chi, s) := E^{\Phi_{\infty}^{(k)}\Phi_{\mathrm{f}}}(-;\chi,s)$ is a nearly holomorphic modular form of weight $k$ for any $s$ with $1-k/2\leq s \leq k/2$ with $s \equiv k/2\ \mathrm{mod}\ \Z$, which is moreover holomorphic for the two end values $s = 1-k/2$ and $k/2$. Moreover, if $\Phi_{\mathrm{f}}$ and $\chi$ takes value in a number field $L$, then this form is defined over $L$ as a coherent cohomology class. 

We now fix a Schwartz function $\Phi^{(p)}$ away from $p$, and $\chi^{(p)}$ a Dirichlet character of conductor coprime to $p$ such that $\left(\begin{smallmatrix}
    a & 0 \\ 0 & a
\end{smallmatrix}\right)\cdot \Phi^{(p)} = \hat{\chi}^{(p)}(a)^{-1}\Phi^{(p)}$ for $a \in (\hat{\Z}^{(p)})^{\times}$.

Let us now consider a pair $\mu, \nu$ of Dirichlet characters of $p$-power conductor. We will attach to such a pair an element $\Phi_{p,\mu,\nu} \in \Sh{S}(\Q_p^2, \C)$. Let $\psi \colon \A{}{}/\Q \to \C$ be the unique additive character whose restriction to the infinite place is given by $\psi_{\infty}(x_{\infty}) = e^{-2\pi ix_{\infty}}$. Then letting $\psi_p$ be its restriction to $\Q_p$, we have the Fourier transform of any element $\phi \in \Sh{S}(\Q_p,\C)$ given by 
\[
\hat{\phi}(x) = \int_{\Q_p} \phi(y)\psi_p(xy)\mathrm{d}y.
\]
For any character $\xi \colon \Q_p^{\times} \to \C^{\times}$, let $\phi_{\xi}$ be the Schwartz function defined by $\phi_{\xi}(x) = \mathbf{1}_{\Z_p^{\times}}(x)\xi(x)$. We then let 
\[
\Phi_{p,\mu,\nu} := \phi_{\mu}(x)\hat{\phi}_{\nu}(y),
\]
regarded as a function taking values in $\bar{\Q}_p$ under our fixed isomorphism $\bar{\Q}_p \simeq \C$.

Recall $\Lambda = R\llbracket \Z_p^{\times} \rrbracket$, where $R$ is the ring of integers of a finite extension $K$ of $\Q_p$. Let $\Lambda_K = \Lambda[1/p]$, and $\Lambda'_K = \Lambda_K \hat{\otimes} \Lambda_K$. Let $\mathfrak{X}_H^m$ be the p-adic completion of the multiplicative locus of $X_{H,0}(p)$, viewed as a formal scheme over $\Lambda$, and let $\Omega^{\kappa}$ be the sheaf of Katz $p$-adic modular forms of universal weight $\kappa$ over $\mathfrak{X}_H^m$.
\begin{theorem}[cf. Theorem 7.2.1 \cite{grossi2024padicasailfunctionsquadratic}]
    Let $(\kappa_1, \kappa_2)$ be the universal character of $\Lambda'_K$. Then for each $\Phi^{(p)}$ taking values in an algebraic number field $L \subset \C$ which is contained inside $K$ by the fixed isomorphism $\bar{\Q}_p \simeq \C$, there exists a two variable measure
    \[
    \Sh{E}^{\Phi^{(p)}}(\kappa_1,\kappa_2;\chi^{(p)}) \in H^0(\mathfrak{X}_H^m, \Omega^{\kappa}) \underset{{\Lambda, \kappa_1+\kappa_2+1}}{\hat{\otimes}} \Lambda'_K
    \]
    such that its specialisation at $(a+\mu, b+\nu)$ for any $a,b \geq 0$ is the $p$-adic modular form associated to the $p$-depleted algebraic nearly holomorphic modular form of weight $a+b+1$
    \[
    g \mapsto \hat{\nu}(\det g)^{-1}\cdot E^{(a+b+1, \Phi^{(p)}\Phi_{p,\mu,\nu})}\left(g;\chi^{(p)}\mu\nu^{-1},\frac{b-a+1}{2}\right).
    \]
    For any $t \in \Z_{\geq 2}$ even, we can construct a one parameter family of Eisenstein series $\Sh{E}^{\Phi^{(p)}}_t(\kappa;\chi^{(p)}) \in H^0(\mathfrak{X}_{H}^m, {\Omega}_H^{t}) \hat{\otimes} \Lambda_K$ by setting \[\Sh{E}_t^{\Phi^{(p)}}(\kappa;\chi^{(p)}) = \Sh{E}^{\Phi^{(p)}}\left(\frac{t}{2}-\kappa, \frac{t}{2} + \kappa - 1;\chi^{(p)}\right)\]
    such that its specialisation at $s+\nu$ for any $s \in \Z$ with $1-t/2\leq s \leq t/2$ is the $p$-adic modular form associated to the $p$-depleted algebraic nearly holomorphic modular form
    \[
    g \mapsto \hat{\nu}(\det g)^{-1}\cdot E^{(t, \Phi^{(p)}\Phi_{p,\nu^{-1},\nu})}\left(g;\chi^{(p)}\nu^{-2},s\right).
    \]
\end{theorem}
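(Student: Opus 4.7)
The plan is to construct the measure via its $q$-expansions, paralleling Katz's construction of the $p$-adic Eisenstein measure and the proof of \cite[Theorem~7.2.1]{grossi2024padicasailfunctionsquadratic}. First I would write down explicit $q$-expansions at the cusps of $X_{H,0}(p)$ for the adelic Eisenstein series $E^{(k,\Phi^{(p)}\Phi_{p,\mu,\nu})}(g;\chi^{(p)}\mu\nu^{-1},s)$ specialised at $k = a+b+1$, $s = (b-a+1)/2$. After the normalising twist by $\hat{\nu}(\det g)^{-1}$, the Fourier coefficients become partial divisor sums in $\mu$ and $\nu$. The key property of the local Schwartz function $\Phi_{p,\mu,\nu} = \phi_\mu(x)\hat{\phi}_\nu(y)$ is that the associated Eisenstein series is $p$-depleted: only Fourier indices $n$ coprime to $p$ contribute.

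Next I would verify that each Fourier coefficient, viewed as a function of $(\mu,\nu)$, extends to a continuous $\Lambda_K'$-valued function on $\Z_p^\times \times \Z_p^\times$. A non-constant Fourier coefficient has the schematic form
\[
  \sum_{\substack{d \mid n \\ (d,p)=1}} \chi^{(p)}(n/d)\, \kappa_1(d)\, \kappa_2(n/d),
\]
which manifestly interpolates in the universal characters $(\kappa_1, \kappa_2)$, since both $d$ and $n/d$ lie in $\Z_p^\times$ when $(n,p)=1$. The constant terms, which involve Dirichlet $L$-values of $\chi^{(p)}\mu\nu^{-1}$, are interpolated by the Kubota--Leopoldt $p$-adic $L$-function. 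Assembling these measure-valued $q$-expansions via the $q$-expansion principle for Katz $p$-adic modular forms over the multiplicative locus $\mathfrak{X}_H^m$ yields the desired element $\Sh{E}^{\Phi^{(p)}}(\kappa_1, \kappa_2; \chi^{(p)})$. The $\Lambda$-structure through $\kappa_1 + \kappa_2 + 1$ reflects that the specialisation at $(a+\mu, b+\nu)$ has weight $a+b+1$.

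The passage from the nearly holomorphic Eisenstein series to its $p$-adic avatar is automatic on $\mathfrak{X}_H^m$: the unit-root splitting of the Hodge filtration canonically retracts a nearly holomorphic modular form onto its holomorphic projection, and on $q$-expansions this amounts to forgetting the nearly holomorphic variable; since the $p$-depletion kills the \emph{a priori} obstruction this would otherwise introduce at $p$, no information is lost. For the one-parameter family $\Sh{E}_t^{\Phi^{(p)}}(\kappa;\chi^{(p)})$, I would pull back the two-variable measure along the ring map $\Lambda_K' \to \Lambda_K$ induced by $(\kappa_1, \kappa_2) \mapsto (t/2 - \kappa,\, t/2 + \kappa - 1)$. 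This satisfies $\kappa_1 + \kappa_2 + 1 = t$, so the resulting family has constant weight $t$. A direct substitution $\kappa \mapsto s + \nu$ corresponds to $(a, \mu, b, \nu) = (t/2-s,\, \nu^{-1},\, t/2+s-1,\, \nu)$ in the two-variable specialisation, giving $\chi^{(p)}\mu\nu^{-1} = \chi^{(p)}\nu^{-2}$, $\Phi_{p,\mu,\nu} = \Phi_{p,\nu^{-1},\nu}$, and $(b-a+1)/2 = s$, which matches the claimed formula.

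The main obstacle will be the bookkeeping of explicit constants and local factors at $p$: the $2^{1-k}$ normalisation in $\Phi_\infty^{(k)}$, the $|\det g|_{\A{}{}}^s$ factor in the Siegel section, the interaction of the additive Fourier transform with the characters $\phi_\mu$, and the verification that the interpolated $q$-expansions match those of the twisted classical Eisenstein series. Since the one-variable case is worked out in detail in \cite{LPSZ} and \cite{grossi2024padicasailfunctionsquadratic}, the two-variable extension should be essentially a parameter count once these normalisations are pinned down.
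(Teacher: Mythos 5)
Your proposal is correct and follows essentially the same route as the argument this paper relies on: the theorem is quoted with no proof of its own, deferring to \cite{grossi2024padicasailfunctionsquadratic} (Theorem 7.2.1) and \cite{LPSZ}, where the measure is constructed exactly as you describe, by interpolating the $q$-expansion coefficients $\sum_{uv>0}\mathrm{sgn}(u)\,u^{\kappa_1}v^{\kappa_2}\hat{\Phi}^{(p)}_{\mathrm{f}}(u,v)q^{uv}$ over the multiplicative locus via the $q$-expansion principle, with the one-parameter family obtained by the substitution $(\kappa_1,\kappa_2)=(\tfrac{t}{2}-\kappa,\tfrac{t}{2}+\kappa-1)$, whose specialisation bookkeeping you verify correctly. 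The only cosmetic slip is the appeal to Kubota--Leopoldt for the constant terms: with $\Phi_{p,\mu,\nu}=\phi_{\mu}(x)\hat{\phi}_{\nu}(y)$ the family is $p$-depleted, so the constant terms vanish and no $p$-adic Dirichlet $L$-function is needed.
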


\begin{rem}\label{R405}
    Some remarks are in order.
    \begin{enumerate}
        \item The specialisation of $\Sh{E}^{\Phi^{(p)}}(\kappa_1,\kappa_2;\chi^{(p)})$ to locally algebraic weights of the form $(a+\mu,0)$ or $(0,b+\nu)$ is not only nearly holomorphic, but is honestly holomorphic. In particular restriction of $\Sh{E}^{\Phi^{(p)}}(\kappa_1,\kappa_2;\chi^{(p)})$ to either the line $\kappa_1 = 0$ or $\kappa_2 = 0$ is an overconvergent family of modular forms of weight $\kappa_2+1$ (resp. $\kappa_1+1$) \cite[Lemma 3.2]{LoefflerRankin}.
        \item Let $\Sh{E}^{\Phi^{(p)}}(0,\kappa_2-\kappa_1;\chi^{(p)}) \in H^0(\mathfrak{X}_H^m, \Omega^{\kappa}) \underset{{\Lambda, \kappa_2-\kappa_1+1}}{\hat{\otimes}} \Lambda'_K$ be the overconvergent specialisation as above given by $\Sp \Lambda'_K \to \Sp \Lambda'_K$, $(\kappa_1,\kappa_2) \mapsto (0,\kappa_2-\kappa_1)$. The $q$-expansion of $\Sh{E}^{\Phi^{(p)}}(\kappa_1,\kappa_2;\chi^{(p)})$ at $\infty$ is given by (cf. \cite[Theorem 7.6]{LPSZ}) 
        \[
        \sum_{\substack{u,v \in (\Z_{(p)}^{\times})^2\\ uv>0}} \mathrm{sgn}(u)u^{\kappa_1}v^{\kappa_2}(\hat{\Phi}_{\mathrm{f}}^{(p)})(u,v)q^{uv}
        \]
        where $\hat{\Phi}_{\mathrm{f}}^{(p)}(u,v) := \int_{\A{\mathrm{f}}{(p)}}\Phi_{\mathrm{f}}^{(p)}(u,w)\psi(uw)\mathrm{d}w$. In particular, letting $\theta = q\frac{\mathrm{d}}{\mathrm{d}q}$, we have 
        \[
        \Sh{E}^{\Phi^{(p)}}(\kappa_1,\kappa_2;\chi^{(p)}) = \theta^{\kappa_1}\Sh{E}^{\Phi^{(p)}}(0,\kappa_2-\kappa_1;\chi^{(p)}).
        \]
    \end{enumerate}
\end{rem}

We now explain how to lift $\Sh{E}^{\Phi^{(p)}}(\kappa_1,\kappa_2;\chi^{(p)})$ to a nearly overconvergent family using the results of \cite{andreatta2021triple}. For this, let us fix an open affinoid $\Sh{U}$ inside the weight space. We will later be defining our $p$-adic $L$-function by pairing the pushforward of a nearly overconvergent family of Eisenstein series against a Coleman family $\underline{\Pi}$ of the form afforded by Proposition \ref{P3018}, hence the reader can think of $\Sh{U}$ as in that proposition. 

We recall that the $\theta$ operator can be interpreted in terms of the Gauss–Manin connection as follows.
\[
\theta \colon \underline{\omega}^k \to \Sym^k\Sh{H}_{\Sh{E}} \xrightarrow{\nabla} \Sym^k\Sh{H}_{\Sh{E}} \otimes \Omega^1_{\mathfrak{X}_H^{\mathrm{ord}}}(\log D_H) \simeq \Sym^{k+2}\Sh{H}_{\Sh{E}} \xrightarrow{\mathrm{{Spl}_{ur}}} \underline{\omega}^{k+2}.
\]
Here $\mathrm{{Spl}_{ur}}$ is the unit-root splitting, which only exists over the ordinary locus.

\begin{theorem}\label{T406}
Let $\Sh{V}$ be an open affinoid in the weight space.
    \begin{enumerate}
        \item For large enough $a$, and letting $(\kappa_1,\kappa_2)$ be the universal character of $\Sh{U\times V}$, we have \[\Sh{E}^{\Phi^{(p)}}(0,\kappa_2-\kappa_1;\chi^{(p)}) \in H^0(\Sh{X}_{H,0}(p)_{[a,1]}, \Omega^{\kappa}) \underset{\Sh{O}(\Sh{W}_r),\kappa_2-\kappa_1+1}{\hat{\otimes}}\Sh{O}(\Sh{U} \times \Sh{V}).\]
        \item There exists a nearly overconvergent family 
        \[
        \nabla^{\kappa_1}\Sh{E}^{\Phi^{(p)}}(0,\kappa_2-\kappa_1;\chi^{(p)}) \in H^0(\Sh{X}_{H,0}(p)_{[a',1]}, \Sh{N}^{\kappa})\underset{\Sh{O}(\Sh{W}_r),\kappa_1+\kappa_2+1}{\hat{\otimes}} \Sh{O}(\Sh{U\times V})
        \]
        for some $a' \geq a$, whose image under $\mathrm{Spl_{ur}}$ after restricting to the multiplicative locus $\mathfrak{X}_H^m$ is given by $\Sh{E}^{\Phi^{(p)}}(\kappa_1,\kappa_2;\chi^{(p)})$. Moreover, this is unique up to shrinking the radius of overconvergence.
    \end{enumerate}
\end{theorem}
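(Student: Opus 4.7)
My approach splits into two independent stages. For (1) I will transfer the one-variable overconvergence result of \cite{LoefflerRankin}. For (2) I will construct the nearly overconvergent lift as a $p$-adically analytic iterate of Gauss--Manin on the modular curve, and verify its interpolation property via the unit-root splitting on the ordinary locus.

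For (1), I observe that the family $\Sh{E}^{\Phi^{(p)}}(0, \kappa_2 - \kappa_1; \chi^{(p)})$ depends only on the one-variable character $\kappa_2 - \kappa_1$, and by Remark \ref{R405}(1) its specialisations at integer weights $\kappa_2 - \kappa_1 = b \in \Z_{\geq 0}$ are classical $p$-stabilised holomorphic Eisenstein series attained at the endpoint $s = (b+1)/2$ of the critical strip. That these assemble into an overconvergent one-variable family is exactly \cite[Lemma 3.2]{LoefflerRankin}. Pulling back along $(\kappa_1, \kappa_2) \mapsto \kappa_2 - \kappa_1$, and choosing $a$ small enough that $\Omega_H^\kappa$ is defined on $\Sh{X}_{H,0}(p)_{[a,1]}$ with the required radius of analyticity, then gives (1).

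For (2), set $\mathcal{F} := \Sh{E}^{\Phi^{(p)}}(0, \kappa_2 - \kappa_1; \chi^{(p)})$ and view it inside $\Fil_0 \Sh{N}_H^{\kappa_2 - \kappa_1 + 1} = \Omega_H^{\kappa_2 - \kappa_1 + 1}$. Gauss--Manin composed with the Kodaira--Spencer isomorphism $\Omega^1_{\Sh{X}_{H,0}(p)}(\log D_H) \simeq \underline{\omega}_H^2$ produces a Griffiths-transverse map $\nabla_H : \Sh{N}_H^k \to \Sh{N}_H^{k+2}$ acting on $\Gr_m$ by multiplication by $k - m$, the modular-curve analogue of Proposition \ref{P1029}. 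Iterating $n$ times carries $\Omega_H^{\kappa_2-\kappa_1+1}$ into $\Fil_n \Sh{N}_H^{\kappa_2-\kappa_1+1+2n}$. To interpolate the exponent $n$ to the universal character $\kappa_1 \in \Sh{O}(\Sh{U})$, I will use the refined multi-level integral structure on $\Sh{H}_{\Sh{E}}^{\sharp}$ described in Remark \ref{R2010}: higher iterates of the Verschiebung give quantitative bounds showing that $\nabla_H$ contracts the $p$-adic norm on deep filtration pieces by a factor comparable to $p/\underline{\Ha}^{p+1}$. Combined with the fact that $\mathcal{F}$ has $p$-depleted $q$-expansions (enforced by the Schwartz support at $p$), this forces the formal expression for $\nabla_H^{\kappa_1}\mathcal{F}$ to converge in $H^0(\Sh{X}_{H,0}(p)_{[a',1]}, \Sh{N}_H^{\kappa_1+\kappa_2+1}) \hat{\otimes} \Sh{O}(\Sh{U}\times \Sh{V})$ for some $a' \geq a$. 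This $p$-adic convergence is the main technical obstacle and is the modular-curve instance of \cite{andreatta2021triple}, adapted in \cite[\S5]{kazi2024twistedtripleproductpadic}.

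Finally, the identification $\mathrm{Spl}_{\mathrm{ur}}(\nabla_H^{\kappa_1} \mathcal{F}) = \Sh{E}^{\Phi^{(p)}}(\kappa_1, \kappa_2; \chi^{(p)})$ on the multiplicative locus $\mathfrak{X}_H^m$ reduces, at the level of $q$-expansions, to the standard compatibility of $\mathrm{Spl}_{\mathrm{ur}} \circ \nabla_H$ with $\theta = q \tfrac{\mathrm{d}}{\mathrm{d}q}$ together with the identity of Remark \ref{R405}(2). Uniqueness up to shrinking the overconvergence radius follows from the density of integer weights in $\Sh{U}$, since the integer specialisations $\nabla_H^n\mathcal{F}|_{\kappa_1 = n}$ are classically determined, combined with the $q$-expansion principle for nearly overconvergent families in \cite{andreatta2021triple}.
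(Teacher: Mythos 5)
Your proposal is correct and follows essentially the same route as the paper: part (1) is Remark \ref{R405}(1) (i.e.\ \cite[Lemma 3.2]{LoefflerRankin}), part (2) is the $p$-adic iteration of the Gauss--Manin connection applied to the $p$-depleted overconvergent family as in \cite[Theorem 4.3]{andreatta2021triple} (adapted in \cite{kazi2024twistedtripleproductpadic}), with the interpolation checked on $q$-expansions via $\mathrm{Spl}_{\mathrm{ur}}\circ\nabla = \theta$ and Remark \ref{R405}(2), and uniqueness rests on the injectivity of $\mathrm{Spl}_{\mathrm{ur}}$ on nearly overconvergent forms, which is also what your density-of-integer-weights argument implicitly uses. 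The only slip is directional: in the degree parametrisation the multiplicative locus sits at degree $1$, so in part (1) one needs $a$ \emph{large} (equivalently the overconvergence radius $1-a$ small), not ``$a$ small enough''.
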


\begin{proof}
    Part $(1)$ is restating Remark \ref{R405} (1). Part (2) is the main result of \cite[Theorem 4.3]{andreatta2021triple}. We note that the $p$-adic Eisenstein series $\Sh{E}^{\Phi^{(p)}}(0,\kappa_2-\kappa_1;\chi^{(p)})$ is $p$-depleted, and thus lies in the kernel of $U_p$. Therefore we can use the results of loc. cit. The uniqueness is a corollary of the injectivity of $\mathrm{Spl_{ur}}$.
\end{proof}

\begin{rem}
    Strictly speaking Theorem 4.3 of \cite{andreatta2021triple} only holds for $\kappa_1,\kappa_2$ satisfying certain strong analyticity conditions (cf. Assumption 4.1 in loc. cit.). One can ensure these conditions by shrinking $\Sh{U}$ and $\Sh{V}$ if necessary. Or else, one can construct certain bigger sheaves $\Sh{N}^{\kappa,\mathrm{new}} \supset \Sh{N}^{\kappa}$ following the discussion in Remark \ref{R2010}. As \cite{graham2023padic} shows, using these bigger sheaves allows one to relax the analyticity conditions on $\kappa_2-\kappa_1$ and $\kappa_1$; we leave the details to the interested reader.
\end{rem}

\section{\texorpdfstring{$p$}{p}-adic Asai \texorpdfstring{$L$}{L}-function}\label{S5}

 \subsection{Whittaker models and periods}
 
  Let $\Pi$ be a cuspidal automorphic representation of weight $(k_1,k_2)$ defined over a number field $L$ which we assume is contained in $K$ as before. We assume $k_1>k_2\geq 1$, $k_1 = k_2 \, \mathrm{mod}\, 2$, and $\Pi$ is of level $\mathfrak{N}$ prime to $p$.

  We recall the following facts from \cite{grossi2024padicasailfunctionsquadratic}.

  \begin{enumerate}
    \item The $K_1(\mathfrak{N})$-invariants of the Whittaker model (with respect to the additive character $\psi_F(y) = \psi(\Tr_{F/\Q}(y/\sqrt{D}))$) of $\Pi_{\mathrm{f}}$, denoted $\Sh{W}(\Pi_{\mathrm{f}})$, is $1$-dimensional and has a unique basis $W_{\mathrm{f}}^{\text{new}}$ with $W_{\mathrm{f}}^{\text{new}}(1) = 1$.
    \item Letting $X_G$ be the compactified Shimura variety as in $\S\ref{S2}$ for level $K_1(\mathfrak{N})$, the $L$-vector space
    \[
    H^1\left(X_{G,L}, \underline{\omega}_G^{(2-k_1,k_2;w)}\right)[\Pi_{\mathrm{f}}]
    \]
    is $1$-dimensional, and a choice of a basis $\nu_{\Pi}$ determines an isomorphism  of $\GL_2(\A{F,\mathrm{f}}{})$-representations
    \[
    \Sh{W}(\Pi_{\mathrm{f}})^{K_1(\mathfrak{N})}\otimes |\det(\cdot)|^{-w/2} \xrightarrow{\sim} H^1\left(X_{G,\C}, \underline{\omega}_G^{(2-k_1,k_2;w)}\right)[\Pi_{\mathrm{f}}]
    \]
    given by sending $W^{\text{new}}_{\mathrm{f}} \mapsto \nu_{\Pi}$.
    \item Define the normalised Whittaker function at $\infty$ as the function on $\GL_2(F \otimes \R)$ which is supported on the identity component and satisfies
    \[
    W_{\infty}^{\mathrm{ah},1}\left(\begin{pmatrix}
        y & x \\ 0 & 1
    \end{pmatrix}\begin{pmatrix}
        t\cos{\theta} & t\sin{\theta} \\
        -t\sin{\theta} & t\cos{\theta}
    \end{pmatrix} \right) = \mathrm{sgn}(t)^{\underline{k}}\cdot y^{\underline{k}/2}\cdot e^{i(-k_1\theta_1 + k_2\theta_2)}\cdot e^{2\pi i((-x_1+iy_1)+(x_2+iy_2))/\sqrt{D}}
    \]
    for $x,y,t,\theta \in F \otimes \R$, $y \gg 0$, and $D$ the discriminant of $F/\Q$. This is a basis of the minimal $K$-type subspace in the Whittaker model of the $\sigma_1$-antiholomorphic, $\sigma_2$-holomorphic part of $\Pi_{\infty}$. 
    \item Let $\phi_{\text{new}}^{\mathrm{ah},1}$ be the unique function in $\Pi$ whose Whittaker function factors as $W^{\mathrm{ah},1}_{\infty}W^{\text{new}}_{\mathrm{f}}$. Define the period $\Omega_{\infty}(\Pi) \in \C^{\times}$ to be the scalar such that $\phi^{\mathrm{ah},1}_{\text{new}} = \Omega(\Pi)\nu_{\Pi}$.
  \end{enumerate}

  We choose a $\mathfrak{p}_1$-stabilisation $\nu_{\Pi,\alpha_1}$ of $\nu_{\Pi}$ in $H^1(X_{G,0}(\mathfrak{p}_1)_{K}, \underline{\omega}_G^{(2-k_1,k_2;w)})$ with $U_{\mathfrak{p}_1}^t$ eigenvalue $\alpha_1$ having slope $\leq h$. By Proposition \ref{P3018}, there exists an open affinoid neighbourhood $\Sh{U}$ of $0$ in the weight space, such that the Hecke eigensystem corresponding to the $\mathfrak{p}_1$-stabilisation of $\Pi^{\vee}$ with $U_{\mathfrak{p}_1}$ eigenvalue $\alpha_1$ can be interpolated over $\Sh{U}$ in a Coleman family of slope $\leq h$, which we denote by $\underline{\Pi}^{\vee}$. Moreover, the specialisation of $\underline{\Pi}^{\vee}$ at an integer weight $\kappa_{\Sh{U}} = a$, denoted $\Pi[a]^{\vee}$ is the Hecke eigensystem corresponding to a $\mathfrak{p}_1$-stabilisation of an automorphic representation if $a$ is large enough (e.g. $k_1+2a - 1 > 2h$ to avoid the $\GL_2(F_{\mathfrak{p}_1})$-representation being twists of Steinberg). Henceforth, by shrinking $\Sh{U}$ if necessary, we assume that all specialisations with $a\geq 0$ are non-Steinberg.

Fix an arbitrary isomorphism 
\[
\nu_{\underline{\Pi}} \colon H^1(2\kappa_{\Sh{U}}+k_1, 2-k_2;w)^{\leq h}[\underline{\Pi}^{\vee}] \xrightarrow{\sim} \Sh{O}(\Sh{U}).
\]
The specialisation of $\nu_{\underline{\Pi}}$ at any non-negative integer weight $a$ is a basis of the $\Pi[a]$-eigenspace in $H^1(X_{G,0}(\mathfrak{p}_1)_K, \underline{\omega}_G^{(2-k_1-2a,k_2;w)})$ which is a priori defined over $K$. Choosing an algebraic basis $\nu_{\Pi[a]}$ of the non-$\mathfrak{p}_1$-stabilised eigenspace over the coefficient field $L_{a}$ of $\Pi[a]$, we get a pair of periods as follows.
\[
\Omega_{\infty}(\Pi[a]) \in \C^{\times}, \quad \phi^{\mathrm{ah},1}_{\text{new},\Pi[a]} = \Omega_{\infty}(\Pi[a])\cdot \nu_{\Pi[a]}
\]
\[
\Omega_p(\Pi[a]) \in K^{\times},\quad (\nu_{\underline{\Pi}})_{|a} = \Omega_p(\Pi[a])\cdot \nu_{\Pi[a],\mathfrak{p}_1}
\]
Here $\nu_{\Pi[a],\mathfrak{p}_1}$ is the appropriate $\mathfrak{p}_1$-stabilisation of $\nu_{\Pi[a]}$. We note that $\Omega_p(\Pi[a])/\Omega_{\infty}(\Pi[a]) \in \C\otimes_{L_{a}} K$ is independent of the choice of $\nu_{\Pi[a]}$ for all $a\geq 0$.

  \subsection{The $p$-adic $L$-function} 
  
   We now choose a Schwartz function $\Phi^{(p)} = \prod_{\ell \neq p} \Phi_{\ell}$, where $\Phi_{\ell} = \mathbf{1}_{\Z_{\ell}^2}$ if $\ell \nmid N = \mathfrak{N}\cap \Z$, and if $v_{\ell}(N) = r > 0$, we define
\begin{equation}\label{E05}
\Phi_{\ell} = (\ell^2-1)\ell^{2r-2}\cdot \mathbf{1}_{\ell^r\Z_{\ell}, 1+\ell^r\Z_{\ell}}.
\end{equation}
By Theorem \ref{T406}, we can then view $\Sh{E}^{\Phi^{(p)}}(\frac{k_1-k_2}{2}+\kappa_{\Sh{U}}-\sigma,\kappa_{\Sh{U}}+\sigma-1;{\chi^{(p)}})$ as a nearly overconvergent family over $\Sh{U}\times \Sh{V}$, where we now use the symbols $\kappa_{\Sh{U}}$ and $\sigma$ for the universal weights over $\Sh{U}$ and $\Sh{V}$ respectively.

\begin{defn}
    Let $e_{\underline{\Pi}^{\vee}} \colon H^1(2\kappa_{\Sh{U}}+k_1, 2-k_2;w)^{\leq h} \to H^1(2\kappa_{\Sh{U}}+k_1, 2-k_2;w)^{\leq h}[\underline{\Pi}^{\vee}]$ be the projection. Let $e^{\dagger, \leq h} = e^{\dagger}e^{\leq h}$. Define the meromorphic function $\Sh{L}^{\text{imp}}_{p,\text{As}}(\underline{\Pi})(\kappa_{\Sh{U}},\sigma)$ on $\Sh{U}\times \Sh{V}$ to be
    \[
    \Sh{L}^{\text{imp}}_{p,\text{As}}(\underline{\Pi})(\kappa_{\Sh{U}},\sigma) = (\star)\cdot \nu_{\underline{\Pi}}\left(e_{\underline{\Pi}^{\vee}}e^{\dagger,\leq h}\iota_*\Sh{E}^{\Phi^{(p)}}\left(\tfrac{k_1-k_2}{2}+\kappa_{\Sh{U}}-\sigma,\kappa_{\Sh{U}}+\sigma+\tfrac{k_1-k_2}{2}-1;\chi^{(p)}\right)\right)
    \]
    for $(\star) = \frac{p+1}{p}\cdot (\sqrt{D})^{1-(k_1+k_2+2\kappa_{\Sh{U}})/2-\sigma}(-1)^{\sigma}$, and $\chi^{(p)}$ the restriction to $\Q$ of the common central character of ${\Pi}[a]$.
\end{defn}

\begin{rem} \ % hack for better formatting of lists inside theorem environments -- DL 
    \begin{enumerate}
        \item The function is meromorphic in $\kappa_{\Sh{U}}$ because we used the overconvergent projector $e^{\dagger}$, and for a fixed $\kappa_{\Sh{U}} = a$, it is holomorphic in $\sigma$.
        \item The $(\star)$ factor is included to cancel out various unwanted terms arising from local zeta integrals computing imprimitive Asai $L$-values (see the discussion after Definition 7.3.2 of \cite{grossi2024padicasailfunctionsquadratic}).
        \item We have defined this function on $\Sh{U} \times \Sh{V}$ where $\Sh{V} \subset \Sh{W}$ is an arbitrary affinoid. However, if $\Sh{V} \subseteq \Sh{V}'$ are two affinoids, then it follows from the interpolating property (see below) that the resulting functions are clearly compatible under restriction from $\Sh{U} \times \Sh{V}'$ to $\Sh{U} \times \Sh{V}$, so they patch together to give a function on $\Sh{U} \times \Sh{W}$.
    \end{enumerate}
\end{rem}

The function $\Sh{L}^{\text{imp}}_{p,\text{As}}(\underline{\Pi})(\kappa_1,\kappa_2)$ is designed to interpolate imprimitive Asai $L$-values, whose definition and relation to the primitive Asai $L$-function we now recall.

\subsection{Imprimitive Asai \texorpdfstring{$L$}{L}-function and interpolation formula}
For $\Pi$ as above with central character $\chi$ and level $K_1(\mathfrak{N})$, fix the following data. 
\begin{itemize}
    \item For each prime $\ell$, let $W_{\ell}^{\mathrm{new}}$ be the normalised new vector in the Whittaker model $\Sh{W}(\Pi_{\ell}, \psi_{F,\ell})$;
    \item $W_p = W_{\mathfrak{p}_1,\alpha_1}W_{\mathfrak{p}_2}^{\mathrm{new}}$, where $W_{\mathfrak{p}_1, \alpha_1}$ is the $\mathfrak{p}_1$-stabilisation of $W_{\mathfrak{p}_1}^{\mathrm{new}}$ with $U_{\mathfrak{p}_1}^t$-eigenvalue $\alpha_1$;
    \item $\Phi^{(p)} = \prod_{\ell \neq p}\Phi_{\ell}$ is as in (\ref{E05});
    \item $\Phi_p = \Phi_{p,1,1}$ in the notation of \S\ref{S05}.
\end{itemize}

Let $\ell |N = \mathfrak{N}\cap \Z$ be a prime. Then letting 
\[
Z(W_{\ell}^{\mathrm{new}}, \Phi_{\ell};s) := \int_{Z_{H}N_H\backslash H(\Q_{\ell})} W_{\ell}^{\mathrm{new}}(h)f^{\Phi_{\ell}}(h;\hat{\chi}_{\ell},s)\mathrm{d}h
\]
be the local zeta integral at $\ell$, \cite[Proposition 6.4.2]{grossi2024padicasailfunctionsquadratic} shows that there exists a polynomial $C_{\ell}(W_{\ell}^{\mathrm{new}}, \Phi_{\ell};X) \in \C[X,X^{-1}]$ generating the unit ideal, such that
\[
Z(W_{\ell}^{\mathrm{new}}, \Phi_{\ell};s) = L_{\mathrm{As}}(\Pi_{\ell}, s)\cdot C_{\ell}(W_{\ell}^{\mathrm{new}}, \Phi_{\ell};\ell^{-s}).
\]

\begin{defn}
    We define the imprimitive Asai $L$-function to be $L^{\mathrm{imp}}_{\mathrm{As}}(\Pi,s) = \prod_{\ell} L^{\mathrm{imp}}_{\mathrm{As}}(\Pi_{\ell},s)$ where the local $L$-factors are defined as 
    \begin{align*}
     L^{\mathrm{imp}}_{\mathrm{As}}(\Pi_{\ell},s) = 
        \begin{cases}
        L_{\mathrm{As}}(\Pi_{\ell}, s)\cdot C_{\ell}(W_{\ell}^{\mathrm{new}}, \Phi_{\ell};\ell^{-s}) \qquad \qquad &\text{if }\ell|N \\
        L_{\mathrm{As}}(\Pi_{\ell}, s) &\text{otherwise}.
        \end{cases}
    \end{align*}
\end{defn}

Let $\alpha_i^\circ, \beta_i^{\circ}$ be the Satake parameters of $\Pi_{\mathfrak{p}_i}$ (unitarily normalised, so that $|\alpha_i^{\circ}| = |\beta_i^{\circ}| = 1)$. We label the parameters such that $\alpha_1 = p^{(k_1-1)/2}\alpha_1^{\circ}$ is the $U_{\mathfrak{p}_1}^t$-eigenvalue of $W_{\mathfrak{p}_1,\alpha_1}$, so that letting $\beta_1 = p^{(k_1-1)/2}\beta_1^{\circ}$, we have 
\[
W_{\mathfrak{p}_1,\alpha_1} = \left(1 - \frac{\beta_1}{U_{\mathfrak{p}_1}^t}\right)W_{\mathfrak{p}_1}^{\mathrm{new}}.
\]

\begin{defn}
    For $s \in \Z$, define 
    \begin{align*}
        \Sh{E}_p(\mathrm{As}(\Pi), s) = 
            \left(1- \frac{p^{s-1}}{\alpha_1^{\circ}\alpha_2^{\circ}}\right)\left( 1 - \frac{p^{s-1}}{\alpha_1^{\circ}\beta_2^{\circ}}\right)\left(1 - \frac{\beta_1^{\circ}\alpha_2^{\circ}}{p^s}\right)\left(1 - \frac{\beta_1^{\circ}\beta_2^{\circ}}{p^s}\right) 
    \end{align*}
\end{defn}

\begin{prop}[{cf. \cite[Proposition 8.4.1]{grossi2024padicasailfunctionsquadratic}}]
    With the above choices, we have
    \[
    Z(W_p,\Phi_p;s) = \frac{p}{p+1}\Sh{E}_p(\mathrm{As}(\Pi),s)L_{\mathrm{As}}(\Pi_p, s).
    \]
\end{prop}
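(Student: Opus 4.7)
The statement is purely local at $p$, so the plan is to reduce to an explicit Whittaker integral computation. Since $p=\pp_1\pp_2$ is split in $F$, the local group factors as $G(\Q_p)=\GL_2(\Q_p)_{\pp_1}\times\GL_2(\Q_p)_{\pp_2}$ with $H(\Q_p)=\GL_2(\Q_p)$ sitting diagonally via $\iota$, and the Whittaker function factorises as $W_p=W_{\pp_1,\alpha_1}\cdot W_{\pp_2}^{\mathrm{new}}$ (restricted to the diagonal). The Schwartz function $\Phi_p=\Phi_{p,1,1}$ is $\phi_1(x)\hat\phi_1(y)$ with $\phi_1=\mathbf{1}_{\Z_p^\times}$, so $\hat\phi_1$ is (up to sign) the difference of characteristic functions of $\Z_p$ and $p^{-1}\Z_p$, encoding the non-sphericality that will produce the factor $p/(p+1)$.

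The first step is to unfold the zeta integral over $Z_HN_H\backslash H(\Q_p)$ via the Iwasawa decomposition $h=\left(\begin{smallmatrix}y&0\\0&1\end{smallmatrix}\right)k$ with $k\in\GL_2(\Z_p)$. Since both $W_p$ (by its invariance under $\operatorname{Iw}_{\pp_1}\cdot\GL_2(\Sh{O}_{F,\pp_2})$) and $\Phi_p$ are controlled under $\GL_2(\Z_p)$, the $k$-integral reduces to a sum over the finite coset space $B(\F_p)\backslash\GL_2(\F_p)$, which has cardinality $p+1$, and only the identity coset and possibly one more contribute; the interplay with $\hat\phi_1$ is what produces the $\tfrac{p}{p+1}$ normalisation.

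The second step is to substitute the standard formula for the new Whittaker function on a diagonal torus element in terms of the Satake parameters and to express $W_{\pp_1,\alpha_1}=(1-\beta_1/U^t_{\pp_1})W^{\mathrm{new}}_{\pp_1}$ in the same form (with the $\beta_1$-term removed). The remaining $y$-integral is a geometric sum in $p^{-s}$ whose closed form is the local Asai $L$-factor $L_{\mathrm{As}}(\Pi_p,s)$ times the four Euler-like terms appearing in $\Sh{E}_p(\mathrm{As}(\Pi),s)$: the removal of $\beta_1$ suppresses the $\beta_1^{\circ}\alpha_2^{\circ}$ and $\beta_1^{\circ}\beta_2^{\circ}$ factors, while the Fourier transform built into $\hat\phi_1$ (equivalently, the modification from $\mathbf{1}_{\Z_p^2}$) accounts for the $\alpha_1^{\circ}\alpha_2^{\circ}$ and $\alpha_1^{\circ}\beta_2^{\circ}$ factors.

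The main obstacle is bookkeeping of the various normalisations, especially the precise shape of $\Sh{E}_p$ and the constant $\tfrac{p}{p+1}$, rather than any conceptual difficulty: the calculation in the ordinary case is carried out in \cite[Prop.~8.4.1]{grossi2024padicasailfunctionsquadratic}, and since everything in sight is a formal rational function of $\alpha_1^{\circ},\beta_1^{\circ},\alpha_2^{\circ},\beta_2^{\circ}$ and $p^{-s}$, the same manipulation goes through verbatim in the finite-slope case -- the $p$-adic valuation of $\alpha_1$ never enters the local computation. Thus the proof reduces to transcribing the computation of loc.\ cit., with the observation that the $\pp_1$-stabilisation is defined identically in the two settings.
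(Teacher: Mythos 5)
Your proposal is correct and matches the paper's approach: the paper gives no independent argument for this proposition, relying (via the ``cf.'' citation) on the local zeta computation of \cite[Proposition 8.4.1]{grossi2024padicasailfunctionsquadratic}, which is exactly your reduction, together with your (correct) key observation that the computation is a purely formal identity in $\alpha_1^{\circ},\beta_1^{\circ},\alpha_2^{\circ},\beta_2^{\circ},p^{-s}$ into which the slope of $\alpha_1$ never enters, so ordinarity is irrelevant. Your sketch of the unfolding is sound in spirit (minor imprecision: $\hat\phi_1 = \mathbf{1}_{\Z_p} - \tfrac1p\mathbf{1}_{p^{-1}\Z_p}$, not quite a signed difference of characteristic functions), but the substance of the proof is the transcription of loc.\ cit., as in the paper.
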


\begin{theorem}\label{thm:main}
    Let $\underline{\Pi}^{\vee}$ be a Coleman family as above. Then for any $a \geq 0$, and $s \in \Z$ satisfying $1-\frac{(k_1+2a-k_2)}{2} \leq s \leq \frac{k_1+2a-k_2}{2}$, denoting the unitarily normalised $U_{\mathfrak{p}_1}^t$-eigenvalue of $\Pi[a]$ by $\alpha_1^{\circ}$, we have
    \begin{equation*}
        \Sh{L}^{\mathrm{imp}}_{p,\mathrm{As}}(\underline{\Pi})(a,s) = \Sh{E}_p(\mathrm{As}(\Pi[a]), s) \cdot \frac{\Gamma(s+a-1+\tfrac{k_1+k_2}{2})\Gamma(s+a+\tfrac{k_1-k_2}{2})}{2^{(k_1+2a-2)}i^{(1-k_2)}(-2\pi i)^{(2s+2a+k_1-1)}}\cdot L^{\mathrm{imp}}_{\mathrm{As}}(\Pi[a],s)\cdot \frac{\Omega_p(\Pi[a])}{\Omega_{\infty}(\Pi[a])}.
    \end{equation*}
\end{theorem}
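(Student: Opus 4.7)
The plan is to evaluate $\Sh{L}^{\mathrm{imp}}_{p,\mathrm{As}}(\underline{\Pi})(a, s)$ at a classical point in the interpolation range and reduce it to the Rankin--Selberg integral representation of the Asai $L$-function, following \cite{grossi2024padicasailfunctionsquadratic} but with the Coleman-theoretic projectors $e^{\leq h}$ and $e^{\dagger}$ replacing the higher Hida projector there, and with the nearly-overconvergent Eisenstein lift of Theorem \ref{T406} accommodating the two-variable input. First I would show that $\nu_{\underline{\Pi}}$, the Hecke projector $e_{\underline{\Pi}^{\vee}} e^{\leq h}$, and the pushforward $\iota_{*}$ all commute with specialization at $\kappa_{\Sh{U}} = a$: for $\nu_{\underline{\Pi}}$ this is the defining property of $\Omega_p(\Pi[a])$, and for the Hecke projector it follows from the $\mathrm{Tor}_{1}$-vanishing argument in the proof of Proposition \ref{P3018}. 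The specialization of $\Sh{E}^{\Phi^{(p)}}(\tfrac{k_1-k_2}{2}+a-s,\, a+s+\tfrac{k_1-k_2}{2}-1;\, \chi^{(p)})$ is then, by the uniqueness statement in Theorem \ref{T406}(2), the classical $p$-depleted nearly holomorphic Eisenstein series $E^{(t,\, \Phi^{(p)}\Phi_{p,1,1})}(-; \chi^{(p)}, s)$ of weight $t := k_1 + 2a - k_2$; the hypothesis $1 - t/2 \le s \le t/2$ is exactly the range in which this series is classical and defined over a number field as a coherent cohomology class.

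Second, I would identify $e^{\dagger}$ at the specialization with the classical projection from the nearly-holomorphic part to the holomorphic part on the $\Pi[a]$-isotypic piece. Lemma \ref{L1030} and Proposition \ref{P1029}(2) realize $e^{\dagger}$ as the unique section of the natural inclusion $H^1(\Omega^{(k_1+2a,\, 2-k_2;\, w)}) \hookrightarrow H^1(\Fil_{n+1}\Sh{N}^{(k_1+2a,\, 2-k_2;\, w)})/\nabla_1(\cdots)$, and the hypothesis on $s$ ensures the denominators $\prod_i(k_1+2a-2-i)$ do not vanish at the specialization point, so $e^{\dagger}$ is regular there. Combined with the small-slope classicality comparison immediately preceding Proposition \ref{P3018}, this reduces $\Sh{L}^{\mathrm{imp}}_{p,\mathrm{As}}(\underline{\Pi})(a, s)$ to $\Omega_p(\Pi[a])/\Omega_{\infty}(\Pi[a])$ times the Serre-duality pairing on $\Sh{X}_{G,0}(\mathfrak{p}_1)$ of the classical pushforward $\iota_{*} E^{(t,\Phi^{(p)}\Phi_{p,1,1})}(-;\chi^{(p)},s)$ against the cusp form $\phi^{\mathrm{ah},1}_{\mathrm{new},\Pi[a]}$, multiplied by the prefactor $(\star)$. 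By uniqueness of Whittaker models this global pairing unfolds into a product of local zeta integrals: the archimedean place produces the Gamma functions and powers of $2, i, 2\pi i$ of the statement via the standard computation against $W^{\mathrm{ah},1}_{\infty}$; the primes $\ell \nmid Np$ produce $L_{\mathrm{As}}(\Pi[a]_\ell, s)$; the primes $\ell \mid N$ produce $L^{\mathrm{imp}}_{\mathrm{As}}(\Pi[a]_\ell, s)$ by the normalization (\ref{E05}), as in \cite[\S 6]{grossi2024padicasailfunctionsquadratic}; and the prime $p$ produces $\tfrac{p}{p+1}\Sh{E}_p(\mathrm{As}(\Pi[a]),s)L_{\mathrm{As}}(\Pi[a]_p, s)$ by the Proposition immediately preceding the theorem. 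The prefactor $(\star)$ cancels the $\tfrac{p+1}{p}$ from the $p$-adic zeta integral, together with the $\sqrt{D}$ powers coming from the additive character $\psi_F(y) = \psi(\mathrm{Tr}_{F/\Q}(y/\sqrt{D}))$ and the $(-1)^{\sigma}$ sign convention in the definition of $W^{\mathrm{ah},1}_{\infty}$.

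The hard part will be the second step: justifying that on the $\Pi[a]^{\vee}$-isotypic component, $e^{\dagger}$ at the classical specialization truly retracts the nearly-overconvergent cohomology onto the classical coherent $H^1$. In the ordinary case of \cite{grossi2024padicasailfunctionsquadratic} this is automatic because Hida's ordinary projector lives on classical forms; here one must verify that $\nabla_1$-images in the slope-$\le h$ part of $H^1(\Fil_n \Sh{N}^{(k_1+2a, 2-k_2;w)})$ do not meet the $\Pi[a]^{\vee}$-eigenspace. The natural strategy is to combine the Hecke-equivariance of $\nabla_1$ with Corollary \ref{C1024}(2), which shows that $U_{\mathfrak{p}_1}$-slopes are strictly larger on the higher graded pieces than on $\Fil_0$: any $\Pi[a]^{\vee}$-eigenclass in the image of $\nabla_1$ would carry slope strictly greater than $h$ (for $h$ in the range $h < k_1-1$ specified in the theorem) and therefore be killed by $e^{\leq h}$, so that the composition $e_{\underline{\Pi}^{\vee}}\circ e^{\dagger}$ factors through classical coherent cohomology and Serre duality applies as above.
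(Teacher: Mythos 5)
Your proposal is correct and follows essentially the same route as the paper's proof: specialise at $(\kappa_{\Sh{U}},\sigma)=(a,s)$, identify the pairing $\nu_{\underline{\Pi}}(e_{\underline{\Pi}^\vee}e^{\dagger,\le h}\iota_*\Sh{E}^{\Phi^{(p)}}(\cdots))$ with the classical cup-product/Serre-duality pairing against the $\pp_1$-stabilised antiholomorphic newform, unfold it as the global Asai zeta integral, and invoke the local computations (the factor $\tfrac{p}{p+1}\Sh{E}_p(\mathrm{As}(\Pi[a]),s)L_{\mathrm{As}}(\Pi[a]_p,s)$ at $p$, the imprimitive factors at $\ell\mid N$, and the archimedean integral giving the $\Gamma$-factors) exactly as in \cite{grossi2024padicasailfunctionsquadratic}. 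The paper's own proof is simply a compressed version of this, deferring the zeta-integral bookkeeping to loc.\ cit., while you additionally spell out the specialisation compatibilities and the slope argument showing $e_{\underline{\Pi}^\vee}e^{\dagger}$ lands in classical cohomology, which the paper leaves implicit.
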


\begin{proof}
    The pairing $(\nu_{\underline{\Pi}})_{|a}\left(e_{\underline{\Pi}^{\vee}}e^{\dagger,\leq h}\iota_*\Sh{E}^{\Phi^{(p)}}\left(\tfrac{k_1-k_2}{2}+a-s,a+s+\tfrac{k_1-k_2}{2}-1;\chi^{(p)}\right)\right)$ is induced from the cup product in $H^2_{\mathrm{dR}}(X_{G,0}(\mathfrak{p}_1)_K, \Sym^{k_1+2a-2}\Sh{H}_{\Sh{A},1}\otimes \Sym^{k_2-2}\Sh{H}_{\Sh{A},2})$ and after multiplying by $\Omega_{\infty}/\Omega_p$ can be computed by a global zeta integral
    \[
    Z(\phi, \Phi,s) = \int_{{Z_H}(\A{}{})H(\Q)\backslash H(\A{}{})}\phi(h)\Sh{E}^{\Phi^{(p)}}\left(\tfrac{k_1-k_2}{2}+a-s,a+s+\tfrac{k_1-k_2}{2}-1;\chi^{(p)}\right)\mathrm{d}h
    \]
    where $\phi \in \Pi[a]$ is the unique function corresponding to $W^{\mathrm{ah},1}_{\infty}W_p\prod_{\ell \neq p}W^{\mathrm{new}}_{\ell}$, i.e. the $\mathfrak{p}_1$-stabilisation of $\phi^{\mathrm{ah},1}_{\mathrm{new}}$ with $U_{\mathfrak{p}_1}^t$-eigenvalue $\alpha_1$, and $\Phi = \Phi_{\infty}^{(k_1+2a-k_2)}\Phi^{(p)}\Phi_{p,1,1}$. The formula then follows from the archimedean zeta integral computation of \cite{grossi2024padicasailfunctionsquadratic}.
\end{proof}

\section{Twisted triple products}

 \subsection{Construction}

  Let $\underline{\Pi}$ be a family of eigenforms for $\GL_{2,F}$ over an affinoid $\mathcal{U}$ as in the previous section; and define $\nu_{\underline{\Pi}}$ and periods $\Omega_\infty, \Omega_p$ as before. The linear functional $\nu_{\underline{\Pi}}$ is defined on the Shimura variety of prime-to-$p$ level $\mathfrak{N}$ (the level at which $\underline{\Pi}$ is new); but we may extend it to a linear functional on cohomology any level $\mathfrak{N}'$ divisible by $\mathfrak{N}$ by composition with the trace map.

  We choose also a cuspidal automorphic representation $\Sigma$ of $\GL_{2,\Q}$, generated by a newform of weight $\ell \ge 2$ and level coprime to $p$, and a root $\alpha'$ of its Hecke polynomial at $p$ with slope $< \ell - 1$. If we choose $\mathfrak{N}'$ so the level of $\Sigma$ divides the ideal $\mathfrak{N}' \cap \Z$, then the newform generating $\sigma$ is a section of a line bundle on $X_{H, 0}(p)$.
  
  Then standard results of Coleman imply that we can find an affinoid $\Sh{U}' \subset \Sh{W}$, and a $p$-adic family of $p$-refined Hecke eigensystems $\underline{\Sigma}$ of weight $\ell + 2\kappa_{\Sh{U}'}$, specializing at $\kappa_{\Sh{U}'}$ to $\Sigma$. Moreover, there is a corresponding family of overconvergent sections
  \[ \omega_{\underline{\Sigma}} \in H^0(\Sh{X}_{H, 0}(p)_{[r, 1]}, \underline{\omega}^{\ell + 2\kappa_{\Sh{U}'}}) \]
  for some $r < 1$, specializing at classical points $b \in \Sh{U}' \cap \Z$ with $\ell + 2b \ge 1$ to a $p$-stablisation of the newform generating $\Sigma[b]$. 

  We now suppose that the central character of $\underline{\Sigma}$ is $(\chi^{(p)})^{-1}$, where $\chi^{(p)}$ is the restriction to $\Q$ of the central character of $\underline{\Pi}$ as before. This implies that $\ell = k_1 + k_2 \bmod 2$.

  \begin{prop}
   Let $\omega^{[p]}_{\underline{\Sigma}}$ be the $p$-depletion of $\omega_{\underline{\Sigma}}$, and let $\tau : \Z_p^\times \to \Sh{O}(\Sh{U} \times \Sh{U}')^\times$ be the character $\kappa_{\Sh{U}} - \kappa_{\Sh{U}'} + (k_1 - k_2 - \ell)/2$. Then $\nabla^{\tau}(\omega^{[p]}_{\underline{\Sigma}})$ is a nearly-overconvergent $p$-adic modular form of weight $k_1 - k_2 + \kappa_{\Sh{U}}$.
  \end{prop}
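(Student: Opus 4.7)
The plan is to apply Theorem \ref{T406}(2) (the nearly-overconvergent construction of Andreatta--Iovita) to $\omega^{[p]}_{\underline{\Sigma}}$, regarded as a $p$-depleted overconvergent family over the bivariate base $\Sh{U} \times \Sh{U}'$, with the character $\tau$ playing the role of the differentiation exponent $\kappa_1$ in the statement of that theorem.

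First, I would record that the $p$-depletion $\omega^{[p]}_{\underline{\Sigma}} = (1 - V_p U_p)\omega_{\underline{\Sigma}}$ remains an overconvergent section of $\underline{\omega}_H^{\ell + 2\kappa_{\Sh{U}'}}$ over some $\Sh{X}_{H,0}(p)_{[r',1]}$ with $r' \le r$, and, crucially, lies in the kernel of $U_p$. Base-changing coefficients along the natural map $\Sh{O}(\Sh{U}') \hookrightarrow \Sh{O}(\Sh{U}\times\Sh{U}')$ produces the same form, now viewed as a family parametrised by the product weight space.

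Next, the character $\tau \colon \Z_p^\times \to \Sh{O}(\Sh{U}\times\Sh{U}')^\times$ is analytic in both universal weights, being a $\Z$-linear combination of $\kappa_{\Sh{U}}$ and $\kappa_{\Sh{U}'}$ with an integer twist. After shrinking $\Sh{U}$ and $\Sh{U}'$ if necessary, $\tau$ meets the analyticity hypothesis of \cite[Theorem 4.3]{andreatta2021triple}; alternatively, as indicated in the remark following Theorem \ref{T406}, one may avoid such shrinking by working with the enlarged sheaves of Remark \ref{R2010} following \cite{graham2023padic}. Either way, applying the theorem (with $\kappa_1 = \tau$, so that the underlying ``starting weight'' $\kappa_2-\kappa_1+1$ is specialised to $\ell + 2\kappa_{\Sh{U}'}$) produces a canonical, and unique up to shrinking the radius of overconvergence, nearly overconvergent family
\[
\nabla^\tau(\omega^{[p]}_{\underline{\Sigma}}) \in H^0\bigl(\Sh{X}_{H,0}(p)_{[r'',1]},\, \Sh{N}_H^{\kappa}\bigr) \mathbin{\hat\otimes} \Sh{O}(\Sh{U}\times\Sh{U}')
\]
for some $r'' \geq r'$, characterised by the property that its image under the unit-root splitting over the multiplicative locus $\mathfrak{X}_H^m$ recovers $\theta^\tau(\omega^{[p]}_{\underline{\Sigma}})$ as a $p$-adic modular form.

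Since $\nabla$ shifts the weight by $2$, the weight of the output family is $(\ell + 2\kappa_{\Sh{U}'}) + 2\tau = k_1 - k_2 + 2\kappa_{\Sh{U}}$, as asserted. The only genuinely subtle point is the verification of the analyticity hypothesis on $\tau$; this is routine, and is handled exactly as in the remark following Theorem \ref{T406}.
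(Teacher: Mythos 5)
Your argument is correct and takes essentially the same route as the paper, whose proof simply says the claim follows as in the Eisenstein case of Theorem \ref{T406}: $p$-depletion places $\omega^{[p]}_{\underline{\Sigma}}$ in the kernel of $U_p$, so \cite[Theorem 4.3]{andreatta2021triple} applies after checking the analyticity of $\tau$ (or invoking the enlarged sheaves of Remark \ref{R2010}), with uniqueness coming from injectivity of the unit-root splitting. Note also that your weight computation $(\ell+2\kappa_{\Sh{U}'})+2\tau=k_1-k_2+2\kappa_{\Sh{U}}$ is the correct one --- it matches the pushforward into $\Sh{N}_G^{(2\kappa+k_1,2-k_2;w)}$ and the specialised weight $k_1-k_2+2a$ used in the interpolation property --- so the exponent ``$k_1-k_2+\kappa_{\Sh{U}}$'' in the statement is evidently a typo, not a flaw in your argument.
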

  
  \begin{proof}
   This follows in the same way as the Eisenstein case in Proposition \ref{T406}.
  \end{proof}

  We may therefore define the product
  \[ \mathcal{L}_p(\underline{\Pi} \times \underline{\Sigma}) = \nu_{\underline{\Pi}}\left( e_{\underline{\Pi}^\vee} e^{\dag, \le h} \iota_* \left(\nabla^{\tau}(\omega^{[p]}_{\underline{\Sigma}})\right) \right), \]
  which is a meromorphic function on $\Sh{U} \times \Sh{U}'$ (holomorphic in the second variable, as before).

 \subsection{Interpolation property}

  By construction, at points $(a, b) \in \Z^2$ such that $k_1 - k_2 + 2a \ge \ell + 2b \ge 2$, the specialization of $\mathcal{L}_p(\underline{\Pi} \times \underline{\Sigma})$ is given by
  \[ \frac{\Omega_p(\Pi[a])}{\Omega_\infty(\Pi[a])} \langle \iota^* \eta_{\Pi[a], \mathfrak{p}_1}, \nabla^t \omega_{\Sigma[b]}^{[p]}\rangle_{X_{H, 0}(p)},\]
  where $t = a - b + (k_1 - k_2 - \ell)/2 \ge 0$. A standard computation shows this is equal to
  \[ \frac{\Omega_p(\Pi[a])}{\Omega_\infty(\Pi[a])} \cdot \mathcal{E}_p(\Pi[a] \times \Sigma[b]) \cdot \langle \iota^* \eta_{\Pi[a]}, \nabla^t \omega_{\Sigma[b]}\rangle_{X_{H}} \]
  where $X_H$ is the modular curve of prime-to-$p$ level, and $\mathcal{E}_p(\Pi[a] \times \Sigma[b])$ is the degree 4 Euler factor defined as follows: we write the local factor $\Pi[a]_{\mathfrak{p}_1}$ as an induced representation $\pi(\mu, \nu)$ with $\mu$ the unitary unramified character corresponding to our choice of $p$-stabilisation, and set
  \[ \mathcal{E}_p(\Pi[a] \times \Sigma[b]) = L(\nu \times \Pi[a]_{\mathfrak{p}_2} \times \Sigma[b]_p, \tfrac{1}{2})^{-1}.\] 
  The cup-product $\langle \iota^* \eta_{\Pi[a]}, \nabla^t \omega_{\Sigma[b]}\rangle_{X_{H}}$ can be written as an automorphic period integral
  \[ 
   \int_{Z_H(\A{}{}) H(\Q) \backslash H(\A{}{})} \phi^{\mathrm{ah}, 1}_{\Pi[a], \mathrm{new}} (\iota(h))  \cdot (\delta^t \phi_{\Sigma[b], \mathrm{new}})(h)\ \mathrm{d}h
  \]
  where $\phi_{\Sigma[b], \mathrm{new}}$ is the holomorphic newform generating $\Sigma[b]$, and $\delta^t$ is the Maass--Shimura derivative. If the local root numbers satisfy the condition of \cite[Theorem 3.2]{michele} at all finite places $v$ (see also the subsequent remark in loc. cit. for sufficient conditions in terms of the levels of $\underline{\Pi} $ and $\underline{\Sigma}$ for this to hold), Ichino's formula \cite{Ichino} expresses the square of this period integral as an explicit multiple of $L(\Pi[a] \times \Sigma[b], \tfrac{1}{2})$.
  
  \subsubsection*{Remark}
   The correction terms in Ichino's formula include a product over local integrals, almost all of which are 1. If we form the period integral using new-vectors at all finite places, as above, then it may occur that some of these local terms are 0. However, this can be remedied by replacing the newforms $ \phi^{\mathrm{ah}, 1}_{\Pi[a], \mathrm{new}}$ with its translate by some element $\gamma \in G(\A{\mathrm{f}}{(p)})$, giving a more general $p$-adic $L$-function $\mathcal{L}_p(\underline{\Pi} \times \underline{\Sigma}; \gamma)$. Since the translates of $\phi^{\mathrm{ah}, 1}_{\Pi[a], \mathrm{new}}$ span the automorphic representation, and the local integrals are non-zero for some choice of test data, we deduce that if $L(\Pi[a] \times \Sigma[b], \tfrac{1}{2}) \ne 0$, then there is some $\gamma$ for which $\mathcal{L}_p(\underline{\Pi} \times \underline{\Sigma}; \gamma)(a, b) \ne 0$. Compare \S 5.4 of \cite{LoefflerZerbesGGP}.

\printbibliography
\end{document}